\newcommand{\bfO}{{\bf O}}
\newcommand{\btheta}{{\bar \theta}}
\newcommand{\RR}{{\mathbb R}}
\newcommand{\WW}{{\mathbb W}}
\newcommand{\ZZ}{{\mathbb Z}}
\newcommand{\CC}{{\mathbb C}}
\newcommand{\mA}{{\mathbb A}}
\newcommand{\FF}{{\mathbb F}}
\newcommand{\GG}{{\mathbb G}}
\newcommand\adots{\mathinner{\mkern2mu\raise1pt\hbox{.}
\mkern3mu\raise4pt\hbox{.}\mkern1mu\raise7pt\hbox{.}}}
\renewcommand{\div}{{\rm div}}
\newtheorem{theo}{Theorem}[section]
\newtheorem{prop}[theo]{Proposition}
\newtheorem{cor}[theo]{Corollary}
\newtheorem{lem}[theo]{Lemma}
\newtheorem{defi}[theo]{Definition}
\newtheorem{ass}[theo]{Assumption}
\newtheorem{assums}[theo]{Assumptions}
\newtheorem{exam}[theo]{Example}
\newtheorem{rem}[theo]{Remark}
\newtheorem{claim}[theo]{Claim}
\numberwithin{equation}{section}
\begin{document}

\title[Multidimensional scalar relaxation shocks]{Asymptotic Behavior of Multidimensional scalar Relaxation Shocks}
\author{Bongsuk Kwon and Kevin Zumbrun}

\date{\today}

\thanks{ This work was supported in part by the National Science Foundation award numbers DMS-0607721 and DMS-0300487.}

\address{Department of Mathematics, Indiana University, Bloomington, IN 47402}
\email{bkwon@indiana.edu}
\address{Department of Mathematics, Indiana University, Bloomington, IN 47402}
\email{kzumbrun@indiana.edu}

\begin{abstract}
We establish pointwise bounds for the Green function and consequent
linearized stability for multidimensional planar relaxation shocks
of general relaxation systems whose equilibrium model is scalar,
under the necessary assumption of spectral stability. Moreover, we
obtain nonlinear $L^{2}$ asymptotic behavior/sharp decay rate of
perturbed weak shocks of general simultaneously symmetrizable
relaxation systems, under small $L^{1}\cap H^{[d/2]+3}$
perturbations with first moment in the normal direction to the
front.

\end{abstract}

\maketitle \thispagestyle{empty}

\tableofcontents

\section{Introduction}

In this paper, we investigate the time-asymptotic stability of
multidimensional planar shocks of general relaxation systems whose
equilibrium model is scalar. \smallbreak Consider {\it hyperbolic
relaxation systems} of general form
\begin{equation}\label{main}
\begin{pmatrix}
u\\
v
\end{pmatrix}_t
+\sum_{j=1}^{d}
\begin{pmatrix}
f^{j}(u,v)\\
g^{j}(u,v)
\end{pmatrix}_{x_j}
=
\begin{pmatrix}
0\\ \tau^{-1}q(u,v)
\end{pmatrix}
\end{equation}
$u, f^{j}\in \RR^{1}$, $v,g^{j},q \in \RR^{r},$ with the condition
\begin{equation}\label{cond}
\text{Re } \sigma(q_v(u,v_{*}(u))<0
\end{equation}
along a smooth {\it equilibrium manifold} defined by
\begin{equation}\label{equi_mfold}
E:=\{(u,v)|q(u,v)\equiv 0\}
\end{equation}
and $\tau$  determines relaxation time. The first equation and the
second $r$ equations represent a conservation law for $u$ and
relaxation rate equations for $v$, respectively. The condition
\eqref{cond} implies that a perturbed solution eventually relaxes to
the equilibrium state.

\smallbreak The first equation for $u$ can be approximated by a
hyperbolic conservation law and a parabolic conservation law to
``zeroth order" and ``first order", respectively. Here, the
corresponding order is with respect to parameter $\tau$ determining
the relaxation time. To ``zeroth order", the corresponding
``relaxed" scalar equation is
\begin{equation}\label{zeroth}
u_t+\sum_{j=1}^{d} f^{j}_*(u)_{x_j}=0,
\end{equation} where $f^{j}_*(u):=f^{j}(u,v_*(u))$, and to ``first order", the corresponding parabolic conservation laws is
\begin{equation}\label{second}
u_t+\sum_{j=1}^{d} f^{j}_*(u)_{x_j}=(b^{jk}_*(u)u_{x_k})_{x_j},
\end{equation} where

\begin{align}\label{bjk}
b^{jk}_*=
\left\{%
\begin{array}{ll}
 - f^{j}_v q_v^{-1} (g^{j}_u-g^{j}_v q_v^{-1} q_u-( f^{j}_u-f^{j}_v q_v^{-1}
q_u)q_v^{-1} q_u) & ,\text{ if } j=k \\
 -\frac{1}{2}\Big(f^{j}_v q_v^{-1}(g^{j}_u-g^{j}_v q_v^{-1}
q_u+(f^{k}_u-f^{k}_v q_v^{-1} q_u) q_v^{-1} q_u)\\
\;\;\;\;\;\;
+f^{k}_v q_v^{-1}(g^{k}_u-g^{k}_v q_v^{-1} q_u+(f^{j}_u-f^{j}_v q_v^{-1} q_u) q_v^{-1} q_u)\Big) & ,\text{ if  } j\neq k \\
\end{array}%
\right.
\end{align}
is determined by the expansion of Fourier symbol(Chapmann-Enskog
expansion) as in Appendix \ref{appA}. \smallbreak These
approximation equations suggest us to investigate the existence of
shock wave solutions. A planar relaxation shock wave is a traveling
wave solution of \eqref{main} satisfying
\begin{equation}\label{profile}
\begin{aligned}
(u,v)(x,t)&=(\bar{u},\bar{v})(x_1-st),\\
\lim_{z\to \pm\infty}(\bar{u},\bar{v})(z)&=(u_\pm,v_\pm),
\end{aligned}
\end{equation}
where the end states $(u_\pm,v_\pm)$ satisfy $v^{*}(u_\pm)=v_\pm$
and $u_\pm$ is a shock solution of \eqref{zeroth}.

\smallbreak Such traveling wave solutions are known to exist for
small amplitude profiles, see for example, \cite{Liu,YoZ,MZ1}.
However, profiles of large amplitude may develop ``subshocks" or
jump discontinuities. We restrict here to the smooth and
small-amplitude case.

\smallbreak Stability of such multidimensional planar shock wave
solutions has been studied for specific models. Nonlinear stability
of planar shock fronts for the $3\times 3$ Jin-Xin model in two
spatial dimension has been proved in \cite{Li}. For a
two-dimensional shallow river model, Ha and Yu proved nonlinear
stability of small amplitude shocks in \cite{HY}. Both of these
analyses follow the approach introduced by Goodman in \cite{Go} to
treat the related scalar viscous case, based on energy estimates and
conservation of mass, yielding sup-norm convergence to the
unperturbed front with no rate. This method has since been greatly
sharpened in the scalar viscous case, using shock-tracking and
spectral (inverse Laplace-transform) methods and pointwise estimates
on the resolvent to obtain asymptotic behavior and sharp rates of
decay for general scalar models; see e.g., \cite{GM,
HoZ1,HoZ2}.\footnote{ Though the analysis of \cite{Go} also
involves an approximate front location, this is determined by a zero
residual mass condition convenient for energy estimates rather than
considerations of asymptotic behavior, and involves errors of the
same magnitude as the perturbation itself; see the discussions in
\cite{GM,HoZ1,HoZ2}. } However, up to now, no comparable result
has been carried out for the relaxation case.

In the present paper, generalizing the results of \cite{HoZ1,HoZ2}
in the viscous case, we prove stability, with asymptotic behavior
and sharp rates of decay, of small-amplitude multidimensional planar
relaxation shocks of $N\times N$ general systems $\eqref{main}$
whose equilibrium model is scalar, under the following assumptions.

\begin{assums}\label{ass:A}{ $ \,$ }
\medbreak
\textup{ (H0) } $f^{j}, g^{j}, q\in \mathcal{C}^{m+1}, m\geq
[d/2]+2$. \medbreak \textup{ (H1) }
(i)$\sigma\big(\sum_j\xi_j(df^{j},dg^{j})^{t}(u,v)\big)$ \textup{
real, semi-si1ultiplicity, for all
$\xi\in\RR^{d}$, and} (ii) $\big(\sigma(df^{1},dg^{1})^{t}(u,v)\big)$
\textup{ different from $s$.} \medbreak \textup{ (H2) }
$\sigma\big(\sum_j\xi_j df^{j}_*(u_\pm)\big) \textup{ real, distinct
and different from $s$.}$ \medbreak \textup{ (H3) }  $\Re\sigma\Big(i \sum_{j=1}^{d}
i\xi_j(df^{j},dg^{j})^{t}(u_{\pm},v_{\pm})-(0,dq)^{t}(u_{\pm},v_{\pm})\Big)\leq
-\theta|\xi|^{2}/(1+|\xi|^{2}) \textup{ for all } \xi \in \RR^{d}$ ,
$\theta>0$. \medbreak \textup{ (H4) } The set of solutions of
\eqref{main} forms a smooth manifold
$(\bar{u}_\delta,\bar{v}_\delta), \delta\in \mathcal{U}\in \RR^{1}$.
\end{assums}

Let $D(\lambda, \tilde \xi)$ as in Definition \ref{def:evans},
Section \ref{evans}, denote the Evans function associated with
Fourier transform $L_{\tilde \xi}$ of the linearized operator about
the wave, a function that is analytic in $\lambda$ for $\Re
\lambda\ge -\theta$, $\theta>0$, with zeros corresponding with
eigenvalue of $L_{\tilde \xi}$.
(For history and further discussion of the Evans function, see
\cite{AGJ, GZ, PZ} and references therein.)

\begin{assums}\label{strongass} (Strong spectral stability conditions)\smallbreak
\textup{ ($\mathcal{D}$1) } $D(\cdot,\tilde{\xi}) \textup{ has no
zeroes in } \{\Re\lambda\geq 0\} \textup{ except at }
\tilde{\xi}=\lambda=0$.
\smallbreak \textup{ ($\mathcal{D}$2) } $(d/d\lambda)D(0,0)\neq 0$.
\smallbreak \textup{ ($\mathcal{D}$3) } \textup{ A zero } $
\lambda_*(\tilde{\xi})$ \textup { of } $D(\cdot,\tilde{\xi})$
\textup{ satisfies } $\lambda_*(0)=0$ \textup{ and } $\Re\lambda_*(\tilde{\xi})\leq -\theta|\tilde{\xi}|^{2}$ \textup{ for }
$|\tilde{\xi}|$ \textup { sufficiently small. (Existence and local
uniqueness of $\lambda_*(\tilde{\xi})$ are guaranteed by the
Implicit Function Theorem and \textup {($\mathcal{D}$2)}.) }
\end{assums}

Sometimes it is more convenient to write \eqref{main} in the
abbreviated form
\begin{equation}\label{abb}
U_t+\sum_{j=1}^{d} A^{j}(U)U_{x_j}=\tau^{-1} Q(U)
\end{equation} where $U=(u,v)^{t}$, $A^{j}(U)=(df^{j},dg^{j})^{t}(u,v)$ and $Q(U)=(0,q(u,v))^{t}$.

\begin{assums}\label{further}{ $ \,$ }
\smallbreak \textup{($\mathcal{A}$1)} \eqref{abb} is symmetrizable
in the sense that there exists $A^{0}$ symmetric, positive definite
such that $A^{0}A^{j}$ are symmetric for all $j=1,2,...,d$ and
$A^{0}dQ$ is symmetric, negative semidefinite. \smallbreak
\textup{($\mathcal{A}$2)}  (Kawashima condition) There exists the
operator $K(\partial_x)$ such that
\begin{equation}\label{kaw0}
\widehat{K(\partial_x)f}(\xi)=i \bar{K}(\xi) \widehat{f}(\xi)
\end{equation}
where $\bar{K}(\xi)$ is a skew-symmetric operator which is smooth
and homogeneous degree one in $\xi$ satisfying

\begin{equation}
\Re\sigma\big(|\xi|^{2}A^{0}dQ-\sum_{j=1}^{d}\xi_j
\bar{K}(\xi)A^{j}\big)_{\pm} \leq -\theta |\xi|^{2} \textup{   for all }\xi \text{ in } \mathbb{R}^{d}\\
\end{equation}
\end{assums}

\begin{rem} 
\textup{
If $\eqref{abb}$ satisfies \textup{($\mathcal{A}$1)} and the
Genuine-Coupling condition that no eigenvector of $\sigma\Big(i
\sum_{j=1}^{d} \xi_j(df^{j},dg^{j})^{t}(u_{\pm},v_{\pm})\Big)$
lies in the kernel of $dQ(u_{\pm},v_{\pm})$, then \textup{($\mathcal{A}$2)} holds
\cite{K,SK,MZ5,Z4,Z5}. Moreover, conditions (A1)--(A2) imply
(H3) \cite{K,SK,Ze}.
}
\end{rem}

Conditions (H0)--(H4), (A1)--(A2) are the standard set of hypotheses
proposed by W.A. Yong for relaxation systems \cite{Yo}, as adapted
to the shock case by Mascia and Zumbrun \cite{MZ1}.  As described
in \cite{MZ1,MZ5}, (A1)--(A2) are satisfied for a wide variety of
relaxation systems, in which case all of (H0)--(H4) are satisfied
for sufficiently small-amplitude profiles under the single condition
(H1)(ii).

\smallbreak Before we state our main theorem, we briefly go over the
idea used by Goodman and Miller \cite{GM}
to give a formal qualitative description of the behavior of the
linear perturbation $ U(x,t):=\tilde{U}(x,t)-\bar{U}(x_1)
$, where $\tilde{U}(x,t)$ is a solution of \eqref{main} and $\bar{U}(x_1)$ is a shock wave solution. 
Linearizing \eqref{main} about $\bar{U}(x_1)$, we obtain the
linearized perturbation equations
\begin{equation}\label{linearized}
U_t=LU:=-\sum_{j=1}^{d}(A^{j}U)_{x_j}+(dQ) U
\end{equation}
where
\begin{equation}
A^{j}:=dF^{j}(\bar{U}(x_1)),\;\;\;\;\;\;\; dQ:=dQ(\bar{U}(x_1))
\end{equation}
depend only on the normal direction $x_1$. \smallbreak We can give a
heuristic approach to describe the behavior of the perturbation
$U=\tilde{U}-\bar{U}$. First, we approximate the operator
$e^{L_{\tilde{\xi}}t}$ by its formal spectrral projection
\begin{equation}
e^{\lambda_*(\tilde{\xi})t} \varphi\langle\tilde{\psi},
\hat{U_0}\rangle,
\end{equation} onto the top
eigenfunction of $L_{\tilde{\xi}}$, assuming a perturbation
expansion
\begin{align}\label{series1}
\lambda_*(\tilde \xi)&= \tilde{\gamma}^{1}\cdot \tilde{\xi} +
\tilde{\xi}^{t} \tilde{\gamma}^{2} \tilde{\xi} +\dots\\
&= i\tilde{\alpha}\cdot \tilde{\xi} -
\tilde{\xi}^{t} \tilde{\beta} \tilde{\xi} +\dots\nonumber
\end{align}
of the corresponding eigenvalue $\lambda_*$. Next we apply the
method of stationary phase to the inverse Fourier transform to
obtain the approximation

\begin{align}\label{sp}
U(x,t)&=e^{Lt}U_0= \frac{1}{(2\pi)^{d-1}}\int_{\mathbb{R}^{d-1}}
e^{L_{\tilde{\xi}}t}
e^{i\tilde{\xi}\cdot\tilde{x}}\hat{U_0} d\tilde{\xi}\\
&\sim \frac{1}{(2\pi)^{d-1}}\int_{\mathbb{R}^{d-1}}
 e^{\lambda_*(\tilde{\xi})t}
\varphi\langle\tilde{\psi}, \hat{U_0}\rangle
e^{i\tilde{\xi}\cdot\tilde{x}}d\tilde{\xi}\nonumber\\
&\sim -\frac{\bar U'(x_1)}{(2\pi)^{d-1}}\int_{\mathbb{R}^{d-1}}
 e^{i\tilde{\xi}\cdot\tilde{x}} e^{( i\tilde{\alpha}\cdot \tilde{\xi} -
\tilde{\xi}^{t} \tilde{\beta} \tilde{\xi})t} \hat{\delta}_0(\tilde{\xi}) d\tilde{\xi}\nonumber\\
&\sim -\bar{U}'(x_1)\delta(\tilde{x},t)\nonumber
\end{align}
where $\delta(\tilde{x},t)$ satisfies the transverse
convection-diffusion equation
\begin{equation}\label{conv-diff}
\delta_t+\tilde{\alpha}\cdot \nabla_{\tilde{x}}
\delta=\div_{\tilde{x}}(\tilde{\beta}\nabla_{\tilde{x}}\delta)
\end{equation} with initial data
\begin{equation}\label{conv-data}
\delta_0(\tilde{x})= \langle \tilde{\psi}, U_0(x)
\rangle_{L^{2}(x_1)} =-([u]^{-1},0) \int U_0 (x) dx_1.
\end{equation}

\smallbreak The following theorem shows that the formal linear
approximation $U(x,t)\sim -\delta(\tilde{x},t) \bar{U}'(x_1)$ is in
fact valid at the nonlinear level.

\begin{theo}\label{mainthm}
For fixed $U_-$, let $\bar U$ be a relaxation shock profile
\eqref{profile} satisfying (H0)--(H5), (A1)--(A2),
($\mathcal{D}$1)--($\mathcal{D}$3), with amplitude $|U_+-U_-|$
sufficiently small. If $| \tilde{U}_0-\bar{U}|_{L^{1}}$,$|
\tilde{U}_0-\bar{U}|_{L^{2}}$,
 $| x_1(\tilde{U}_0-\bar{U})|_{L^{1}}$, $| \tilde{U}_0-\bar{U}|_{H^{[d/2]+3}}
 \leq
\zeta_0$ sufficiently small, then for arbitrary small $\sigma>0$,
there holds
\begin{equation}\label{resultmain}
\big|\tilde{U}(x,t)-\bar{U}(x_1-\delta(\tilde{x},t))\big|
_{L^{2}(x)}\leq C\zeta_0(1+t)^{-(d-1)/4-1/2+\sigma}
\end{equation}
for dimensions $d\ge 2$, with $\delta$ as defined in
\eqref{conv-diff}--\eqref{conv-data} and $\tilde{\alpha}$, $\tilde{
\beta}$ as in \eqref{series1}. Moreover, the above result holds
with $\sigma=0$ for dimensions $d\geq3$.
\end{theo}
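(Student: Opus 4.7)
The plan is to follow the Green-function/shock-tracking approach developed by Howard and Zumbrun \cite{HoZ1,HoZ2} for the scalar viscous case, adapting it to the hyperbolic--parabolic character of \eqref{main} via the Kawashima structure from $(\mathcal{A}1)$--$(\mathcal{A}2)$. The first step is to establish sharp pointwise bounds on the Green function $G(x,t;y)$ of the linearized operator $L$ of \eqref{linearized}. Using $(\mathcal{D}1)$--$(\mathcal{D}3)$ to isolate the simple zero $\lambda_*(\tilde\xi)$ of $D(\cdot,\tilde\xi)$ near $\tilde\xi=0$, one performs an inverse Laplace--Fourier representation and splits
\begin{equation*}
G(x,t;y) = E(x,t;y) + \tilde G(x,t;y),
\end{equation*}
where the low-frequency excited part is
\begin{equation*}
E(x,t;y) \sim -\bar U'(x_1)\, K(\tilde x-\tilde y,\, t)\, \tilde\psi(y_1),
\end{equation*}
$K$ being the fundamental solution of \eqref{conv-diff}. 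The residual $\tilde G$ is a factor $(1+t)^{-1/2}$ smaller than $E$ in $L^2$, the gain coming from the next-order Taylor expansion of the top eigenprojection at $\tilde\xi=0$ and from the Kawashima--type dissipation estimate on the remaining contour enforced by $(\mathcal{A}2)$ and $(\mathrm{H}3)$.

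Next, with $\delta$ defined as the solution of \eqref{conv-diff}--\eqref{conv-data}, set
\begin{equation*}
V(x,t) := \tilde U(x,t) - \bar U\bigl(x_1 - \delta(\tilde x,t)\bigr).
\end{equation*}
Substituting into \eqref{abb} and Taylor expanding in $\delta$ yields
\begin{equation*}
V_t - LV = \bar U'(x_1)\bigl(\delta_t + \tilde\alpha\cdot\nabla_{\tilde x}\delta - \nabla_{\tilde x}\cdot(\tilde\beta\nabla_{\tilde x}\delta)\bigr) + \mathcal{N},
\end{equation*}
in which $\mathcal{N}=\mathcal{N}(V,\delta,\nabla_{\tilde x}\delta)$ is quadratic in $V$ and in $\nabla_{\tilde x}\delta$, and the first group vanishes by the choice of $\delta$. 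Duhamel's principle gives
\begin{equation*}
V(x,t) = \int G(x,t;y) V_0(y)\,dy + \int_0^t\!\!\int G(x,t-s;y)\,\mathcal{N}(y,s)\,dy\,ds.
\end{equation*}
The definition of $\delta_0$ in \eqref{conv-data} is precisely chosen so that the projection $\langle\tilde\psi(\cdot), V_0(\cdot,\tilde y)\rangle_{L^2(x_1)}$ vanishes identically, which together with the first-moment hypothesis $|x_1(\tilde U_0-\bar U)|_{L^1}\le\zeta_0$ allows one to estimate the $E$-contribution of the initial data at the improved rate $(1+t)^{-(d-1)/4-1/2+\sigma}$ via a transverse mean-zero cancellation.

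The core of the proof is then a bootstrap in
\begin{equation*}
\zeta(t) := \sup_{0\le s \le t}(1+s)^{(d-1)/4+1/2-\sigma}\,|V(\cdot,s)|_{L^2(x)},
\end{equation*}
supplemented by control of $|\delta|_{L^\infty}$ and $|\nabla_{\tilde x}\delta|_{L^2}$ at the standard transverse convection-diffusion rates and by uniform boundedness of $|V|_{H^{[d/2]+3}}$. The high-regularity bound comes from a Kawashima-type energy estimate built from the compensator $K(\partial_x)$ of $(\mathcal{A}2)$ together with $(\mathcal{A}1)$, giving an inequality of the form $\partial_t|V|_{H^s}^2 \le -\theta|V|_{H^s}^2 + C\bigl(|V|_{L^2}^2 + |\nabla_{\tilde x}\delta|_{L^2}^2 + \text{n.l.}\bigr)$; Sobolev embedding then upgrades this to $L^\infty$ control of $V$, crucial for bounding $|\mathcal N|_{L^1\cap L^2}$ by $|V|_{L^\infty}|V|_{L^2} + |\nabla_{\tilde x}\delta|_{L^\infty}|\nabla_{\tilde x}\delta|_{L^2} + \cdots$ in a form that closes $\zeta(t) \le C\zeta_0 + C\zeta(t)^2$.

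The principal technical obstacles I anticipate are twofold. First, obtaining the low-frequency description of $G$ with a remainder gaining the full $(1+t)^{-1/2}$ factor in $L^2$: this requires an expansion of the top eigenvalue and eigenprojection of $L_{\tilde\xi}$ through second order in $\tilde\xi$ together with pointwise bounds on the complementary resolvent, combining the Evans-function framework of \cite{AGJ,GZ,PZ} with the hyperbolic--parabolic resolvent estimates appropriate to relaxation systems. Second, the time convolution driving the nonlinear iteration,
\begin{equation*}
\int_0^t (1+t-s)^{-(d-1)/4-1/2}(1+s)^{-2\bigl((d-1)/4+1/2\bigr)+2\sigma}\,ds,
\end{equation*}
is only borderline integrable when $d=2$, forcing the cosmetic loss $(1+t)^\sigma$; for $d\ge 3$ the integral is uniformly bounded and one recovers $\sigma=0$. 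A further delicate point is that the $H^{[d/2]+3}$ energy estimate must be closed without benefit of any time decay, so the Kawashima compensator plus the structural conditions $(\mathcal{A}1)$--$(\mathcal{A}2)$ are used essentially here.
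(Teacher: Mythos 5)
Your overall strategy (Green-function decomposition, shock tracking via $\delta$, Kawashima/compensator energy estimates at high frequency, bootstrap on $\zeta(t)$) matches the paper's. But there are two genuine gaps.

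First, the claim that substituting $\bar U(x_1-\delta(\tilde x,t))$ and Taylor expanding yields a residual
$\bar U'(x_1)\bigl(\delta_t+\tilde\alpha\cdot\nabla_{\tilde x}\delta-\nabla_{\tilde x}\cdot(\tilde\beta\nabla_{\tilde x}\delta)\bigr)$
``which vanishes by the choice of $\delta$'' is false. The exact linear-in-$\delta$ source produced by the ansatz is $(\partial_t-L)\bigl(\delta\,\bar U'\bigr)=-\delta_t\bar U'-\sum_{j\neq 1}\delta_{x_j}\bar A^j\bar U'$ (up to quadratic terms): the transverse convection enters through the full matrices $\bar A^j(\bar U(x_1))$ acting on $\bar U'$, which is not a scalar multiple $\tilde\alpha_j\bar U'$, and no diffusion term appears at all at the PDE level --- the effective diffusion $\tilde\beta$ only emerges from the second-order spectral expansion of $L_{\tilde\xi}$. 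So this source term does not vanish; it is $O(\zeta_0(1+t)^{-(d-1)/4-1/2})$ in $L^2$ and, fed naively into Duhamel, destroys the decay rate. The paper removes it instead by the integration-by-parts identity
\begin{equation*}
\int_0^t\!\!\int G(x,t-s;y)(\partial_s-L_y)\bigl(\delta\,\bar U'\bigr)(y,s)\,dy\,ds
=\delta\,\bar U'(x,t)-\int G(x,t;y)\bigl(\delta\,\bar U'\bigr)(y,0)\,dy,
\end{equation*}
and then cancels the boundary terms against the excited part of $\int G(x,t;y)U_0(y)\,dy$ using the pointwise low-frequency description of $G$; the role of the convection--diffusion equation for $\delta$ is to make this cancellation exact to leading order, not to annihilate the source pointwise. (A related technical point you omit: the paper works with a mollified $\delta^{\varepsilon}$ so that $R_{x_1}$ and the energy estimates make sense, and separately shows $|\delta^\varepsilon-\delta|_{L^2}\lesssim\zeta_0 t^{-(d-1)/4-1/2}$.)

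Second, your $\mathcal N$ is described generically as quadratic, but the relaxation source $q$ is not in divergence form, so the quadratic remainder contains a zeroth-order term $N^0=O(|U|^2)$ carrying no derivative. Convolving $G^{I}$ against an underived $L^1$ quadratic term gives only $\int_0^t(1+t-s)^{-\frac{d-1}{4}}(1+s)^{-\frac{d-1}{2}-1+2\sigma}ds$, which loses the factor $(1+t)^{-1/2}$ and fails to close the bootstrap at the claimed rate (already for $d=2$). The argument only closes because $N^0$ enters multiplied by $(0,I_r)^t$, i.e.\ only in the relaxation components, and because $G^{I}(x,t;y)(0,I_r)^t$ obeys the same improved $t^{-1/2}$-gained bound as $\partial_{y}G^{I}$ (this is Remark \ref{derivrmk} and estimate \eqref{fastpart} in the paper, traceable to the structure of the slow mode $\tilde\psi_{r+1}=(1,0)+O(|\lambda|+|\tilde\xi|)$). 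This use of the relaxation structure is essential and must be made explicit; without it the stated decay rate is not obtained.
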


\subsection{Discussion and open problems}\label{discussion}

Previous results \cite{Li,HY} on multidimensional scalar relaxation
fronts rely on the structure of specific models and give only
stability without rates or behavior. Theorem \ref{mainthm} by
contrast applies to general equations, giving sharp decay rates and
a detailed picture of asymptotic behavior. On the other hand, it
relies on the assumption of spectral stability, which must be
verified.

We conjecture that conditions ($\mathcal{D}$1)--($\mathcal{D}$3)
might be verified by a singular perturbation argument like that of
\cite{PZ} for the one-dimensional (system) case. Verification by
this or other means is an important open problem. Another
interesting open problem would be to remove the restrictive
hypothesis (H1)(ii) as discussed in \cite{MZ5,MZ6}.

The main obstruction to the application to the relaxation problem of
the spectral techniques of \cite{GM,HoZ1,HoZ2} is to treat the
more singular high-frequency behavior associated with the hyperbolic
nature of the equations. In the viscous case, the linearized
operator about the wave is sectorial, generating an analytic
semigroup, and high-frequency contributions are essentially
negligible. In the relaxation case, the linearized operator
generates a $C^0$ semigroup, and there is substantial high-frequency
contribution. We conjecture that it is this difficulty that has so
far prevented application of these techniques, despite their
advantages of generality and detailed information on asymptotic
behavior.

This difficulty was overcome in the one-dimensional analysis of
\cite{MZ1} by direct calculation/detailed asymptotic expansion.
However, this would appear quite complicated to carry out in the
multi-dimensional case. Here, we follow instead a simplified version
of an approach suggested in \cite{Z4} in the context of
hyperbolic--parabolic systems, based on high-frequency energy
estimates. This is quite general, and should find application to
other problems with delicate high-frequency behavior; in particular,
it applies with little modification to the case of relaxation
equations whose equibrium models are systems, preparing the way for
a treatment of multidimensional shock fronts in this case, another
important open problem. We regard this method of treating high
frequencies as perhaps the main new contribution of this paper.

A point that should be mentioned is that the analysis of
\cite{HoZ1,HoZ2} was for arbitrary amplitude shock waves, whereas
the present analysis is limited to the small-amplitude case. The
reasons for this are two. First, there is a limitation already at
the level of the existence problem, since subshocks may form for
too-large amplitudes in general \cite{Liu}. However, supposing
existence of a sufficiently smooth shock profile, we face a
technical problem in carrying out the energy estimates of Section
\ref{sec:damping} in the presence of characteristics moving both to
the left and to the right.  This was overcome with great difficulty
in the one-dimensional case in \cite{MZ5}; we do not know how or
whether this is possible in multi-dimensions.


As a final open problem, we mention the treatment for relaxation
systems of existence and stability of relaxation shock layers in the
small-relaxation time limit, analogous to the small-viscosity
analysis of \cite{GMWZ} in the hyperbolic--parabolic case.

\section{Preliminaries}

We begin with a series of preparatory steps, loosely following
\cite{MZ1, Z4}.

\subsection{Spectral resolution formulae} We derive the spectral resolution formula by proving that the linearized operator about the wave generates $C^{0}$ semigroup.

Linearizing \eqref{main} about the wave $\bar{U}(x_1)$, we have the
linearized equations
\begin{equation}\label{le}
U_t=LU:=-\sum_j (\bar{A}^{j} U)_{x_j} +d\bar{Q}U,\;\;\;\;\;\;
U(0)=U_0,
\end{equation}
where $d\bar{Q}=dQ(\bar{U}(x_1)),\;\;\;
\bar{A}^{j}=dF^{j}(\bar{U}(x_1))$. Taking the Fourier transform in
the transverse directions $\tilde{x}:=(x_2,...,x_d)$, we reduce to a
family of partial differential equations (PDE)
\begin{equation}
\hat{U}_t=L_{\tilde{\xi}}\hat{U}:=-(\bar{A}^{1}U)'-i\sum_{j=2}^{d}
\xi_j\bar{A}^{j} \hat{U} +d\bar{Q}\hat{U},\;\;\;
\hat{U}(0)=\hat{U}_0
\end{equation}
in $(x_1,t)$ indexed by frequency $\tilde{\xi} \in \RR^{d-1}$, where
$\hat{U}=\hat{U}(x_1,\tilde{\xi},t)$ denotes the Fourier transform
of $U=U(x,t)$ in $\tilde{x}$ and $``\; '\; "$ denotes $d/dx_1$.
Taking the Laplace transform in $t$, we reduce to the resolvent
equation
\begin{equation}\label{resolvent}
(\lambda-L_{\tilde{\xi}})\hat{\hat{U}}=\hat{U}_0
\end{equation}
where $\hat{\hat{U}}(x_1,\tilde{\xi},\lambda)$ denotes the
Laplace-Fourier transform of $U=U(x,t)$.

\begin{defi} \textup{ (a) The Green function $G(x,t;y)$ associated with the linearized equations $\eqref{le}$ is defined by}\\
\;\;\;\;\;\;\;\;\;\;\;\;\;\; \textup{ (i) } $(\partial_t-L_{\tilde{\xi}})G=0$ \textup{ in the distributional sense, for all $t>0$} and,\\
\;\;\;\;\;\;\;\;\;\;\;\;\;\; \textup{ (ii) }
$G(x,t;y)\rightharpoondown \delta(x-y)$ \textup{ as } $t\rightarrow
0$. \smallbreak \textup{ (b) } \textup{ The resolvent kernel
$G_{\lambda,\tilde{\xi}}(x_1,y_1)$ associated with the resolvent
equation $\eqref{resolvent}$ is defined as a distributional solution
of 
\begin{equation}
(\lambda-L_{\tilde{\xi}})G_{\lambda,\tilde{\xi}}(x_1,y_1)=\delta(x-y).
\end{equation}
Formally, one can write
\begin{equation}
G(x,t;y):=e^{Lt}\delta(x-y)
\end{equation} and
\begin{equation}
G_{\lambda,\tilde{\xi}}(x_1,y_1):=(\lambda-L_{\tilde{\xi}})^{-1}\delta(x_1-y_1)
\end{equation}
}
\end{defi}

\begin{prop}[\cite{MZ1, Z4}]
Under assumptions (H0)--(H4), (A1)--(A2), $L$ generates a
$\mathcal{C}^{0}$ semigroup $|e^{Lt}|\leq C e^{\eta_0 t}$ on $L^{2}$
with domain $\mathcal{D}(L):=\{U: U, LU \in L^{2}\}$, satisfying the
generalized spectral resolution formula, for some $\eta>\eta_0$,

\begin{equation}\label{spectral}
G(x,t;y)=
\frac{1}{(2\pi i)^{d}}
\textup{ P.V. } \int_{\eta-i\infty}^{\eta+i\infty}
\int_{\RR^{d-1}} e^{i\tilde{\xi}\cdot \tilde{x}+\lambda t}
G_{\lambda,\tilde{\xi}} (x_1,y_1) d\tilde{\xi} d\lambda,
\end{equation}
\end{prop}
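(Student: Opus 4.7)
The plan is to prove the two halves of the proposition in sequence: first semigroup generation with the stated exponential bound, then the spectral resolution formula. For generation, I would apply the Lumer--Phillips theorem to $L$ equipped with the inner product induced by the Friedrichs symmetrizer supplied by $(\mathcal{A}1)$. Define the equivalent norm $\|U\|_*^2:=\int U^T A^0(\bar U(x_1))U\,dx$, which is comparable to $\|U\|_{L^2}^2$ because $A^0$ is symmetric positive definite and depends smoothly on $\bar U$, which is bounded. I would then check dissipativity of $L-\eta_0 I$ for $\eta_0$ large enough, and density of the range of $\eta_0 I - L$ in $L^2$.

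For the dissipative estimate, formally differentiate along solutions of $U_t=LU$:
\begin{equation*}
\tfrac{d}{dt}\|U\|_*^2
= -2\sum_j \langle U, A^0(\bar A^j U)_{x_j}\rangle + 2\langle U, A^0 d\bar Q\, U\rangle.
\end{equation*}
The symmetry of $A^0\bar A^j$ allows integration by parts, producing a commutator involving $\partial_{x_1}(A^0\bar A^1)$ which is bounded because $\bar U$ and its derivative are bounded by the profile estimates. The relaxation term is controlled using negative semidefiniteness of $A^0\, d\bar Q$. This yields $\tfrac{d}{dt}\|U\|_*^2\le C\|U\|_*^2$, hence $L-\eta_0 I$ is dissipative in the $\|\cdot\|_*$ norm for $\eta_0:=C/2$. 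For range density, I would regularize by replacing $L$ by $L_\varepsilon:=L+\varepsilon\Delta$, which is sectorial and has full range, solve $(\eta_0 I-L_\varepsilon)U_\varepsilon=f$ for a dense set of $f$, and take $\varepsilon\to 0^+$ using the uniform energy bound together with (A2)/Kawashima-type estimates to extract an $L^2$ weak limit solving $(\eta_0 I-L)U=f$. Lumer--Phillips then yields a $C^0$ semigroup with $\|e^{Lt}\|_*\le e^{\eta_0 t}$, and hence $\|e^{Lt}\|_{L^2}\le Ce^{\eta_0 t}$.

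For the spectral resolution formula, I would invoke the standard inverse Laplace transform representation valid for $C^0$ semigroups: for $\eta>\eta_0$ and $U_0\in\mathcal{D}(L^k)$ with $k$ sufficiently large,
\begin{equation*}
e^{Lt}U_0=\frac{1}{2\pi i}\,\text{P.V.}\int_{\eta-i\infty}^{\eta+i\infty} e^{\lambda t}(\lambda-L)^{-1}U_0\,d\lambda,
\end{equation*}
where the principal value is needed because $L$ is not sectorial (no analyticity). Since $\bar U$ depends only on $x_1$, the transverse Fourier transform $\mathcal{F}_{\tilde x}$ commutes with $L$, giving $(\lambda-L)^{-1}=\mathcal{F}_{\tilde x}^{-1}(\lambda-L_{\tilde\xi})^{-1}\mathcal{F}_{\tilde x}$. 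The kernel of $(\lambda-L_{\tilde\xi})^{-1}$ is by definition $G_{\lambda,\tilde\xi}(x_1,y_1)$, and combining the two integral transforms (Bromwich contour in $\lambda$, inverse Fourier in $\tilde\xi$) produces the stated double-integral representation of $G(x,t;y)$. Density of the class of admissible $U_0$ extends the formula distributionally to $\delta(x-y)$.

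The main obstacle will be the subtle conditional convergence of the Bromwich integral for a merely $C^0$ (non-analytic) semigroup: the resolvent $(\lambda-L)^{-1}$ does not decay sufficiently as $|\Im\lambda|\to\infty$ to make the integrand absolutely integrable. This is why the formula must be stated as a principal value and why sufficient smoothness of $U_0$ is required to justify termwise manipulation. Resolving this requires either integrating by parts in $\lambda$ using $\lambda(\lambda-L)^{-1}=I+L(\lambda-L)^{-1}$ to trade temporal regularity for decay in $\Im\lambda$, or working on $\mathcal{D}(L^k)$ and then passing to the distributional limit; both approaches are now standard, and I would follow the treatments in \cite{MZ1,Z4}.
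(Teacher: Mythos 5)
Your proposal is correct and follows essentially the same route as the paper: a symmetrizer-based energy estimate (your dissipativity bound in the $A^0$-weighted norm is the quantitative content of the paper's resolvent inequality \eqref{garding}), a standard generation theorem (Lumer--Phillips in your case, Hille--Yosida via Henry's result in the paper's, which are interchangeable here), and then the inverse Laplace--Fourier representation from Pazy, using that $L$ commutes with the transverse Fourier transform because $\bar U$ depends only on $x_1$. The paper simply cites \cite{MZ1} and \cite{Pa} for the steps you spell out (range density, conditional convergence of the Bromwich integral), so no substantive difference in approach.
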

\begin{proof}
Performing an elementary energy estimate, under the assumption of
symmetrizability of \eqref{abb}, we establish that
\begin{equation}\label{garding}
|A^{0}U|_{L^{2}}\leq
|\lambda-\lambda_*|^{-1}|A^{0}(L-\lambda)U|_{L^{2}}
\end{equation}
for a symmetrizer $A^{0}$ and all $U \in \mathcal{D}(L)$ and real
$\lambda$ greater than some value $\lambda_*$. If, in addition,
$A^{j}$ and $Q$ are asymptotically constant as
$x_1\rightarrow\pm\infty$, then it is shown that $L$ generates a
$C^{0}$ semigroup $e^{Lt}$ on $L^{2}$, satisfying
$|e^{Lt}|_{L^{2}}\leq C e^{\omega t}$ for some real $\omega$
\cite{MZ1}. This is done by \eqref{garding} and a standard result
of Henry. Therefore, by \cite{Pa}, p.1., the inverse Laplace-Fourier
Transform formula holds for $L, e^{Lt}$:
\begin{equation}
e^{Lt}f=
\frac{1}{(2\pi i)^{d}}
\textup{ P.V. } \int_{\eta-i\infty}^{\eta+i\infty}
\int_{\RR^{d-1}} e^{i\tilde{\xi}\cdot \tilde{x}+\lambda t}
(L_{\tilde{\xi}}-\lambda)^{-1} d\tilde{\xi} d\lambda,
\end{equation}
As a consequence, \eqref{spectral} holds in the distribution sense.
\end{proof}
\subsection{The Evans function}\label{evans}
Consider the homogeneous eigenvalue equation
\begin{equation}\label{ev}
(\lambda-L_{\tilde{\xi}})W=\\
(\lambda+i\sum_{j=2}^{d}\xi_jA^{j}-dQ)W+(A^{1}W)'=0
\end{equation}
and its constant-coefficient limits as $x_1\to \pm \infty$,
\begin{equation}\label{const1}
(L_{\pm,\tilde{\xi}}-\lambda)W=
(dQ_{\pm}-A_{\tilde{\xi},\pm}-\lambda)W-(A^{1}_{\pm}W)'=0
\end{equation}
or, alternatively,
\begin{equation}\label{const2}
W'= (A^{1}_{\pm})^{-1} \Big(dQ_{\pm}-\sum_{j=2}^d i\xi_j
A_{j,\pm}-\lambda\Big) W.
\end{equation}

\begin{defi}\label{consplitdef}
The domain of consistent splitting $\Lambda$ is defined as the
connected component of $(\lambda, \tilde \xi)\in \CC\times \RR^{d-1}$
containing $\tilde \xi=0$ and $\lambda$ going to real $+\infty$, for
which the coefficients
\begin{equation}\label{coeff1}
(A^{1}_{\pm})^{-1} \Big(dQ_{\pm}-\sum_{j=2}^d i\xi_j
A_{j,\pm}-\lambda\Big)
\end{equation}
in \eqref{const2} have $k$ eigenvalues of negative real part and
$N-k$ eigenvalues of positive real part, with no pure imaginary
eigenvalues.
\end{defi}

\begin{lem}
Under assumptions \textup{ (H0)-(H1), (H3)},
\begin{equation}\label{Lambdabd}
\Lambda \subset \{(\lambda, \tilde \xi):\,
 \Re \lambda > -\theta|\tilde \xi|^{2}/(1+|\tilde \xi|^{2}) .
\end{equation}
In particular, for $|(\lambda, \tilde \xi)|\ge r>0$, arbitrary,
$\Lambda\subset \{\lambda:\, \Re \lambda \ge -\eta\}\times
\RR^{d-1}$, where $\eta(r):=\theta r^2>0$.
\end{lem}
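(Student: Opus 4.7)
The plan is to reduce the imaginary-eigenvalue condition for the coefficient matrix in \eqref{const2} to a spectral condition on the constant-coefficient Fourier symbol at infinity, invoke (H3), and close with a standard connectivity argument.

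First, I would observe that $(A^1_\pm)^{-1}\bigl(dQ_\pm - \sum_{j\ge 2}i\xi_j A^j_\pm - \lambda\bigr)$ admits a pure imaginary eigenvalue $i\xi_1$ with $\xi_1\in\RR$ if and only if there exists $W\ne 0$ with
\[
\lambda W = \Bigl(-i\sum_{j=1}^{d}\xi_j A^{j}_\pm + dQ_\pm\Bigr) W,
\]
equivalently $\lambda \in \sigma\bigl(-i\sum_{j=1}^{d}\xi_j A^{j}_\pm + dQ_\pm\bigr)$ for some $\xi_1\in\RR$, writing $\xi=(\xi_1,\tilde\xi)\in\RR^{d}$. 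Thus the ``bad set'' $\mathcal{B}$ of pairs $(\lambda,\tilde\xi)$ at which \eqref{coeff1} fails to be hyperbolic is precisely the union over $\xi_1\in\RR$ of these spectral curves (paired trivially with $\tilde\xi$).

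Next, I would apply (H3) directly, giving $\Re\sigma\bigl(-i\sum_j\xi_j A^{j}_\pm + dQ_\pm\bigr) \le -\theta|\xi|^{2}/(1+|\xi|^{2})$. Since $|\xi|^{2}=\xi_1^{2}+|\tilde\xi|^{2}\ge|\tilde\xi|^{2}$ and $x\mapsto x/(1+x)$ is monotone increasing on $[0,\infty)$, any $(\lambda,\tilde\xi)\in\mathcal{B}$ must satisfy $\Re\lambda\le -\theta|\tilde\xi|^{2}/(1+|\tilde\xi|^{2})$. In other words, $\mathcal{B}$ sits entirely in the closed subgraph of the function $\tilde\xi\mapsto-\theta|\tilde\xi|^{2}/(1+|\tilde\xi|^{2})$.

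Finally, I would close with a standard connectivity argument. The open supergraph $\Omega:=\{(\lambda,\tilde\xi):\Re\lambda>-\theta|\tilde\xi|^{2}/(1+|\tilde\xi|^{2})\}$ is path-connected (it is the open supergraph of a continuous function of $\tilde\xi$), contains the base point ($\tilde\xi=0$, $\lambda\to+\infty$ real), and is disjoint from $\mathcal{B}$ by the previous step. The coefficient matrix therefore has purely hyperbolic spectrum throughout $\Omega$, so its eigenvalue counts (negative vs.\ positive real part) are continuous and hence constant on $\Omega$; at the base point they equal $(k,N-k)$ by (H2), since large real $\lambda$ makes the matrix $\sim -\lambda(A^{1}_\pm)^{-1}$, whose splitting is determined by the signature of $A^{1}_\pm$. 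Hence $\Omega$ is contained in the good-splitting set, is connected, and contains the base point, so it sits inside $\Lambda$, which yields the inclusion of the lemma. For the ``in particular'' statement one case-splits on $|(\lambda,\tilde\xi)|\ge r$: either $|\tilde\xi|\ge r/\sqrt{2}$, giving the bound directly from the main claim, or $|\lambda|\ge r/\sqrt{2}$, in which case points on $\mathcal{B}$ force $|\xi|\gtrsim r$ (the spectrum of $-i\sum\xi_j A^{j}_\pm + dQ_\pm$ grows only linearly in $|\xi|$), yielding the same $O(\theta r^{2})$ bound. The only care required is the topological check that $\Omega$ is connected and that the base-point splitting is $(k,N-k)$; both are routine, so the main ``obstacle'' here is really just bookkeeping rather than a genuine difficulty.
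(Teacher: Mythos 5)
Your argument is correct and follows essentially the same route as the paper's: identify pure imaginary eigenvalues $\mu=i\xi_1$ of \eqref{coeff1} with points of the dispersion relation, invoke (H3) together with monotonicity in $\xi_1$ to exclude them on the open supergraph $\{\Re\lambda>-\theta|\tilde\xi|^2/(1+|\tilde\xi|^2)\}$, and close by a homotopy/connectedness argument back to the base point $\lambda\to+\infty$. (Note that, exactly as in the paper's own proof, what this establishes is that the supergraph is contained in the domain of consistent splitting; the inclusion as typeset in the lemma is reversed, but the substance and its downstream use are the same.)
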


\begin{proof}
Noting that eigenvalues $\mu(\lambda, \tilde \xi)$ of coefficient
\eqref{coeff1} relate to solutions of the dispersion-relation
$$
\lambda(\xi)\in \sigma \Big(dQ_{\pm}-\sum_{j=1}^d i\xi_j
A_{j,\pm}-\lambda\Big)
$$
by the relation $\mu=i\xi_1$, we find by Assumption (H3) that the
coefficient has no pure imaginary eigenvalues when $ \Re \lambda >
-\theta|\xi|^{2}/(1+|\xi|^{2})  $ for $\xi=(\xi_1, \tilde \xi)$,
all $\xi_1\in \RR$, or equivalently $ \Re \lambda > -\theta|\tilde
\xi|^{2}/(1+|\tilde \xi|^{2})  $. A straightforward homotopy
argument taking $\lambda$ to real plus infinity then gives the
result; see \cite{MZ1,Z3,Z4}.
\end{proof}

\begin{prop}\label{2.3}
Under assumptions \textup{ (H0)-(H1), (H3)}, for $(\lambda, \tilde
\xi)$ in the domain of consistent splitting $\Lambda$, there are
solutions of $\eqref{ev}$
\begin{equation}
\{\varphi^{+}_1(x_1;\lambda,\tilde{\xi}),\cdot\cdot\cdot,\varphi^{+}_k(x_1;\lambda,\tilde{\xi})\}\end{equation}
and
\begin{equation}
\{\varphi^{-}_{k+1}(x_1;\lambda,\tilde{\xi}),\cdot\cdot\cdot,\varphi^{-}_N(x_1;\lambda,\tilde{\xi})\},\end{equation}
$N=r+1$, which are locally analytic (in $(\lambda, \tilde \xi)$)
bases for the stable and unstable manifolds as $x_1\to +\infty$ and
$x_1\to -\infty$, respectively, that is, the (unique) manifolds of
solutions decaying exponentially as $x_1\to \pm \infty$.  There are
also solutions of $\eqref{ev}$
\begin{equation}
\{\psi^{-}_1(x_1;\lambda,\tilde{\xi}),\cdot\cdot\cdot,\psi^{-}_k(x_1;\lambda,\tilde{\xi})\}\end{equation}
and
\begin{equation}
\{\psi^{+}_{k+1}(x_1;\lambda,\tilde{\xi}),\cdot\cdot\cdot,\psi^{+}_N(x_1;\lambda,\tilde{\xi})\}\end{equation}
which are locally analytic (in $(\lambda, \tilde \xi)$) bases for
stable and unstable manifolds as $x_1\to -\infty$ and $x_1\to
+\infty$, respectively, that is, manifolds of solutions blowing up
exponentially as $x_1\to \pm \infty $ (not unique).
\end{prop}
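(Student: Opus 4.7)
\noindent\emph{Proof proposal.}
The plan is to reduce the statement, on the domain of consistent splitting $\Lambda$, to a standard analytic ODE perturbation argument. Rewrite the eigenvalue equation \eqref{ev} as a first-order system $W'=M(x_1;\lambda,\tilde\xi)W$ with
\[
M(x_1;\lambda,\tilde\xi):=(A^{1}(\bar U(x_1)))^{-1}\Bigl(dQ(\bar U(x_1))-\sum_{j=2}^{d} i\xi_j A^{j}(\bar U(x_1))-\lambda\Bigr),
\]
well-defined by (H1)(ii). Under (H0)--(H4), the profile $\bar U(x_1)$ converges exponentially to its endstates $U_\pm$ (see \cite{MZ1}), hence $M(x_1)\to M_\pm(\lambda,\tilde\xi)$ exponentially as $x_1\to\pm\infty$, where $M_\pm$ are the constant-coefficient matrices of \eqref{const2}. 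By Definition~\ref{consplitdef}, throughout $\Lambda$ each $M_\pm$ has exactly $k$ eigenvalues in the open left half-plane and $N-k$ in the open right, with no pure imaginary spectrum.

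Fixing a base point $(\lambda_0,\tilde\xi_0)\in\Lambda$, I would first use Kato analytic perturbation theory to construct locally analytic spectral projectors $\Pi^{s/u}_\pm(\lambda,\tilde\xi)$ onto the stable/unstable invariant subspaces of $M_\pm$; analyticity is possible precisely because the two spectral groups remain separated by the imaginary axis throughout a neighborhood of $(\lambda_0,\tilde\xi_0)$. Choose locally analytic bases of the four relevant ranges: $\{r_i^+\}_{i=1}^k$ in $\Range\Pi^{s}_+$, $\{r_i^-\}_{i=k+1}^N$ in $\Range\Pi^{u}_-$, and analogous $\tilde r_i^\pm$ in the complementary unstable/stable subspaces of $M_\pm$.

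Next I would invoke the conjugation lemma of \cite{MZ1,Z3} — a contraction-mapping argument in an exponentially weighted function space — to produce locally analytic matrices $P_\pm(x_1;\lambda,\tilde\xi)=I+\Theta_\pm$ with $\Theta_\pm\to 0$ at an exponential rate as $x_1\to\pm\infty$, such that $W=P_\pm Z$ conjugates $W'=MW$ to the constant-coefficient system $Z'=M_\pm Z$ on a half-line. The decaying solutions at $+\infty$ are then defined by $\varphi^+_i(x_1):=P_+(x_1)e^{M_+x_1}r_i^+$, the blowing-up solutions at $+\infty$ by $\psi^+_i(x_1):=P_+(x_1)e^{M_+x_1}\tilde r_i^+$, and the corresponding $\varphi^-_i$, $\psi^-_i$ symmetrically using $P_-$ and the bases of invariant subspaces of $M_-$; each is then extended to all of $\RR$ by the (analytic) flow of the ODE, preserving local analyticity in $(\lambda,\tilde\xi)$. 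Uniqueness of the stable manifold at $+\infty$ (of dimension $k$) is classical from the linear stable-manifold theorem; non-uniqueness of the blowing-up solutions reflects the freedom to add arbitrary multiples of the decaying ones.

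The main obstacle is securing \emph{analytic} rather than merely continuous dependence on $(\lambda,\tilde\xi)$ in the conjugator $P_\pm$. This is handled by writing the equation for $\Theta_\pm$ as a Sylvester-type integral equation whose kernel decays exponentially — the spectral gap between the two halves of $\sigma(M_\pm)$, guaranteed in $\Lambda$, is what makes the kernel integrable — and applying the Banach fixed-point theorem in a space of analytic, exponentially decaying operator-valued functions. All remaining steps (selection of analytic bases, extension by the flow) preserve analyticity automatically.
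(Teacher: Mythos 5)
Your proposal is correct and follows essentially the same route as the paper, which proves the proposition by citing exponential convergence of the profile coefficients together with the gap and conjugation lemmas of Appendix~\ref{appB} (and \cite{MZ1,Z3}); you have simply written out the details that the paper leaves implicit. One minor point: when writing \eqref{ev} as a first-order system the coefficient matrix should also contain the term $-(A^{1})^{-1}(A^{1}(\bar U(x_1)))'$, but since this decays exponentially it does not affect the limits $M_\pm$ or any step of the argument.
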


\begin{proof}
This standard result holds for general variable-coefficient systems
whose coefficients converge exponentially as $x_1\to \pm \infty$ (a
consequence of the gap and conjugation lemmas of Appendix
\ref{appB}; see \cite{MZ1,MZ3,Z3,Z4}.
\end{proof}

%

\begin{defi}\label{def:evans}(Evans function)
For $(\lambda, \tilde \xi)$ in the domain of consistent splitting,
we define the Evans function as
\begin{equation}
D(\lambda,\tilde{\xi}):= \textup{ det
}(\varphi^{+}_1,...,\varphi^{+}_k,\varphi^{-}_{k+1},...,\varphi^{-}_N)|_{x_1=0}.
\end{equation}
\end{defi}

Evidently, the Evans function is locally analytic in $(\lambda,
\tilde \xi)$ in the domain of consistent splitting, with zeros of
$D(\cdot, \tilde \xi)$ corresponding to eigenvalues of $L_{\tilde
\xi}$. We shall show in Section \ref{subLF} that $\varphi_k^\pm$,
hence $D$ as well, extend analytically to a domain
$$
(\lambda, \tilde \xi)\in \{\lambda:\, \Re \lambda \ge -\eta\} \times
\RR^{d-1},\quad \eta>0.
$$

\subsection{Construction of the resolvent kernel}
We next derive explicit representation formulae for the resolvent
kernel $G_{\lambda,\tilde{\xi}}$.
We seek a solution of form\\
\begin{align}
G_{\lambda,\tilde{\xi}}(x_1,y_1)=
\left\{%
\begin{array}{ll}
 \Phi^{+}(x_1;\lambda,\tilde{\xi})N^{+}(y_1;\lambda,\tilde{\xi}) & ,\;\;\;x_1>y_1 \\
 \Phi^{-}(x_1;\lambda,\tilde{\xi})N^{-}(y_1;\lambda,\tilde{\xi}) & ,\;\;\;x_1<y_1 \\
\end{array}%
\right.
\end{align}
where
\begin{equation}
\Phi^{+}(x_1;\lambda,\tilde{\xi})=
\big(\varphi^{+}_1(x_1;\tilde{\xi},\lambda),\cdot\cdot\cdot,\varphi^{+}_k(x_1;\tilde{\xi},\lambda)\big)\in
\mathbb{R}^{N\times k}
\end{equation} and
\begin{equation}
\Phi^{-}(x_1;\lambda,\tilde{\xi})=
\big(\varphi^{-}_{k+1}(x_{1};\tilde{\xi},\lambda),\cdot\cdot\cdot,\varphi^{-}_N(x_1;\tilde{\xi},\lambda)\big)\in
\mathbb{R}^{N\times(N-k)}.
\end{equation}

Imposing the jump condition \begin{equation}\label{jump}
\begin{pmatrix}
\Phi^{+}(y_1;\lambda,\tilde{\xi}) &
\Phi^{-}(y_1;\lambda,\tilde{\xi})
\end{pmatrix}
\begin{pmatrix}
N^{+}(y_1;\lambda,\tilde{\xi}) \\
-N^{-}(y_1;\lambda,\tilde{\xi})
\end{pmatrix}=-(A^{1})^{-1}(y_1),
\end{equation}
 and inverting \eqref{jump}, we express for the resolvent kernel
$G_{\lambda,\tilde{\xi}}$:

\begin{align}
G_{\lambda,\tilde{\xi}}(x_1,y_1)=
\left\{%
\begin{array}{ll}
 -\big(
 \Phi^{+}(x_1;\lambda,\tilde{\xi}) \;\;\; 0\big)
 \big(\Phi^{+} \;\;\; \Phi^{-}\big)^{-1}(y_1;\lambda,\xi)(A^{1})^{-1}(y_1),& \\
  \:\:\:\:\:\:\:\:\:\:\:\:\:\:\:\:\:\:\:\:\:\:\:\:\:\:\:\:\:\:\:\:\:\:\:\:\:\:\:\:\:\:\:\:\:
 \:\:\:\:\:\:\:\:\:\:\:\:\:\:\:\:\:\:\:\:\:\:\:\:\:\:\:\:\:\:\:\:\:\:\:\:\:\:\:\:\:\:\:\:\: x_1>y_1 \\
 \big(
 0\;\;\; \Phi^{-}(x_1;\lambda,\tilde{\xi})\big)
 \big(
 \Phi^{+} \;\;\; \Phi^{-}\big)^{-1}(y_1;\lambda,\xi)(A^{1})^{-1}(y_1),&\\
 \:\:\:\:\:\:\:\:\:\:\:\:\:\:\:\:\:\:\:\:\:\:\:\:\:\:\:\:\:\:\:\:\:\:\:\:\:\:\:\:\:\:\:\:\:
 \:\:\:\:\:\:\:\:\:\:\:\:\:\:\:\:\:\:\:\:\:\:\:\:\:\:\:\:\:\:\:\:\:\:\:\:\:\:\:\:\:\:\:\:\: x_1<y_1 \\
\end{array}%
\right.
\end{align}



Now, consider the dual equation of \eqref{ev}.
\begin{equation}
(L^{*}_{\tilde{\xi}}-\lambda^{*})\tilde{W}=0
\end{equation}
where
$$L^{*}_{\tilde{\xi}}\tilde{W}:=(A^{1})^{*}\tilde{W}'+(dQ^{*}-A_{\tilde{\xi}}^{*})\tilde{W}$$
\begin{lem}
For any $W, \tilde{W}$ such that $(L_{\tilde{\xi}}-\lambda)W=0$ and
$(L^{*}_{\tilde{\xi}}-\lambda^{*})\tilde{W}=0$, there holds
\begin{equation}\label{dual}
\langle\tilde{W},A^{1}W\rangle\equiv constant
\end{equation}
\end{lem}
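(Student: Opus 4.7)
The statement is the familiar Wronskian--type identity for a first--order linear ODE system and its formal adjoint, so my plan is to prove it by a direct one--line differentiation.

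I would begin by computing $\frac{d}{dx_1}\langle \tilde W, A^1 W\rangle$ using the Leibniz rule, treating $\langle \cdot,\cdot\rangle$ as the pointwise (complex) inner product $\langle a,b\rangle = a^* b$, so that
\begin{equation*}
\frac{d}{dx_1}\langle \tilde W, A^1 W\rangle
= \langle \tilde W', A^1 W\rangle + \langle \tilde W, (A^1 W)'\rangle.
\end{equation*}
From the eigenvalue equation $(L_{\tilde\xi}-\lambda)W=0$, rearranged as $(A^1 W)' = -\lambda W - A_{\tilde\xi} W + dQ\,W$ (where $A_{\tilde\xi}:= i\sum_{j\ge 2}\xi_j A^j$), I substitute into the second term. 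From the adjoint equation $(A^1)^*\tilde W' = \lambda^*\tilde W + A_{\tilde\xi}^*\tilde W - dQ^*\tilde W$, I rewrite $\langle \tilde W',A^1 W\rangle = \langle (A^1)^*\tilde W', W\rangle$ and substitute into the first term.

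Adding the two gives contributions
\begin{equation*}
(\lambda - \lambda)\,\tilde W^* W
 + (\tilde W^* A_{\tilde\xi} W - \tilde W^* A_{\tilde\xi} W)
 + (-\tilde W^* dQ\,W + \tilde W^* dQ\,W) \;=\; 0,
\end{equation*}
so the derivative vanishes identically in $x_1$, yielding the stated conservation. Equivalently, and perhaps more conceptually, the pointwise identity
\begin{equation*}
\tilde W^{*}(L_{\tilde\xi}W) - (L^{*}_{\tilde\xi}\tilde W)^{*}W \;=\; -\frac{d}{dx_1}\bigl(\tilde W^{*} A^{1} W\bigr)
\end{equation*}
follows by a single integration by parts in the $x_1$ derivative term (there being no other $x_1$--derivatives in $L_{\tilde\xi}$); its left hand side equals $\lambda\,\tilde W^{*}W - \lambda\,\tilde W^{*}W = 0$ on solutions of the eigenvalue and adjoint eigenvalue problems.

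There is no real obstacle here: the only thing to keep track of is that $\lambda^{*}$ denotes complex conjugate while $(A^{1})^{*}$ denotes conjugate transpose, and that $A^{1}$, $A^{j}$, $dQ$ are matrix--valued functions of $x_1$ alone, so taking the $x_1$--derivative commutes with no extra terms beyond those already recorded. The computation is essentially identical to the Wronskian argument used for the one--dimensional resolvent kernel in \cite{MZ1, Z4}.
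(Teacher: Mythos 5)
Your proof is correct and follows exactly the paper's argument: differentiate $\langle\tilde W,A^{1}W\rangle$, move $A^1$ onto $\tilde W'$ via the adjoint, substitute $(A^1W)'=(-\lambda I+dQ-A_{\tilde\xi})W$ and $(A^1)^*\tilde W'=(\lambda^*I-dQ^*+A_{\tilde\xi}^*)\tilde W$, and observe the cancellation. No differences worth noting.
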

\begin{proof}
\begin{align}
\langle\tilde{W},A^{1}W\rangle'&=\langle(A^{1})^{*}\tilde{W}',W\rangle+\langle\tilde{W},(A^{1}W)'\rangle\nonumber\\
&=\langle(\lambda^{*}I-dQ^{*}+A_{\tilde{\xi}}^{*})\tilde{W},W\rangle+\langle\tilde{W},(-\lambda I+dQ-A_{\tilde{\xi}})W\rangle\nonumber\\
&=0\nonumber
\end{align}
\end{proof}

From \eqref{dual}, it follows that if there are $k$ independent
solutions $\varphi^{+}_1,...,\varphi^{+}_k$ of
$(L_{\tilde{\xi}}-\lambda I)W=0$ decaying at $+\infty$ and $N-k$
independent solutions $\varphi^{-}_{k+1},...,\varphi^{-}_N$ of the
same equation decaying at $-\infty$, then there exist $N-k$
independent solutions
$\tilde{\psi}^{+}_{k+1},...,\tilde{\psi}^{+}_N$ of
$(L_{\tilde{\xi}}^{*}-\lambda^{*}I)\tilde{W}=0$ decaying at
$+\infty$ and $k$ independent solutions
$\tilde{\psi}^{-}_1,...,\tilde{\psi}^{-}_k$ of the same equation
decaying at $-\infty$. More precisely, setting
\begin{equation}
\Psi^{+}(x_1;\lambda,\tilde{\xi})=\big(\psi^{+}_{k+1}(x_1;\lambda,\tilde{\xi})
\;\;\;\cdot\cdot\cdot\;\;\;\psi^{+}_{N}(x_1;\lambda,\tilde{\xi})\big)\in
\RR^{N\times (N-k)}
\end{equation}
\begin{equation}
\Psi^{-}(x_1;\lambda,\tilde{\xi})=\big(\psi^{-}_{1}(x_1;\lambda,\tilde{\xi})
\;\;\;\cdot\cdot\cdot\;\;\;\psi^{-}_{k}(x_1;\lambda,\tilde{\xi})\big)\in
\RR^{N\times k}
\end{equation}and
\begin{equation}
\Psi(x_1;\lambda,\tilde{\xi})=\big(\Psi^{-}(x_1;\lambda,\tilde{\xi})
\;\; \; \Psi^{+}(x_1;\lambda,\tilde{\xi})\big)\in \RR^{N\times N}
\end{equation}
where $\psi^{\pm}_j$ are the exponentially growing solutions at
$\pm\infty$ respectively, of $(L_{\tilde{\xi}}-\lambda I)W=0$ as
described above. We may define dual exponentially decaying and
growing solutions $\tilde{\psi}^{\pm}_j$ and
$\tilde{\varphi}^{\pm}_j$ via
\begin{equation}
\big(\tilde{\Psi}\;\;\;\tilde{\Phi}\big)^{*}_{\pm}A^{1}\big(\Psi\;\;\;\Phi\big)_{\pm}\equiv
I
\end{equation}
We seek the Green function $G_{\lambda,\tilde{\xi}}$ in the form
\begin{align}\label{G1}
G_{\lambda,\tilde{\xi}}(x_1,y_1)=
\left\{%
\begin{array}{ll}
 \Phi^{+}(x_1;\lambda,\tilde{\xi})\; M^{+}(\lambda,\tilde{\xi})\;\tilde{\Psi}^{-*}(y_1;\lambda,\tilde{\xi}) & ,\;\;\; x_1>y_1 \\
 \Phi^{-}(x_1;\lambda,\tilde{\xi})\; M^{-}(\lambda,\tilde{\xi})\;\tilde{\Psi}^{+*}(y_1;\lambda,\tilde{\xi}) & ,\;\;\;x_1<y_1 \\
\end{array}%
\right.,
\end{align}
where
\begin{equation}\label{M}
M(\lambda,\tilde{\xi}):=\begin{pmatrix}
-M^{+}(\lambda,\tilde{\xi}) & 0\\
0 &
M^{-}(\lambda,\tilde{\xi})\end{pmatrix}=\Phi^{-1}(z;\lambda,\tilde{\xi})\;(A^{1})^{-1}(z)\;\tilde{\Psi}^{-1*}(z;\lambda,\tilde{\xi})
\end{equation}
and
\begin{equation}
\tilde{\Psi}:=\big(\tilde{\Psi}^{-}\;\;\;\tilde{\Psi}^{+}\big).
\end{equation}
Note that the independence of the righthand side with respect to $z$
is a consequence of the previous lemma. Thus,

\begin{align}\label{GM}
G_{\lambda,\tilde{\xi}}(x_1,y_1)=
\left\{%
\begin{array}{ll}
 -\left(
 \Phi^{+}(x_1;\lambda,\tilde{\xi}) \;\;\; 0\right)\;
 M(\lambda,\tilde{\xi})\;\left(\tilde{\Psi}^{-}\left(y_1;\lambda,\xi\right)\;\;\; 0\right)^{*}& ,\;\;\;x_1>y_1 \\
 \left(0\;\;\;
 \Phi^{-}(x_1;\lambda,\tilde{\xi}) \right)\;
 M(\lambda,\tilde{\xi})\;\left(0\;\;\; \tilde{\Psi}^{+}\left(y_1;\lambda,\xi\right)\right)^{*} & ,\;\;\; x_1<y_1 \\
\end{array}%
\right.
\end{align}

\begin{prop}
For $(\lambda,\tilde \xi)\in \Lambda$, there hold
\begin{equation}\label{e:101a}\begin{array}{rl}
G_{\lambda,\tilde{\xi}} (x_1,y_1) = & \left\{
\begin{array}{ll}
\vspace{2mm} \displaystyle
\sum_{k,j}M^{+}_{jk}(\lambda,\tilde{\xi})\varphi^{+}_j(x_1;\lambda,\tilde{\xi})\tilde{\psi}_k^{-}(y_1;\lambda,\tilde{\xi})^{*}
& \text{ for } y_1\leq 0 \leq x_1,\\
\vspace{2mm} \displaystyle \sum_{k,j}d^{+}_{jk}(\lambda,\tilde{\xi})\varphi^{-}_j(x_1;\lambda,\tilde{\xi})\tilde{\psi}_k^{-}(y_1;\lambda,\tilde{\xi})^{*} &\\
\vspace{2mm} \hspace{8mm} \displaystyle -\sum_{k}\psi^{-}_k(x_1;\lambda,\tilde{\xi})\tilde{\psi}^{-}_k(y_1;\lambda,\tilde{\xi})^{*} & \text{ for }y_1\leq x_1\leq 0,\\
\vspace{2mm} \displaystyle \sum_{k,j}d^{-}_{jk}(\lambda,\tilde{\xi})\varphi^{-}_j(x_1;\lambda,\tilde{\xi})\tilde{\psi}_k^{-}(y_1;\lambda,\tilde{\xi})^{*} &\\
\hspace{8mm} \displaystyle
+\sum_{k}\varphi^{-}_k(x_1;\lambda,\tilde{\xi})\tilde{\varphi}^{-}_k(y_1;\lambda,\tilde{\xi})^{*}
& \text{ for }x_1\leq y_1\leq 0,\\
\end{array}
\right.
\end{array}
\end{equation}
with
\begin{equation}M^{+}=(-I,0)\begin{pmatrix}
\Phi^{+}&\Phi^{-}\end{pmatrix}^{-1}\Psi^{-}
\end{equation}
and
\begin{equation}d^{+}=(0,I)\begin{pmatrix}
\Phi^{+}&\Phi^{-}\end{pmatrix}^{-1}\Psi^{-}.
\end{equation}
A symmetric representation holds for $y_1\geq 0$.
\end{prop}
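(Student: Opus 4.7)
The plan is to read off the three stated representations directly from the compact formula \eqref{GM} by (i) expanding the block structure of $M$ and (ii) performing a change of basis in whichever side ($x_1$ or $y_1$) sits on the ``wrong'' half-line relative to the asymptotic data defining the relevant solutions. From \eqref{M} together with $M = \text{blockdiag}(-M^+, M^-)$, the formula \eqref{GM} collapses to $G_{\lambda,\tilde\xi}(x_1, y_1) = \Phi^+(x_1)\, M^+\, \tilde\Psi^{-*}(y_1)$ for $x_1 > y_1$, and $G_{\lambda,\tilde\xi}(x_1, y_1) = \Phi^-(x_1)\, M^-\, \tilde\Psi^{+*}(y_1)$ for $x_1 < y_1$.

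The first representation (for $y_1 \le 0 \le x_1$) is the immediate column-wise expansion of the former. For the second representation (for $y_1 \le x_1 \le 0$), we still have $x_1 > y_1$, but now $x_1 \le 0$, so we must re-express the factor $\Phi^+(x_1) M^+$ in terms of solutions natural at $-\infty$. The definitions $M^+ = (-I,0)\Phi^{-1}\Psi^-$ and $d^+ = (0,I)\Phi^{-1}\Psi^-$ in the statement amount precisely to the two block rows of $\Phi^{-1}\Psi^-$; thus $\Phi^{-1}\Psi^- = (-M^+,\, d^+)^{t}$, or equivalently $\Psi^- = -\Phi^+ M^+ + \Phi^- d^+$, giving the rewrite $\Phi^+ M^+ = \Phi^- d^+ - \Psi^-$. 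Substituting this into $\Phi^+(x_1) M^+ \tilde\Psi^{-*}(y_1)$ produces the claimed expansion in terms of $\varphi^-_j, \tilde\psi^-_k, \psi^-_k$.

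The third representation (for $x_1 \le y_1 \le 0$) requires the analogous maneuver on the $y$-side. Starting from $G = \Phi^-(x_1) M^- \tilde\Psi^{+*}(y_1)$, we rewrite $\tilde\Psi^{+*}(y_1)$ in the basis $(\tilde\Psi^{-*}, \tilde\Phi^{-*})$ natural at $y_1 = -\infty$. The key tool is the biorthogonality $(\tilde\Psi^-, \tilde\Phi^-)^* A^1 (\Psi^-, \Phi^-) = I$, which holds identically in $x_1$ by the constancy relation \eqref{dual} applied pairwise, and which is inherited from the asymptotic biorthogonality at $-\infty$ used to define $\tilde\Psi^-, \tilde\Phi^-$. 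Writing $\tilde\Psi^{+*} = P\, \tilde\Psi^{-*} + Q\, \tilde\Phi^{-*}$ with constant $P, Q$, the biorthogonality identifies $P = \tilde\Psi^{+*} A^1 \Psi^-$ and $Q = \tilde\Psi^{+*} A^1 \Phi^-$. Combining with the identity $\tilde\Psi^* = M^{-1} \Phi^{-1} (A^1)^{-1}$ obtained from \eqref{M}, one reads off $M^- Q = I$ (giving the $\Phi^-\tilde\Phi^{-*}$ term) and $M^- P = d^-$ (giving the $\Phi^- d^- \tilde\Psi^{-*}$ term), whereupon substitution yields the third representation. The symmetric statement for $y_1 \ge 0$ follows verbatim by interchanging the roles of $+$ and $-$.

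The main obstacle is the bookkeeping in the third case: one must carefully keep track of which solutions are specified by asymptotics at which infinity, and match the change-of-basis coefficients for the adjoint solutions to the block structure of $M$ via \eqref{M}. Once the constancy-based biorthogonality relations and the block identification $\Phi^{-1}\Psi^- = (-M^+, d^+)^{t}$ are in hand, the remainder is routine linear algebra with no analytic content beyond membership of $(\lambda,\tilde\xi)$ in the domain of consistent splitting $\Lambda$, where all the $\Phi^{\pm}, \Psi^{\pm}, \tilde\Phi^{\pm}, \tilde\Psi^{\pm}$ are guaranteed by Proposition \ref{2.3} and its dual counterpart.
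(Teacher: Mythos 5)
Your argument is, in substance, the paper's own proof: the paper disposes of this proposition in one line (``by \eqref{M} together with Kramer's rule,'' deferring to the one-dimensional case of [MZ1]), and what you have written is exactly that computation carried out explicitly --- expand \eqref{GM} using the block structure of $M$ and change basis on whichever side lies in $x_1\le 0$ via the duality relations. Your three cases are correct, including the (unstated in the proposition) identification $d^-=d^+=(0,I)\bigl(\Phi^+\ \Phi^-\bigr)^{-1}\Psi^-$ that falls out of the third case.

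One step you elide: in case 1 you read the coefficients $M^+_{jk}$ off from \eqref{G1}, where $M^+$ is \emph{defined} through $M=\Phi^{-1}(A^1)^{-1}\tilde\Psi^{-1*}$ in \eqref{M}, whereas in case 2 you use the closed form $M^+=(-I,0)\Phi^{-1}\Psi^-$ asserted in the statement. These are two different characterizations, and their agreement is itself part of what the proposition claims. The reconciliation is the same duality identity you already invoke in case 3: writing $(A^1)^{-1}=(\Psi^-\ \ \Phi^-)(\tilde\Psi^-\ \ \tilde\Phi^-)^*$ and noting $(I,0)\Phi^{-1}\Phi^-=0$ gives $-(I,0)\Phi^{-1}(A^1)^{-1}=\bigl[(-I,0)\Phi^{-1}\Psi^-\bigr]\tilde\Psi^{-*}$, which, compared with the first block row of \eqref{M}, identifies the two expressions for $M^+$ (and likewise $d^+$ from the second block row). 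With that one line inserted, your proof is complete; note also that all of these formulas require $D\neq 0$, i.e., they hold on $\Lambda$ away from eigenvalues and extend meromorphically.
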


\begin{proof}
This follows exactly as in the one-dimensional case \cite{MZ1}, by
\eqref{M} together with Kramer's rule.
\end{proof}

\begin{rem}\label{genbd}
\textup{ Representation \eqref{GM} together with uniform exponential
decay of $\Phi^\pm$, $\tilde \Psi^\pm$, Proposition \ref{2.3}, and
the fact that $d_\pm$ are bounded when the Evans function
$D:=\det(\Phi^+,\Phi^-)$ does not vanish yields uniform bounds
$$
|G_{\lambda, \tilde \xi}(x,y)|\le Ce^{-\theta |x-y|},
$$
$\theta>0$, on the resolvent set $\rho(L_{\tilde \xi})$, in
particular (by assumption ($\mathcal{D}1$)) for $\Re \lambda \ge
-\eta$, $\eta>0$ on intermediate frequencies $1/R\le
|(\lambda,\tilde \xi)|\le R$, $R>0$ arbitrary. However, we shall not
use this in our analysis, carrying out instead energy-based
resolvent estimates for intermediate and high frequencies. We shall
use \eqref{GM} only in the {\it low frequency} regime
$|(\lambda,\tilde \xi)|<<1$. }
\end{rem}

\subsection{Low frequency bounds}\label{subLF}

We now examine in further detail behavior for small frequencies,
carrying out at the same time the analytic extension of normal modes
and Evans function beyond the region of consistent splitting. As in
\cite{HoZ2}, for our later arguments it will be important to
enlarge the domain of $\tilde \xi$ to complex values $\tilde \xi\in
\CC^{d-1}$, and so we will do this at the same time.

\begin{lem}
Under the assumptions of Theorem \ref{main}, for $|(\tilde \xi,
\lambda)|$ sufficiently small, $\tilde \xi$ now taken in
$\CC^{d-1}$, the eigenvalue equation
$(L_{\pm,\tilde{\xi}}-\lambda)W=0$ associated with the limiting,
constant-coefficient operator $L_{\pm,\tilde{\xi}}$ has a basis of
$(1+r)$ solutions, for $m=1,...,r$,
\begin{equation}
\bar{W}^{\pm}_m=e^{\mu_m^{\pm}(\lambda,\tilde{\xi})x_1}V^{\pm}_m(\lambda,\tilde{\xi}),
\end{equation}
$\mu^{\pm}_m, V^{\pm}_m$, analytic in $\lambda$ and $\tilde{\xi}$,
consisting of r ``fast" modes
\begin{equation}\label{mus}
\mu^{\pm}_m=\gamma^{\pm}_m+
\mathcal{O}(\lambda,\tilde{\xi}),\;\;\;\;\;V^{\pm}_m=(A^{1}_\pm)^{-1}
S_m^{\pm}+\mathcal{O}(\lambda,\tilde{\xi}),
\end{equation}
$S^{\pm}_m=(0,s^{\pm t}_m)^{t}$ where $\gamma^{\pm}_m, s^{\pm}_m$
are eigenvalues and associated right eigenvectors of $dq_\pm
(A^{1}_\pm)^{-1}(0,I_r)^{t}$
(equivalently, $\gamma^{\pm}_m, S^{\pm}_m$ are nonzero eigenvalues
and associated right eigenvectors of $(Q_\pm-i \sum_{j\neq 1}\xi_j
A^{j}_\pm)(A^{1}_\pm)^{-1}$), and $1$ ``slow" mode
\begin{align}\label{mr+1}
\mu^{\pm}_{r+1}(\lambda,\tilde{\xi})
&=(-\frac{1}{a_1^{\pm}})(\lambda+i\tilde{\xi}\cdot\tilde{a}^\pm)+\frac{b_{11}^{*,\pm}}{(a_1^{\pm})^{3}}(\lambda+i\tilde{\xi}\cdot\tilde{a}^{\pm})^{2}\\
&\;\;\;\;\;
-\frac{1}{a_1^{\pm}}\tilde{\xi}^{t}B_{11}^{*}\tilde{\xi}+\mathcal{O}(|\lambda+|\tilde{\xi}||^{3})\nonumber
\end{align}

and
\begin{equation}
V^{\pm}_{r+1}(\lambda,\tilde{\xi}):=R^{*\pm}_1+\mathcal{O}(\lambda,\tilde{\xi}),
\end{equation}
where $a_1^\pm=a_1^*(\pm \infty)$, $\tilde a^\pm=(a_2^*,\dots,
a_d^*)(\pm \infty)$, $R^{*\pm}_1=V^1_1(\pm \infty)$, and
$b_{jk}^{*,\pm}=b_{jk}^*(\pm\infty)$, with $a_j^{*}$, $V^1_1$, and
$b_{jk}^*$ as defined in \eqref{ajstar}, \eqref{Vj1}, and
\eqref{Abjk}, Appendix \ref{appA}. 
Likewise, the adjoint eigenvalue equation
$(L_{\pm,\tilde{\xi}}-\lambda)^{*}Z=0$ has a basis of solutions
\begin{equation}
\bar{\tilde{W}}^{\pm}_m=e^{-\mu_m^{\pm}(\lambda,\tilde{\xi})x_1}\tilde{V}^{\pm}_m(\lambda,\tilde{\xi}),
\end{equation} where
\begin{equation}
\tilde{V}^{\pm}_m(\lambda,\tilde{\xi})=\tilde{T}^{\pm}_m+\mathcal{O}(\lambda,\tilde{\xi})
\end{equation}
and
\begin{equation}
\tilde{V}^{\pm}_{r+1}(\lambda,\tilde{\xi})=\tilde{L}^{\pm
*}_1+\mathcal{O}(\lambda,\tilde{\xi}),
\end{equation}
where $\tilde{V}$ is analytic in $(\lambda, \tilde{\xi})$,
$\tilde{T}_m$ are the left eigenvectors of
$$
(Q_\pm-i \sum_{j\neq 1}\xi_j A^{j}_\pm)(A^{1}_\pm)^{-1}
$$
associated with the nonzero eigenvalues $-\mu_m^{\pm}$,
and $L^{*\pm}_1=(1,0_{r})^T$.
\end{lem}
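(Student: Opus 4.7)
The plan is to recast the limiting eigenvalue equation as a first-order linear ODE system with constant coefficients, analyze the spectral structure of its coefficient matrix at the base point $(\lambda, \tilde\xi) = (0, 0)$, and then apply analytic perturbation theory in the two spectral groups. Writing $(L_{\pm,\tilde\xi} - \lambda)W = 0$ in the form $W' = M_\pm(\lambda,\tilde\xi)W$ with
\[
M_\pm(\lambda,\tilde\xi) := (A^1_\pm)^{-1}\Big(dQ_\pm - i\!\!\sum_{j\neq 1}\!\xi_j A^j_\pm - \lambda\Big),
\]
solutions take the form $e^{\mu x_1}V$ where $(\mu, V)$ is an eigenpair of $M_\pm$. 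Since $M_\pm$ is analytic (in fact polynomial) in $\tilde\xi$, the extension from real to complex $\tilde\xi \in \CC^{d-1}$ is automatic, and all the perturbation-theoretic machinery we invoke below applies verbatim in the complex setting.

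The essential structural fact is that $M_\pm(0,0) = (A^1_\pm)^{-1}dQ_\pm$, and since $Q = (0, q)^t$ vanishes on the equilibrium manifold $E$ with $q_v$ invertible there by \eqref{cond}, $dQ_\pm$ has rank exactly $r$ with a one-dimensional kernel in the hydrodynamic direction. Thus $M_\pm(0,0)$ splits spectrally into $r$ nonzero \emph{fast} eigenvalues $\{\gamma_m^\pm\}$ (the nonzero eigenvalues of $dq_\pm(A^1_\pm)^{-1}(0,I_r)^t$) with right eigenvectors $(A^1_\pm)^{-1} S_m^\pm$, and a \emph{simple} zero eigenvalue with right eigenvector $R^{*\pm}_1 = V^1_1(\pm\infty)$ and left eigenvector $L^{*\pm}_1 = (1, 0_r)$ paired through $A^1_\pm$. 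Semisimplicity of the fast block (which is what is needed to produce $r$ analytic eigenvector branches rather than a joint analytic Jordan block) follows from the symmetrizability hypothesis (A1), which forces $A^0 dQ$ to be symmetric; separation of fast from slow is automatic since $\gamma_m^\pm \neq 0$.

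For the fast eigenvalues, standard analytic perturbation theory of semisimple isolated eigenvalues (Kato) yields locally analytic $\mu_m^\pm(\lambda,\tilde\xi)$ and $V_m^\pm(\lambda,\tilde\xi)$ with the first-order expansions \eqref{mus}. For the slow mode I would run a Lyapunov--Schmidt reduction: insert the ansatz $\mu_{r+1}^\pm = \mu^{(1)} + \mu^{(2)} + \cdots$ and $V_{r+1}^\pm = R^{*\pm}_1 + V^{(1)} + V^{(2)} + \cdots$, each superscript denoting homogeneous order in $(\lambda,\tilde\xi)$, into
\[
\Big(dQ_\pm - i\!\!\sum_{j\neq 1}\!\xi_j A^j_\pm - \lambda - \mu\,A^1_\pm\Big)V = 0,
\]
solve order by order for $V^{(k)}$ using the invertibility of $q_v$ on the complement of $\ker dQ_\pm$, and enforce solvability by pairing with $L^{*\pm}_1$. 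At first order this yields $\mu^{(1)} = -(a_1^\pm)^{-1}(\lambda + i\tilde\xi\cdot\tilde a^\pm)$; at second order it produces the dispersion term $b_{11}^{*,\pm}(\lambda + i\tilde\xi\cdot\tilde a^\pm)^2/(a_1^\pm)^3$ together with the transverse diffusion $-(a_1^\pm)^{-1}\tilde\xi^t B_{11}^*\tilde\xi$, exactly as in \eqref{mr+1}.

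The main obstacle is verifying that the second-order expression emerging from the solvability condition coincides term by term with the Chapman--Enskog coefficients $b_{jk}^*$ of \eqref{bjk}. I would handle this by recording intermediate formulas for $V^{(1)}$ (which involve inverting $q_v$ against the $u$-gradient of the source) and cross-referencing them directly with the Fourier-symbol expansion in Appendix \ref{appA}, which is organized for precisely this identification --- indeed the Lyapunov--Schmidt procedure here is algebraically the same computation as the Chapman--Enskog expansion of the dispersion relation for \eqref{second}. The adjoint basis $\bar{\tilde W}^\pm_m$ is obtained by applying the same procedure to $(L_{\pm,\tilde\xi} - \lambda)^* Z = 0$; alternatively, biorthogonality through the conserved pairing \eqref{dual} forces the dual eigenvectors to be analytic with the stated leading-order forms $\tilde T^\pm_m$ and $L^{*\pm}_1$ once the primal basis is in hand.
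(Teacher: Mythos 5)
Your proposal is correct in substance, but it organizes the slow-mode computation differently from the paper. The paper's proof does not perturb the eigenvalue $\mu$ of the coefficient matrix directly; instead it takes the dispersion relation $\lambda(\xi)=-i\xi\cdot a^{*}-\xi^{t}B^{*}\xi+\cdots$ already computed in Appendix \ref{appA}, substitutes the fundamental relation $\mu=i\xi_1$, and \emph{inverts} the resulting scalar relation to solve for $\mu$ as a function of $(\lambda,\tilde\xi)$, which produces \eqref{mr+1} in a few lines of algebra with the Chapman--Enskog coefficients $a^{*}_j$, $b^{*}_{jk}$ appearing automatically. Your Lyapunov--Schmidt reduction on $\bigl(dQ_\pm-i\sum_{j\neq1}\xi_jA^j_\pm-\lambda-\mu A^1_\pm\bigr)V=0$, with solvability enforced against the left kernel $(1,0_r)$ of $dQ_\pm$, is algebraically equivalent and does give $\mu^{(1)}=-(a_1^\pm)^{-1}(\lambda+i\tilde\xi\cdot\tilde a^\pm)$ at first order; but, as you acknowledge, it forces you to redo at second order essentially the same computation that Appendix \ref{appA} has already performed, and then match coefficients against \eqref{bjk}. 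The paper's inversion route buys exactly this: the hard algebra is done once, in the appendix, and the lemma's proof reduces to a change of variables. Two minor caveats: your claim that semisimplicity of the fast block follows from (A1) is not really justified ($dQ_\pm(A^1_\pm)^{-1}$ is not symmetric in any obvious inner product induced by $A^0$), though this gap is shared by the paper, which says nothing about the fast modes beyond their leading order and in any case only needs analytic bases of the fast stable/unstable invariant subspaces, not analytic individual eigenpairs; and note that the zeroth-order slow eigenvector, i.e.\ the kernel of $dQ_\pm$, is $V^0=(1,-q_v^{-1}q_u)^t$ rather than $V^1_1$ --- the identification $R^{*\pm}_1=V^1_1(\pm\infty)$ in the statement should be read with this in mind, a point on which you follow the paper's own (slightly inconsistent) notation.
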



\begin{proof}
By the inversion of the expressions
\begin{align}
\lambda(\xi)&=-i\xi\cdot a^{*}-\xi^{t} B^{*} \xi+\cdot\cdot\cdot
\;\;\;\\
&= -i \xi_1 a_1 -(b_{11}^{*}\xi^{2}_{1}+\sum_{j\neq 1}b_{j1}^{*}\xi_1\xi_j+\sum_{k\neq 1}b_{1k}^{*}\xi_k\xi_1)-i \tilde{\xi}\cdot \tilde{a}-\tilde{\xi}^{t}B_{11}^{*}\tilde{\xi}+\cdot\cdot\cdot\nonumber\\
&= -i \xi_1 \big(a_1-i( \sum_{j\neq 1}b_{j1}^{*}\xi_j+\sum_{k\neq 1}b_{1k}^{*}\xi_k)\big)-b_{11}^{*}\xi^{2}_{1}-i \tilde{\xi}\cdot \tilde{a}-\tilde{\xi}^{t}B_{11}^{*}\tilde{\xi}+\cdot\cdot\cdot \nonumber\\
&= -i \xi_1 \big(a_1- 2i \sum_{j\neq 1}b_{j1}^{*}\xi_j
\big)-b_{11}^{*}\xi^{2}_{1}-i \tilde{\xi}\cdot
\tilde{a}-\tilde{\xi}^{t}B_{11}^{*}\tilde{\xi}+\cdot\cdot\cdot
\nonumber
\end{align}
carried out in Appendix \ref{appA} for the dispersion curves near
$\xi=0$, together with the fundamental relation $\mu=i \xi_1$, we
have $R^{*\pm}_1=V^1_1(\pm \infty)$ and
\begin{eqnarray}
\mu^{\pm}_{r+1}(\lambda,\tilde{\xi})&:=&-\frac{\lambda}{a^{\pm}_1
-2i\sum_{j\neq 1}b_{j1}^{*,\pm}\xi_j}
+\frac{b^{*,\pm}_{11}}{(a^{\pm}_1 -2i\sum_{j\neq
1}b_{j1}^{*,\pm}\xi_j
)^{3} }\lambda^{2}+ \mathcal{O}(\lambda^{3})\nonumber\\
&=&-\frac{1}{a_1^{\pm}}(\lambda+i\tilde{\xi}\cdot\tilde{a}^{\pm}+\tilde{\xi}^{t}B_{11}^{*,\pm}\tilde{\xi})+\frac{b_{11}^{*,\pm}}{(a_1^{\pm})^{3}}(\lambda+i\tilde{\xi}\cdot\tilde{a}^{\pm}+\tilde{\xi}^{t}B_{11}^{*,\pm}\tilde{\xi})^{2}\nonumber\\
&&\;\;\;\;\;\;\;\;\;+\mathcal{O}(|\lambda+|\tilde{\xi}||^{3})\nonumber\\
&=&\Big(-\frac{1}{a_1^{\pm}}+2\frac{b_{11}^{*,\pm}}{(a_1^{\pm})^{3}}\tilde{\xi}^{t}B_{11}^{*}\tilde{\xi}\Big)\lambda+\frac{b_{11}^{*,\pm}}{(a_1^{\pm})^{3}}\lambda^{2}-\frac{1}{a_1^{\pm}}\tilde{\xi}^{t}B_{11}^{*}\tilde{\xi}\nonumber\\
&&\;\;-\frac{b_{11}^{*,\pm}}{(a_1^{\pm})^{3}}(\tilde{\xi}\cdot\tilde{a}^{\pm})^{2}
+i(\tilde{\xi}\cdot\tilde{a}^{\pm})\Big(-\frac{1}{a^{\pm}_1}+2\frac{b^{*,\pm}_{11}\lambda}{(a_1^{\pm})^{3}}\Big)+\mathcal{O}(|\lambda+|\tilde{\xi}||^{3})
\nonumber\\
&=&(-\frac{1}{a_1^{\pm}})(\lambda+i\tilde{\xi}\cdot\tilde{a}^\pm)+\frac{b_{11}^{*,\pm}}{(a_1^{\pm})^{3}}(\lambda+i\tilde{\xi}\cdot\tilde{a}^{\pm})^{2}\nonumber\\
&&\;\;\;\;\;
-\frac{1}{a_1^{\pm}}\tilde{\xi}^{t}B_{11}^{*}\tilde{\xi}+\mathcal{O}(|\lambda+|\tilde{\xi}||^{3}).\nonumber
\end{eqnarray}
The corresponding adjoint computation yields $L^{*\pm}_1=(1,0_r)^T$.
(See, for example, the one-dimensional computation of \cite{MZ1},
which is sufficient to determine $L^{*\pm}_1$.)

\end{proof}

\textbf{Normal modes.} Consider again the variable-coefficient
eigenvalue equations
\begin{equation} \label{variablece}
(L_{\tilde{\xi}}-\lambda)W=
(dQ-A_{\tilde{\xi}}-\lambda)W-(A^{1}W)'=0
\end{equation}
and the limiting constant-coefficient equations
\begin{equation}\label{const}
(L_{\pm,\tilde{\xi}}-\lambda)W=
(dQ_{\pm}-A_{\tilde{\xi},\pm}-\lambda)W-(A^{1}_{\pm}W)'=0.
\end{equation}

We now relate the normal modes of \eqref{variablece} to those of
\eqref{const}.

\begin{lem}\textbf{(normal modes)}
Under the assumptions of Theorem \ref{main}, for $(\lambda, \tilde
\xi)\in B(0,r)$, $r$ sufficiently small ($\tilde \xi$ now complex),
there exist solutions $W_m^{\pm}(x_1;\lambda,\tilde{\xi})$ of
\eqref{variablece}, $\mathcal{C}^{2}$ in $x_1$ and analytic in
$\lambda$ and $\tilde{\xi}$, satisfying
\begin{equation}
W_m^{\pm}(x_1;\lambda,\tilde{\xi})=e^{\mu^{\pm}_m x_1}
V^{\pm}_m(\lambda,\tilde{\xi})
\end{equation}

\begin{equation}
\Big(\frac{\partial}{\partial
\lambda}\Big)^{k}\Big(\frac{\partial}{\partial
\tilde{\xi}}\Big)^{l}V^{\pm}_m(x_1;\lambda,\tilde{\xi})
=\Big(\frac{\partial}{\partial
\lambda}\Big)^{k}\Big(\frac{\partial}{\partial \tilde{\xi}}\Big)^{l}
V^{\pm}_m(\lambda,\tilde{\xi})+\mathcal{O}(e^{-\tilde{\theta}|x_1|}|V^{\pm}_m(\lambda,\tilde{\xi})|), 
\end{equation}
for any $k\geq 0$ and $0<\tilde{\theta}<\theta$, where
$\mu_m^{\pm}(\lambda,\tilde{\xi})$, $V^{\pm}_m(\lambda,\tilde{\xi})$
are as in the previous lemma.
\end{lem}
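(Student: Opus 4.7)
The plan is to recast the eigenvalue equation \eqref{variablece} as a first-order linear ODE system $W'=\mathbb{A}(x_1;\lambda,\tilde\xi)W$ with
$$\mathbb{A}:=(\bar A^{1})^{-1}\bigl(d\bar Q-A_{\tilde\xi}-\lambda-(\bar A^{1})'\bigr),$$
observing via (H1)(ii) that $\bar A^{1}$ is uniformly invertible along the profile, and noting that this coefficient converges analytically in $(\lambda,\tilde\xi)$ to its constant-coefficient limits $\mathbb{A}_\pm(\lambda,\tilde\xi):=(A^{1}_\pm)^{-1}(dQ_\pm-A_{\tilde\xi,\pm}-\lambda)$ at an exponential rate $\theta>0$ inherited from the decay of the profile $\bar U(x_1)$ to $U_\pm$.

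I would then invoke the conjugation and gap lemmas of Appendix~\ref{appB} (exactly as in the proof of Proposition~\ref{2.3}) to produce matrix-valued conjugators $P^\pm(x_1;\lambda,\tilde\xi)$, analytic in $(\lambda,\tilde\xi)\in B(0,r)$ and $\mathcal{C}^{2}$ in $x_1$, satisfying $|P^\pm(x_1;\lambda,\tilde\xi)-I|\le Ce^{-\tilde\theta|x_1|}$ for any prescribed $\tilde\theta\in(0,\theta)$, such that the change of variables $W=P^\pm Z$ converts the variable-coefficient ODE into its constant-coefficient limit on each half-line $\pm x_1\ge 0$. Transporting the basis $\bar W_m^{\pm}=e^{\mu_m^{\pm}x_1}V_m^{\pm}(\lambda,\tilde\xi)$ from the preceding lemma through $P^\pm$ produces the candidate normal modes
$$W_m^{\pm}(x_1;\lambda,\tilde\xi)=P^\pm(x_1;\lambda,\tilde\xi)\,\bar W_m^{\pm}=e^{\mu_m^{\pm}x_1}\bigl(V_m^{\pm}(\lambda,\tilde\xi)+O(e^{-\tilde\theta|x_1|}\,|V_m^{\pm}(\lambda,\tilde\xi)|)\bigr),$$
which is exactly the claimed representation. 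The parameter-derivative bounds then follow from analyticity of $P^\pm$ and $V_m^\pm$ on $B(0,r)$: applying Cauchy's integral formula on a slightly smaller ball upgrades the sup bound $|P^\pm-I|=O(e^{-\tilde\theta|x_1|})$ (with a possibly smaller $\tilde\theta$) to the same bound, with modified constants, for every mixed $(\lambda,\tilde\xi)$-derivative of the correction term.

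The delicate point is the slow-mode block: since $\mu_{r+1}^{\pm}$ vanishes at $(\lambda,\tilde\xi)=0$ it lacks any uniform spectral gap from the other eigenvalues of $\mathbb{A}_\pm$ that may also pass through $0$, so the ordinary conjugation lemma fails for this block. Here one must use the \emph{gap} lemma, which permits tracking a single eigenvalue as long as its separation from the remainder of the spectrum exceeds the exponential decay rate of the coefficient; since the fast modes $\gamma_m^{\pm}$ are bounded away from $0$ on both sides, choosing $\tilde\theta$ strictly smaller than both $\min_m|\gamma_m^{\pm}|$ and $\theta$ secures this separation and yields the required analytic extension of the slow mode. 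The fast-mode blocks themselves enjoy a uniform spectral gap for $|(\lambda,\tilde\xi)|$ small and are handled by the standard conjugation lemma without modification.
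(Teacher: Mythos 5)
Your proposal is correct and follows essentially the same route as the paper, whose proof is precisely the one-line appeal to the preceding (constant-coefficient) lemma together with the gap/conjugation lemmas of Appendix~\ref{appB} that you have fleshed out. The only minor imprecision is in your explanation of why the gap lemma is needed for the slow mode: the fast eigenvalues $\gamma_m^{\pm}$ are bounded away from zero, so the slow mode \emph{is} spectrally separated from them; the real issue is that $\Re\mu_{r+1}^{\pm}$ changes sign near the origin, so the mode does not lie consistently in a stable or unstable subspace --- but this does not affect the validity of the argument, since the gap lemma applies to any analytic eigenvalue/eigenvector pair given the exponential convergence of the coefficients.
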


\begin{proof}
This is a direct consequence of the previous lemma and the gap
lemma, Lemma \ref{gaplemma}.
\end{proof}

\begin{prop}
There is a neighborhood of $(0,0)$ in $(\lambda,\tilde{\xi})$ space
($\tilde \xi$ now complex) in which, for $y_1\geq 0,$

\begin{equation}
G_{\lambda,\tilde{\xi}}(x_1,y_1) =
G^{1}_{\lambda,\tilde{\xi}}(x_1,y_1)+G^{2}_{\lambda,\tilde{\xi}}(x_1,y_1),
\end{equation}
where
\begin{align}\label{deco1}
G^{1}_{\lambda,\tilde{\xi}}(x_1,y_1)=
\left\{%
\begin{array}{ll}
 0& ,\;\;\; y_1\leq 0\leq x_1 \\
-\sum_j\psi^{-}_j(x_1;\lambda,\tilde{\xi})\tilde{\psi}^{-}_j(y_1;\lambda,\tilde{\xi})^{*}& ,\;\;\; y_1\leq x_1 \leq 0\\
\sum_j\varphi^{-}_j(x_1;\lambda,\tilde{\xi})\tilde{\varphi}^{-}_j(y_1;\lambda,\tilde{\xi})^{*}& ,\;\;\; x_1\leq y_1\leq 0 \\
\end{array}%
\right.
\end{align}


\begin{align}\label{deco2}
G^{2}_{\lambda,\tilde{\xi}}(x_1,y_1)=
\left\{%
\begin{array}{ll}
 \sum_{k,j}M^{+}_{jk}(\lambda,\tilde{\xi})
 \varphi^{+}_j(x_1;\lambda,\tilde{\xi})\tilde{\psi}_k^{-}(y_1;\lambda,\tilde{\xi})^{*}& ,\;\;\; y_1\leq 0\leq x_1 \\
\sum_{k,j}d^{+}_{jk}(\lambda,\tilde{\xi})\varphi^{-}_j(x_1;\lambda,\tilde{\xi})\tilde{\psi}_k^{-}(y_1;\lambda,\tilde{\xi})^{*}& ,\;\;\; y_1\leq x_1 \leq 0\\
\sum_{k,j}d^{-}_{jk}(\lambda,\tilde{\xi})\varphi^{-}_j(x_1;\lambda,\tilde{\xi})\tilde{\psi}_k^{-}(y_1;\lambda,\tilde{\xi})^{*}& ,\;\;\; x_1\leq y_1\leq 0 \\
\end{array}%
\right.
\end{align}

where
\begin{equation}
|M^{+}_{jk}|, |d^{\pm}_{jk}| \leq C_1 |D^{-1}|
\end{equation}
and
 $D(\lambda,\tilde{\xi})=\text{ det } \Phi$ is the Evans
function.

Moreover,



\begin{align}\label{GGG111}
G^{1}_{\lambda,\tilde{\xi}}(x_1,y_1)=
\left\{%
\begin{array}{ll}
 0& ,\;\;\; y_1\leq 0\leq x_1 \\
e^{\mu_{r+1}^{-}(x_1-y_1)}
\begin{pmatrix}
c_1(x_1) & 0\\
c_2(x_1)& 0\\
\end{pmatrix} \\
+\Big[e^{\mu_{r+1}^{-} (x_1-y_1)}\\
\;\;\;\;\;\;\;\;\;\; \times
\mathcal{O}\big(|\lambda|+|\tilde{\xi}|\big)\Big]
\\
\;\;\;\;\;\;\;\;\;\;\;\;+\mathcal{O}(e^{-\theta|x_1-y_1|})
\\
 & ,\;\;\; y_1\leq x_1\leq 0 \\
 \mathcal{O}(e^{-\theta|x_1-y_1|})
\\
 & ,\;\;\; x_1\leq y_1 \leq 0\\
\end{array}%
\right.
\end{align}

\begin{align}\label{yderi}
\Big(\frac{\partial}{\partial
y_1}\Big)G^{1}_{\lambda,\tilde{\xi}}(x_1,y_1)=
\left\{%
\begin{array}{ll}
 0& ,\;\;\; y_1\leq 0\leq x_1 \\
\Big[e^{\mu_{r+1}^{-} (x_1-y_1)}\\
\;\;\;\;\;\;\;\;\;\; \times
\mathcal{O}\big(|\lambda|+|\tilde{\xi}|\big)\Big]
\\
\;\;\;\;\;\;\;\;\;\;\;\;+\mathcal{O}(e^{-\theta|x_1-y_1|})
\\
 & ,\;\;\; y_1\leq x_1\leq 0 \\
 \mathcal{O}(e^{-\theta|x_1-y_1|})
\\
 & ,\;\;\; x_1\leq y_1 \leq 0\\
\end{array}%
\right.
\end{align}

\begin{align}\label{GGG222}
G^{2}_{\lambda,\tilde{\xi}}(x_1,y_1) &=
 CD^{-1}(\lambda,\tilde{\xi})e^{-\mu_{r+1}^{-}y_1}
\bar{U}'(x_1)(1,0)+\\
&\;\;\mathcal{O}\big(|D^{-1}(\lambda,\tilde{\xi})|
(|\lambda|+|\tilde{\xi}|) e^{-\frac{|x_1|}{C}} e^{ \text{ Re }
\mu^{-}_{r+1}(\lambda,\tilde{\xi})(x_1-y_1)}\big)
\nonumber\\
&\;\;\;\;+ \mathcal{O}(e^{-\theta(x_1-y_1)})\nonumber\\
\nonumber
\end{align}

\begin{align}\label{g2deri}
\Big(\frac{\partial}{\partial
y_1}\Big)G^{2}_{\lambda,\tilde{\xi}}(x_1,y_1) &=
\mathcal{O}\left(|D^{-1}(\lambda,\tilde{\xi})|
(|\lambda|+|\tilde{\xi}|) e^{-\frac{|x_1|}{C}} e^{ \text{ Re }
\mu^{-}_{r+1}(\lambda,\tilde{\xi})(x_1-y_1)}\right)
\end{align}

\end{prop}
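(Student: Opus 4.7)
The plan is to read off the decomposition $G = G^1 + G^2$ directly from the representation formula of the preceding proposition, then insert the low-frequency normal-mode expansions of the two lemmas just above to extract the refined bounds. Specifically, take $G^1$ to collect the $-\sum_k \psi^-_k \tilde\psi^{-*}_k$ and $\sum_k \varphi^-_k \tilde\varphi^{-*}_k$ terms that appear in the representation \emph{without} the coefficients $M^\pm, d^\pm$, and let $G^2$ collect the remaining coefficient-weighted sums. The bounds $|M^+_{jk}|, |d^\pm_{jk}| \le C_1 |D^{-1}|$ are then immediate from Cramer's rule applied to $(\Phi^+ \; \Phi^-)^{-1}\Psi^-$, using the uniform $O(1)$ bounds on $\varphi^\pm_j, \psi^\pm_k$ at $x_1 = 0$ supplied by the normal-mode lemma.

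For the explicit form \eqref{GGG111} of $G^1$, I would substitute $\psi^-_j(x_1) = e^{-\mu^-_j x_1} V^-_j(1 + O(e^{-\tilde\theta|x_1|}))$ and $\tilde\psi^-_j(y_1) = e^{\mu^-_j y_1} \tilde V^-_j(1 + O(e^{-\tilde\theta|y_1|}))$ from the normal-mode lemma into the case-by-case formula. The fast modes $j = 1, \dots, r$ have $\Re\mu^-_j$ bounded away from zero, so their contributions give the $O(e^{-\theta|x_1 - y_1|})$ remainder. The slow mode $j = r+1$ contributes $e^{\mu^-_{r+1}(x_1 - y_1)} V^-_{r+1}(\tilde V^-_{r+1})^*$; inserting $V^-_{r+1} = R^{*-}_1 + O(|\lambda|+|\tilde\xi|)$ with $R^{*-}_1 = (c_1, c_2)^T$, and $(\tilde V^-_{r+1})^* = (1,0) + O(|\lambda|+|\tilde\xi|)$, produces the explicit leading block. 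Estimate \eqref{yderi} then follows, since $\partial_{y_1}$ of the slow exponential produces $-\mu^-_{r+1} = O(|\lambda|+|\tilde\xi|)$, while the fast-mode remainders retain their $e^{-\theta|x_1-y_1|}$ decay.

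For \eqref{GGG222}, the key observation is that $\bar U'(x_1)$ is a decaying solution of the eigenvalue equation at $(\lambda, \tilde\xi) = (0,0)$ lying in both the stable manifold at $+\infty$ and the unstable manifold at $-\infty$; together with $(\mathcal D 2)$, this forces $\varphi^\pm_{r+1}(\cdot; 0, 0)$ to be nonzero scalar multiples of $\bar U'$, and $D$ to vanish simply at the origin. Expanding $(\Phi^+ \; \Phi^-)^{-1}\Psi^-$ by Cramer's rule: the $(r+1, r+1)$ slow-slow entry is $O(1)/D$; any entry mixing one slow and one fast mode gains a factor $O(|\lambda|+|\tilde\xi|)$ from the vanishing of the mixing determinant at the origin (by linear independence of $\bar U'$ from the fast subspaces); and any fast-fast entry, combined with fast $\varphi$ and $\tilde\psi$ decays, contributes to the pure $e^{-\theta(x_1 - y_1)}$ remainder. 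Assembling, using $\varphi^+_{r+1}(x_1) = c \bar U'(x_1) + O((|\lambda|+|\tilde\xi|) e^{-|x_1|/C})$ and $\tilde\psi^-_{r+1}(y_1)^* = e^{-\mu^-_{r+1} y_1}((1,0) + O(|\lambda|+|\tilde\xi|))$, gives the stated leading term $CD^{-1} e^{-\mu^-_{r+1} y_1} \bar U'(x_1)(1,0)$. The bound \eqref{g2deri} then follows because $\bar U'(x_1)(1,0)$ is independent of $y_1$, so $\partial_{y_1}$ annihilates the leading term and gains a factor $\mu^-_{r+1} = O(|\lambda|+|\tilde\xi|)$ from the only surviving $y_1$-dependence.

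The main technical obstacle will be the careful bookkeeping to verify that off-diagonal Cramer cofactors carry either the gain $|\lambda|+|\tilde\xi|$ or the spatial localization $e^{-|x_1|/C}$ claimed in \eqref{GGG222}, rather than merely being $O(1)/D$; this is the step where the spectral assumption $(\mathcal D 2)$ and the vanishing of $\bar U'$ at $\pm \infty$ are used in an essential way. This is the same computation carried out in one dimension in \cite{MZ1}, adapted here to accommodate the complex transverse frequency $\tilde\xi \in \CC^{d-1}$ via the explicit expansions of the previous lemmas. The joint analyticity in $(\lambda, \tilde\xi)$, also supplied by those lemmas through the gap lemma, then guarantees that the expansion is valid on a full complex neighborhood of the origin.
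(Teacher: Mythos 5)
Your proposal is correct and follows essentially the same route as the paper: split $G_{\lambda,\tilde\xi}$ into the pole-free terms and the coefficient-weighted terms of the preceding representation formula, insert the slow/fast normal-mode expansions (with $\varphi_1=\bar U'+O(|\lambda|+|\tilde\xi|)e^{-c|x_1|}$ and $\tilde\psi_{r+1}=[(1,0)+O(|\lambda|+|\tilde\xi|)]e^{-\mu^-_{r+1}y_1}$), and use $\mu^-_{r+1}=O(|\lambda|+|\tilde\xi|)$ to obtain the leading blocks and the extra gain upon $y_1$-differentiation. If anything, your Cramer-cofactor bookkeeping for the entries of $M^+$ mixing slow and fast modes is more explicit than the paper's proof, which simply absorbs the $(j,k)\neq(1,r+1)$ terms into the stated error bounds.
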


\begin{proof}
By proposition 2.6, we decompose $G$ into two parts $G^{1}$ and
$G^{2}$ where $G^{1}$ has no poles and $G^{2}$ has all terms with
poles, respectively. Thus, we have \eqref{deco1} and \eqref{deco2}.
Without loss of generality, we can set
\begin{equation}\label{phi}\varphi_{1}=\bar{U}'(x_1)+\Lambda(x_1,\lambda,\tilde{\xi})e^{-c|x_1|}
\textup{ for } c>0
\end{equation} where $\Lambda(x_1,\lambda,\tilde{\xi})=\mathcal{O}(|\lambda|+|\tilde{\xi}|)$ is
differentiable and exponentially decaying in $x_1$. This is possible
since it solves \eqref{ev}, and is only bounded solution at zero
frequency. On the other hand, we can check by inspection that
\begin{equation}\label{slow}
\tilde{\psi}_{r+1}=[(1,0)+\Theta(y_1,\lambda,\tilde{\xi})]e^{-\mu_{r+1}|y_1|},
\end{equation}
where
$\Theta(y_1,\lambda,\tilde{\xi})=\mathcal{O}(|\lambda|+|\tilde{\xi}|)$
 is differentiable and exponentially decaying in $y_1$. By lemma 3.2,
all fast modes can be written as
\begin{align}\label{faster}
\psi^{-}_{j}&=e^{\mu_j^{-}x_1} V_j^{-}(x_1,\lambda,\tilde{\xi})\\
&=e^{\mu_j^{-}x_1}V^{-}_j(x_1,0,0)+e^{\mu_j^{-}x_1}\Psi(x_1,\lambda,\tilde{\xi})V^{-}_j(x_1,0,0)\nonumber,
\end{align}
where
$\Psi(x_1,\lambda,\tilde{\xi})=\mathcal{O}(|\lambda|+|\tilde{\xi}|)$
 is differentiable and exponentially decaying in $x_1$. Since all
modes are fast except for one slow mode
$\tilde{\psi}_{r+1}$, 
For $y_1\le x_1\le 0$, we have
\begin{align}
\sum_j\psi^{-}_j\tilde{\psi}^{-*}_j&=\psi^{-}_{r+1}\tilde{\psi}^{-*}_{r+1}+\sum_{j\ne 1}\psi^{-}_j\tilde{\psi}^{-*}_j\\
&=e^{\mu_{r+1}^{-}x_1}\big(V^{-}_j(x_1,0,0)+\Psi(x_1,\lambda,\tilde{\xi})V^{-}_j(x_1,0,0)\big)
\nonumber\\
&\;\;\;\;\;\;\;\;\; \times
e^{-\mu^{-}_{r+1}y_1}[(1,0)+\Theta(y_1,\lambda,\tilde{\xi})]+
\sum_{j\ne 1}\psi^{-}_j\tilde{\psi}^{-*}_j\nonumber\\
&= e^{\mu_{r+1}^{-}(x_1-y_1)}
\begin{pmatrix}
c_1(x_1)&0\\
c_2(x_1)&0
\end{pmatrix}\nonumber\\
& \;\;\;+ e^{\mu_{r+1}^{-} (x_1-y_1)}
\Big(\Psi(x_1,\lambda,\tilde{\xi})V^{-}_j(x_1,0,0)(1,0)\nonumber\\
&\;\;\;\;\;\;\;\;\;\;\;\;\; \;\;\;\;\;\;\;\;\;\;\;\;\;\;\;
\;\;\;\;\;\;\;\;\;\;\;\;\;\;+V^{j}(x_1,0,0)\Theta(y_1,\lambda,\tilde{\xi})\Big)\nonumber\\
&\;\;\;\;\;\;\;\;\;\;\;\; +\sum_{j\ne 1}\psi^{-}_j\tilde{\psi}^{-*}_j\nonumber\\
&=e^{\mu_{r+1}^{-}(x_1-y_1)}
\begin{pmatrix}
c_1(x_1) & 0\\
c_2(x_1)& 0\\
\end{pmatrix} \nonumber\\
&\;\;\;+\Big(e^{\mu_{r+1}^{-} (x_1-y_1)} \times
\mathcal{O}\big(|\lambda|+|\tilde{\xi}|\big)\Big)
+\mathcal{O}(e^{-\theta|x_1-y_1|}) \nonumber
\end{align}
using $\eqref{slow}$ and $\eqref{faster}$. For $x_1\leq y_1\le 0$,
since all $\varphi^{-}_j$ are fast modes, we have,
\begin{equation}
\sum_j\varphi^{-}_j(x_1;\lambda,\tilde{\xi})\tilde{\varphi}^{-}_j(y_1;\lambda,\tilde{\xi})^{*}=\mathcal{O}(e^{-\theta|x_1-y_1|}).
\end{equation}
Thus, we have $\eqref{GGG111}$.

It is easy to check $\eqref{yderi}$ by differentiating
$\eqref{GGG111}$ with respect to $y_1$.

 For $G^{2}_{\lambda,\tilde{\xi}}(x_1,y_1)$,  we
have, for $y_1\leq 0\leq x_1$,
\begin{eqnarray}
G^{2}_{\lambda,\tilde{\xi}}(x_1,y_1)&=& \sum_{j,k} M^{+}_{j,k}
\varphi^{+}_j(x_1;\lambda,\tilde{\xi})\tilde{\psi}_k^{-}(y_1;\lambda,\tilde{\xi})^{*}\nonumber\\
&=& CD^{-1} \big(e^{-\mu_{r+1}^{-}y_1} \bar{U}'(x_1)(1,0)\nonumber\\
&&\;\;\;\;\;\;\;+\Psi(x_1,\lambda,\tilde{\xi})(1,0)
e^{-cx_1}e^{-\mu_{r+1}^{-}y_1}
+\bar{U}'\Theta(y_1,\lambda,\tilde{\xi})e^{-\mu^{-}_{r+1}y_1}\nonumber\\
&&\;\;\;\;\;\;\;\;\;\;+\Psi(x_1,\lambda,\tilde{\xi})\Theta(y_1,\lambda,\tilde{\xi})e^{-cx_1}e^{-\mu^{-}_{r+1}y_1}\big)\nonumber\\
&&\;\;\;\;\;\;\;\;\;\;\;\;\; +\sum_{(j,k)\ne(1,r+1)} M^{+}_{j,k}
\varphi^{+}_j(x_1;\lambda,\tilde{\xi})\tilde{\psi}_k^{-}(y_1;\lambda,\tilde{\xi})^{*}\nonumber\\
&=&CD^{-1}(\lambda,\tilde{\xi})e^{-\mu^{-}_{r+1}y_1}\bar{U}'(x_1)(1,0)
\nonumber\\
&&\;\;\;\;+\mathcal{O}(|\lambda|+|\tilde{\xi}|)e^{-\frac{|x_1|}{C}}
e^{ \text{ Re } \mu^{-}_{r+1}(\lambda,\tilde{\xi})(x_1-y_1)}
D^{-1}(\lambda,\tilde{\xi})
\nonumber\\
&&\;\;\;\;\;\;+ \mathcal{O}(e^{-\theta(x_1-y_1)})\nonumber.
\end{eqnarray}
Similarly, for $x_1\leq y_1\leq 0$ and for $y_1\leq x_1\leq 0$,
respectively, we have the same bounds. It is also easy to check
$\eqref{g2deri}$ by differentiating $\eqref{GGG222}$ with respect to
$y_1$.

%
%
%
%
%
%
%
%
\end{proof}

\begin{cor}
There is a neighborhood of $(0,0)$ in $(\lambda,\tilde{\xi})$ space
($\tilde \xi $ now complex) in which, for $y_1\geq 0,$
\begin{align}\label{g1}
G^{1}_{\lambda, \tilde{\xi}}(x_1,y_1)
\begin{pmatrix}
0\\
I\\
\end{pmatrix}=
\left\{%
\begin{array}{ll}
 0& ,\;\;\; y_1\leq 0\leq x_1 \\
\Big[e^{\textup{ Re }\mu_{r+1}^{-} (x_1-y_1)}\\
\;\;\;\;\;\;\;\;\;\; \times
\mathcal{O}\big(|\lambda|+|\tilde{\xi}|\big)\Big]
\\
\;\;\;\;\;\;\;\;\;\;\;\;+\mathcal{O}(e^{-\theta|x_1-y_1|})
\\
 & ,\;\;\; y_1\leq x_1\leq 0 \\
 \mathcal{O}(e^{-\theta|x_1-y_1|})
\\
 & ,\;\;\; x_1\leq y_1 \leq 0\\
\end{array}%
\right.
\end{align}

\begin{align}\label{g2}
\displaystyle G^{2}_{\lambda, \tilde{\xi}}(x_1,y_1)
(0,I)^{t}&= \mathcal{O}\big(|D^{-1}(\lambda,\tilde{\xi})|
(|\lambda|+|\tilde{\xi}|) e^{-\frac{|x_1|}{C}} e^{ \text{ Re }
\mu^{-}_{r+1}(\lambda,\tilde{\xi})(x_1-y_1)}\big)
\end{align}
\end{cor}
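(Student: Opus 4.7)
The corollary will follow directly from the preceding Proposition by right-multiplying the decompositions \eqref{GGG111} and \eqref{GGG222} by the $N\times r$ matrix $(0,I_r)^t$, and observing that the leading-order contributions in both cases are annihilated by this projection onto the $v$-components.

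For $G^1$: In the range $y_1\le x_1\le 0$, the leading-order matrix
$$
e^{\mu_{r+1}^{-}(x_1-y_1)}
\begin{pmatrix}
c_1(x_1) & 0\\
c_2(x_1) & 0
\end{pmatrix}
$$
has its entire second block-column equal to zero, so the product with $(0,I_r)^t$ vanishes identically. What remains is exactly the two error terms in \eqref{GGG111}, giving the claim \eqref{g1}. In the range $x_1\le y_1\le 0$, the bound $\mathcal{O}(e^{-\theta|x_1-y_1|})$ is preserved under right multiplication by the bounded operator $(0,I_r)^t$. In the range $y_1\le 0\le x_1$, $G^1$ is already zero by \eqref{GGG111}.

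For $G^2$: In all three ranges, the leading-order term has the form
$$
CD^{-1}(\lambda,\tilde\xi)\, e^{-\mu_{r+1}^{-}y_1}\,\bar U'(x_1)(1,0),
$$
where $(1,0)=(1,0_{1\times r})$ is a row vector annihilating the $v$-components. Since $(1,0)(0,I_r)^t=0$, this rank-one leading piece is killed by the projection. The subleading $\mathcal{O}\big(|D^{-1}|(|\lambda|+|\tilde\xi|)\,e^{-|x_1|/C}\,e^{\mathrm{Re}\,\mu_{r+1}^{-}(x_1-y_1)}\big)$ term is unchanged (up to constants) under multiplication by $(0,I_r)^t$, yielding the bound \eqref{g2}. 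The residual term $\mathcal{O}(e^{-\theta(x_1-y_1)})$ is absorbed into this bound: by assumption $(\mathcal{D}2)$, $D$ vanishes simply at $(\lambda,\tilde\xi)=(0,0)$, so $|D^{-1}|\sim(|\lambda|+|\tilde\xi|)^{-1}$ in the neighborhood, hence $|D^{-1}|(|\lambda|+|\tilde\xi|)\gtrsim 1$, and the pure exponential $e^{-\theta(x_1-y_1)}$ is dominated by $|D^{-1}|(|\lambda|+|\tilde\xi|)\,e^{-|x_1|/C}\,e^{\mathrm{Re}\,\mu_{r+1}^{-}(x_1-y_1)}$ after adjusting $C,\theta$.

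This is essentially a bookkeeping corollary: no obstacle beyond careful block-matrix multiplication. The content is entirely in the preceding proposition, which identifies the precise form of the leading contributions; the corollary simply records that projecting onto $v$-components (or, dually, restricting the Green function to act on the relaxation variables alone) cancels the worst singularities at $(\lambda,\tilde\xi)=(0,0)$. This observation will be crucial in the later low-frequency bounds, since it shows that the nonlinear source terms entering through the $v$-equation benefit from an extra factor of $|\lambda|+|\tilde\xi|$ compared to those entering through the conservative $u$-equation.
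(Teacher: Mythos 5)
Your proof is correct and follows exactly the paper's (one-line) argument: right-multiply the decompositions \eqref{GGG111} and \eqref{GGG222} by $(0,I_r)^{t}$ and observe that the leading terms, whose only nonzero block-column (resp.\ the rank-one factor $(1,0)$) lies in the $u$-slot, are annihilated. You in fact supply more detail than the paper, which omits the treatment of $G^{2}$ and the absorption of the $\mathcal{O}(e^{-\theta(x_1-y_1)})$ residual; your absorption via $|D^{-1}|(|\lambda|+|\tilde\xi|)\gtrsim 1$ is a legitimate way to reconcile that term with the stated bound \eqref{g2}.
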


\begin{proof}
If we multiply \eqref{GGG111} by $(0,I)^{t}$, then we have the
result.
\end{proof}

\begin{rem}\label{derivrmk}
\textup{ Note that $G^{j}_{\lambda, \tilde{\xi}}(x_1,y_1) (0,I)^{t}$
has almost the same bounds as
$\frac{\partial}{\partial_{y_1}}G^{j}_{\lambda,
\tilde{\xi}}(x_1,y_1)$ for $j=1,2$. This will be important in our
later derivation of pointwise Green function bounds, yielding that
differentiation by $y$ is roughly comparable to right-multiplication
by $(0,I)^{t}$. }
\end{rem}

\subsection{Decomposition of the Green function}$\ $
 For fixed small $\delta_1$, $\theta_1$, $r>0$ to be chosen later, define a
``low-frequency" part $G^{I}$ and a ``high-frequency" part $G^{II}$
of $G$, respectively by
\begin{eqnarray}
&&G^{I}(x,t;y):=\nonumber\\
&&\;\;\;\;\;\;\;\;\;\;\;\; 
\frac{1}{(2\pi i)^{d}}
\int_{|\tilde{\xi}|\leq \delta_1}
\oint_{\Gamma_0}
e^{i\tilde{\xi}\cdot (\tilde{x}-\tilde{y}) +\lambda t} G_{\lambda,\tilde{\xi}}(x_1,y_1)  d\lambda d\tilde{\xi},\nonumber\\
\nonumber
\end{eqnarray}
where  $\Gamma_0=[-\eta-ir,\eta-ir]\cup[\eta-ir,\eta+ir] \cup
[2\eta+ir,-\eta-ir]$, and
\begin{eqnarray}
&&G^{II}(x,t;y):=\nonumber\\
&&\;\;\;\;\;\;\;\;\;\;\;\;\;
\frac{1}{(2\pi i)^{d}}
\textup{P.V.}\int_{-\theta_1-i\infty}^{-\theta_1+i\infty}
\int_{|\tilde \xi|\ge \delta_1 \, \hbox{\rm or }\, |\Im \lambda|\ge
r} e^{i\tilde{\xi}\cdot (\tilde{x}-\tilde{y}) +\lambda t}
G_{\lambda,\tilde{\xi}}(x_1,y_1) d\tilde{\xi} d\lambda.
\nonumber\\
\nonumber \end{eqnarray}

Then, by the spectral resolution formula \eqref{spectral} together
with Cauchy's Theorem, we have a decomposition formula for
$G(x,t;y)$ of
\begin{equation}
G(x,t;y)=G^{I}(x,t;y)+G^{II}(x,t;y).
\end{equation}

\section{Low-frequency estimates}\label{LF}
We estimate the low frequency part of the Green function following
\cite{HoZ2}.

\subsection{Pointwise bounds}
The stationary phase approximation
$$
U\sim \bar U'(x_1)\delta(\tilde x,t)
$$
of \eqref{sp} can be expressed alternatively (recalling that the
low-frequency part of $G$ generally determines large-time behavior)
as
\begin{equation}
G^{I}(x,t;y)\sim \bar U'(x_1)([u]^{-1},0)g(\tilde x - \tilde y,t),
\label{5.20a}
\end{equation}
where $g(\tilde x,t)$ denotes the Green function for the
constant-coefficient $(d-1)$-dimensional equation \eqref{conv-diff}
approximately governing normal deformation of the front. Using the
analysis of \cite{HoZ1} together with the low-frequency description
of the resolvent kernel in Proposition \ref{Gdecomp}, we now
establish the following pointwise description of the low-frequency
part of the Green function $G^{I}$, sharpening the formal prediction
of \eqref{5.20a}.

\begin{prop}\label{Gdecomp}
 Under the assumptions of Theorem \ref{main},
for $y_1\ge 0$, $|\alpha|\le 1$, and some $\eta$, $M>0$,
\begin{align}\label{g1description}
D_y^\alpha G^{I}&= \chi_{\{|x_1-y_1| \le |a_1^+t|\}} \chi_{\{0\ge
y_1+a_1^+t\}} D_y^\alpha \bar U'(x_1)([u]^{-1},0)
\bar g^+(\tilde x,t; y)\\
&\hspace{10mm} + \chi_{ \{x_1\ge 0\}} D_y^\alpha K^+(x,t;y)
+R^+_\alpha,\nonumber
\end{align}
where
\begin{equation}\label{bargreens}
\displaystyle \bar g^+(\tilde x, t;y):= c_{\tilde{\beta}}t^{-\frac
{d-1}{2}} e^{ -\frac{ (\tilde x - \tilde y - { \tilde
\alpha}^+(y_1,t)t)^t {\bar \beta}^{-1}_+ (\tilde x - \tilde y -
{\tilde \alpha}^+(y_1,t)t)} {4t} },
\end{equation}
and
\begin{equation}\label{K}
K^+(x,t;y):= c_{\mathcal{B}^{*}_+} t^{-\frac{d}{2}} e^{
-\frac{(x-y-a^+t)^t \mathcal{B}_+^{*-1}(x-y-a^+t) } {4t}}
\end{equation}
are $(d-1)$- and $d$-dimensional convected heat kernels,
respectively, with
\begin{equation}
{ \tilde \alpha}^+:= (1- \frac{|y_1|}{|a_1^+t|}) \bar {\tilde a} +
\frac{|y_1|}{|a_1^+t|}\tilde a^+, \label{alpha}
\end{equation}
\begin{equation}
\bar \beta_+:= (1- \frac{|y_1|}{|a_1^+t|}) \tilde{\beta}
+\frac{|y_1|}{|a_1^+t|} \big( b^{*}_{11, +}\bar B_+^{*} +
\frac{b^{*}_{11, +}}{|a_1^+|^2} (\tilde a^+-\bar {\tilde a} -
a_1^+b^{* t}) (\tilde a^+-\bar {\tilde a} - a_1^+b^{* t})^t \big),
\label{barbeta}
\end{equation}
\begin{equation}
\bar B^{*}:= B^{*}- b^{*}b^{* t}, \label{barB}
\end{equation}
where $\tilde a^+$ is given by $a^+=:(a_1^+,\tilde
a^+)=(df_{1}^{*}(u_+),df_{2}^{*}(u_+),...df_{d}^{*}(u_+))$, $b^{*}$,
$B^{*}$, $\mathcal{B}^{*}$, $\bar {\tilde a}$, and $\tilde{\beta}$
are as in \eqref{BBstar} and \eqref{abars},
and the (faster decaying)
residual terms $R_\alpha^+$
and $\Theta_\alpha^+$ satisfy
\begin{align}\label{R}
 |R^+_\alpha|&\le
\chi_{\{|x_1- y_1| \le |a_1^+t|\}} t^{-\frac{d-1}{2} -
\frac{|\alpha|}{2}}((1+t)^{-\frac{1}{2}}+ \alpha_1 e^{-\eta|y_1|})
%
e^{-\eta|x_1|}e^{-\frac{|\tilde x -\tilde y -\tilde a t|^2}{Mt}} \\
&\hspace{10mm}+ C e^{-\frac{|x -y -a^+ t|^2}{Mt}}
t^{-\frac{d-1}{2}-\frac{|\alpha|}{2}}e^{-\eta|x_1|}\nonumber\\
&\hspace{10mm}+ C e^{-\frac{|x -y -a^+ t|^2}{Mt}}
t^{-\frac{d}{2}-\frac{|\alpha|}{2}}((1+t)^{-\frac{1}{2}}
\chi_{\{x_1\ge 0\}}+ e^{-\eta t})\nonumber\\
&\hspace{10mm} +C\chi_{\{|\tilde x- \tilde y| \ge M(t+|x_1-y_1|)\}}
\frac{ e^{-\eta_1(|x_1-y_1|+t)}}{\Pi_{j=2}^d (1+|x_j-y_j|)}.
  \nonumber
\end{align}
A symmetric description holds for $y_1\le 0$.
\end{prop}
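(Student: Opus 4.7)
The plan is to compute the contour integral defining $G^I$ by substituting the low-frequency decomposition $G_{\lambda,\tilde\xi}=G^1_{\lambda,\tilde\xi}+G^2_{\lambda,\tilde\xi}$ from the symmetric ($y_1\ge 0$) analogue of Proposition 2.9, and evaluating each piece by a combination of residue calculus in $\lambda$ and stationary-phase/Gaussian evaluation in $\tilde\xi$. The main contribution, the shock-tracking term, comes from the pole of $G^2$; the heat-kernel term $K^+$ comes from the smooth contribution of $G^1$ and the non-pole part of $G^2$; the remaining error bounds in $R^+_\alpha$ come from tracking the $\mathcal{O}(|\lambda|+|\tilde\xi|)$ prefactors and non-dominant exponential factors in Proposition 2.9.

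First I would handle $G^2$. By $(\mathcal{D}2)$ and the description $G^2_{\lambda,\tilde\xi}\sim CD^{-1}(\lambda,\tilde\xi)e^{-\mu^+_{r+1}y_1}\bar U'(x_1)(1,0)+\cdots$, closing the $\lambda$-contour to the left of $\Gamma_0$ and collecting the residue at $\lambda=\lambda_*(\tilde\xi)$ yields a contribution
\[
\bar U'(x_1)([u]^{-1},0)\,\frac{1}{(2\pi)^{d-1}}\int_{|\tilde\xi|\le\delta_1} e^{i\tilde\xi\cdot(\tilde x-\tilde y)+\lambda_*(\tilde\xi)t-\mu^+_{r+1}(\lambda_*,\tilde\xi)y_1}\,d\tilde\xi,
\]
the prefactor $[u]^{-1}$ arising from $(d/d\lambda)D(0,0)^{-1}$. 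Substituting the expansions \eqref{series1} for $\lambda_*$ and \eqref{mr+1} for $\mu^+_{r+1}$ into the exponent, the quadratic form in $\tilde\xi$ gives the effective diffusion matrix $\bar\beta_+$ of \eqref{barbeta}, and the linear term yields the $y_1$-dependent convection velocity $\tilde\alpha^+(y_1,t)$ of \eqref{alpha} interpolating between $\bar{\tilde a}$ (at $y_1=0$) and $\tilde a^+$ (at $y_1=-a_1^+t$); a Gaussian evaluation then produces exactly $\bar g^+(\tilde x,t;y)$. The cutoffs $\chi_{\{|x_1-y_1|\le|a_1^+t|\}}\chi_{\{0\ge y_1+a_1^+t\}}$ arise from truncating to the regime where the saddle of the phase $\lambda_*(\tilde\xi)t-\mu^+_{r+1}(\lambda_*,\tilde\xi)y_1+i\tilde\xi\cdot(\tilde x-\tilde y)$ is real and inside $B(0,\delta_1)$; outside this regime one deforms the $\tilde\xi$-contour into complex $\tilde\xi$ (using the analytic extension) to obtain the exponentially small/rapidly decaying contributions that go into $R^+_\alpha$, as in \cite{HoZ2}.

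Next I would handle $G^1$. Since $G^1$ has no pole, the $\lambda$-integral can be shifted to $\Re\lambda=-\eta$; the remaining $(\tilde\xi,\lambda)$-integral has the factor $e^{\mu^+_{r+1}(x_1-y_1)}e^{\lambda t}$, and saddle-point analysis in $\lambda$ localizes to the slow mode's dispersion surface, producing the $d$-dimensional convected heat kernel $K^+(x,t;y)$ of \eqref{K} with diffusion $\mathcal B^*_+$ (this accounts for the $(1,1)$-block contribution of the full parabolic symbol from Appendix A). The restriction $\chi_{\{x_1\ge 0\}}$ appears because $G^1$ is supported on the same side as $y_1\ge 0$. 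The non-saddle part and the exponentially decaying $\mathcal O(e^{-\theta|x_1-y_1|})$ remainders from \eqref{GGG111} give again terms absorbed in $R^+_\alpha$. Derivatives $D_y^\alpha$ with $|\alpha|\le 1$ are handled by: (i) tangential $\partial_{y_j}$ ($j\ge 2$) pulling down $i\tilde\xi_j$, giving one extra $t^{-1/2}$ factor in the Gaussian; (ii) the normal derivative $\partial_{y_1}$ handled via Corollary 2.10/Remark 2.11, which shows $\partial_{y_1}G^j_{\lambda,\tilde\xi}$ satisfies the same bounds as $G^j_{\lambda,\tilde\xi}(0,I)^t$ with the improved $(|\lambda|+|\tilde\xi|)$ factor, yielding the $\alpha_1 e^{-\eta|y_1|}$ improvement on the shock-tracking term.

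The main obstacle is the stationary-phase analysis with the $y_1$-dependent exponent $-\mu^+_{r+1}(\lambda_*,\tilde\xi)y_1$: since $\mu^+_{r+1}$ depends nontrivially on $\tilde\xi$ through its expansion \eqref{mr+1}, one must carefully combine the quadratic forms from $\lambda_*t$ and $\mu^+_{r+1}y_1$ to obtain a single quadratic form in $\tilde\xi$ whose inverse is precisely $\bar\beta_+(y_1,t)$, smoothly interpolating between the pure transverse diffusion $\tilde\beta$ (at $y_1=0$) and the full parabolic diffusion induced by the slow mode (at $y_1=-a_1^+t$). The algebraic identity \eqref{barbeta} must be checked by grouping the quadratic-in-$\tilde\xi$ contributions from $\lambda_*t-\mu^+_{r+1}(\lambda_*,\tilde\xi)y_1$ and completing the square with respect to the phase $i\tilde\xi\cdot(\tilde x-\tilde y)$, which is the principal technical computation. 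The far-field bound involving the product $\prod_{j=2}^d(1+|x_j-y_j|)^{-1}$ is obtained by a separate repeated-integration-by-parts argument in the tangential directions, exactly as in the viscous analysis of \cite{HoZ1,HoZ2}.
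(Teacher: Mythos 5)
Your proposal is correct and follows essentially the same route as the paper, which itself disposes of this proposition by observing that the low-frequency resolvent description of Proposition 2.9 matches (up to additional fast-decaying $O(e^{-\theta|x_1-y_1|})$ terms) the viscous case of \cite{HoZ2} and then citing the stationary-phase arguments of \cite{HoZ2}, Sections 3--5, verbatim; your outline is a faithful reconstruction of what that cited argument does (residue at $\lambda_*(\tilde\xi)$ for the shock-tracking term, Gaussian/saddle evaluation for $\bar g^+$ and $K^+$, complex $\tilde\xi$-deformation via Lemmas 3.2--3.3 for the far-field and residual bounds, and the $(0,I)^t$ comparison of Remark 2.11 for $\partial_{y_1}$). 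The only minor quibble is that the product bound $\prod_{j=2}^d(1+|x_j-y_j|)^{-1}$ is obtained in the paper (Lemma 3.3) by iterated contour deformation in the components of $\tilde\xi$ rather than by integration by parts, but this does not affect the validity of your argument.
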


Before proving Proposition \ref{Gdecomp}, we first establish the
following lemma, a simplified version of Proposition 2.8 in
\cite{HoZ2}, allowing us to vary $\tilde \xi$ in the complex plane.
Following \cite{HoZ2}, denote $\tilde \xi\in \CC$ by
\begin{equation}\label{xicomplex}
\tilde \xi=\xi_1+i\xi_2.
\end{equation}

\begin{lem}\label{move}
Under the assumptions of Theorem \ref{main}, for some $\delta_2$,
$\eta_1>0$ sufficiently small, $|\tilde x-\tilde y|\le
M(t+|x_1-y_1|)$, and any $|\tilde \xi_2^*|\le \delta_2$,
\begin{equation}\label{cGI}
\begin{aligned}
G^{I}(x,t;y)&= 
\frac{1}{(2\pi i)^{d}}
\int_{|\tilde{\xi_1}|\leq \delta_1, \, \tilde
\xi_2=\tilde \xi_2^*} \oint_{\Gamma_0} e^{i\tilde{\xi}\cdot
(\tilde{x}-\tilde{y}) +\lambda t}
G_{\lambda,\tilde{\xi}}(x_1,y_1)  d\lambda d\tilde{\xi}\\
&\quad + O(e^{-\eta_1(|x-y|+t)}).
\end{aligned}
\end{equation}
\end{lem}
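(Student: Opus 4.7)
The strategy is to apply Cauchy's theorem in the complex variable $\tilde \xi$ to deform the $(d-1)$-dimensional contour from $\{|\tilde \xi|\le \delta_1,\ \tilde \xi\in\RR^{d-1}\}$ to its translate $\{|\Re \tilde \xi|\le \delta_1,\ \Im \tilde \xi=\tilde \xi_2^*\}$. Two facts underlie this plan. First, by the gap lemma construction and the normal-mode lemma above, $G_{\lambda,\tilde \xi}(x_1,y_1)$ extends analytically in $\tilde \xi$ to a complex polydisk $\{|\tilde \xi|<r\}$ (with $\delta_1,\delta_2<r$), so that
\[
J(\tilde \xi)\;:=\;\oint_{\Gamma_0} e^{\lambda t}\,G_{\lambda,\tilde \xi}(x_1,y_1)\,d\lambda
\]
is itself analytic in $\tilde \xi$ on this polydisk. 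Second, by the residue theorem, $J(\tilde \xi)$ reduces to the residue of $e^{\lambda t}\,G_{\lambda,\tilde \xi}$ at $\lambda=\lambda_*(\tilde \xi)$, the unique zero of $D(\cdot,\tilde \xi)$ in the interior of $\Gamma_0$ by $(\mathcal D1)$--$(\mathcal D3)$; moreover, by $(\mathcal D3)$ combined with the expansion \eqref{mr+1}, $\Re\lambda_*(\tilde \xi)\le -\theta\delta_1^2/2$ whenever $|\Re\tilde\xi|\ge\delta_1/2$ and $|\Im\tilde\xi|\le\delta_2$ is small enough.

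Given these, I apply Cauchy's theorem to the analytic function $F(\tilde\xi):=e^{i\tilde\xi\cdot(\tilde x-\tilde y)}J(\tilde\xi)$ componentwise in the $d-1$ complex variables $\tilde\xi_j$, deforming each real segment $\tilde\xi_j\in[-\delta_1,\delta_1]$ to its translate by $i\tilde\xi_{2,j}^*$. The induced error is a finite sum of boundary contributions supported on side faces of the form $\{|\Re\tilde\xi_j|=\delta_1,\ \Im\tilde\xi_j\in[0,\tilde\xi_{2,j}^*]\}$, with the remaining coordinates integrated over the current stage of the homotopy.

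To estimate each face integral I combine three pointwise bounds: (i) $|e^{\lambda_*(\tilde\xi)t}|\le e^{-\eta_0 t}$ with $\eta_0:=\theta\delta_1^2/2$; (ii) uniform boundedness and exponential spatial structure of the residue coefficient on the compact boundary set, inherited from the explicit formulas for $\Phi^\pm$, $\tilde\Psi^\pm$, and $D^{-1}$ established earlier, and in particular the $\bar U'(x_1)$ decay; and (iii) $|e^{i\tilde\xi\cdot(\tilde x-\tilde y)}|\le e^{\delta_2|\tilde x-\tilde y|}\le e^{\delta_2 M(t+|x_1-y_1|)}$, valid under the standing hypothesis. Choosing $\delta_2$ sufficiently small relative to $\eta_0$ and $M$ (e.g. $\delta_2 M\le \eta_0/4$), each face integral is pointwise $O(e^{-\eta_1(t+|x_1-y_1|)})$ for some $\eta_1>0$.

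The principal obstacle is converting this $t+|x_1-y_1|$ decay into the stated $|x-y|+t$ decay, since without additional input $|\tilde x-\tilde y|$ could be arbitrarily large. Here the central-region hypothesis $|\tilde x-\tilde y|\le M(t+|x_1-y_1|)$ is essential: it implies $|x-y|\le (1+M)(t+|x_1-y_1|)$, so that $e^{-\eta_1(t+|x_1-y_1|)}\le e^{-\eta_1(|x-y|+t)/(1+M)}$, yielding the claimed bound after a slight decrease of $\eta_1$. A subsidiary but routine issue is the simultaneous choice of $\delta_1,\delta_2,r$ so that analyticity on the polydisk, the spectral inequality $\Re\lambda_*(\tilde\xi)\le-\theta\delta_1^2/2$, and the balance $\delta_2 M\ll \theta\delta_1^2$ all hold; this three-way calibration is standard, cf.\ the analogous Proposition 2.8 of \cite{HoZ2}.
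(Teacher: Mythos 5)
Your overall architecture is the same as the paper's: deform the real $\tilde\xi$-contour to its complex translate by Cauchy's theorem, estimate the resulting side-face integrals, and use the standing hypothesis $|\tilde x-\tilde y|\le M(t+|x_1-y_1|)$ to upgrade $e^{-\eta_1(t+|x_1-y_1|)}$ decay to $e^{-\eta_1(t+|x-y|)}$ decay. That last conversion, and the calibration $\delta_2 M\ll\eta_0$, are handled correctly. However, the way you estimate the face integrals contains a genuine gap, in two respects. First, $\Gamma_0=[-\eta-ir,\eta-ir]\cup[\eta-ir,\eta+ir]\cup[\eta+ir,-\eta+ir]$ is an \emph{open} arc from $-\eta-ir$ to $-\eta+ir$ (it must splice with the vertical contour $\Re\lambda=-\theta_1$ used in $G^{II}$ so that $G^I+G^{II}$ reconstructs the full inverse Laplace transform); the $\oint$ notation is misleading. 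Consequently the residue theorem does not reduce $J(\tilde\xi)$ to the residue at $\lambda_*(\tilde\xi)$. The correct move on the face $|\tilde\xi_1|=\delta_1$ is the opposite of a residue computation: by ($\mathcal D1$) and continuity there are \emph{no} zeros of the Evans function with $\Re\lambda\ge-\theta$ there, so $\Gamma_0$ may be deformed to the straight segment $[-\eta-ir,-\eta+ir]$ with no residue contribution at all, and on that segment $|e^{\lambda t}|=e^{-\eta t}$ supplies the time decay. Your alternative source of time decay, $\Re\lambda_*\le-\theta\delta_1^2/2$, is also logically off target: if $\eta$ is taken small (as it is, $\eta\ll\delta_1$), the point $\lambda_*(\tilde\xi)$ need not lie in the region bounded by $\Gamma_0$ and the segment at all, and what matters is its absence, not its location.

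Second, and more seriously, your item (ii) does not actually deliver the factor $e^{-\eta_1|x_1-y_1|}$ that the conclusion requires. The residue (or near-pole) structure of $G_{\lambda,\tilde\xi}$ is a product $\varphi(x_1)\tilde\psi(y_1)^*$ in which $\varphi\approx\bar U'(x_1)$ decays in $x_1$ alone, while the $y_1$-dependence enters through the slow dual mode $e^{-\mu^-_{r+1}(\lambda,\tilde\xi)y_1}$ with $|\Re\mu^-_{r+1}|=O(|\lambda|+|\tilde\xi|)=O(\delta_1)$; a product of the form $e^{-\theta|x_1|}e^{O(\delta_1)|y_1|}$ is not bounded by $Ce^{-\eta_1|x_1-y_1|}$. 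The two-sided spatial decay must instead come from the uniform intermediate-frequency resolvent bound $|G_{\lambda,\tilde\xi}(x_1,y_1)|\le Ce^{-\theta_2|x_1-y_1|}$ of Remark \ref{genbd}, which is available on the shifted segment precisely because $(\lambda,\tilde\xi)$ there is bounded away from both the origin and the Evans-function zeros; one then takes $\delta_2\ll\theta_2,\eta$ so the phase factor $e^{\delta_2|\tilde x-\tilde y|}\le e^{\delta_2 M(t+|x_1-y_1|)}$ eats only half of each exponent. With these two repairs your argument coincides with the paper's proof.
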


\begin{proof}
By Cauchy's Theorem, it is equivalent to show
\begin{equation}\label{equiv}
\int_{|\tilde{\xi_1}|=\delta_1, \, |\tilde \xi_2|\le \delta_2}
\oint_{\Gamma_0} e^{i\tilde{\xi}\cdot (\tilde{x}-\tilde{y}) +\lambda
t} G_{\lambda,\tilde{\xi}}(x_1,y_1)  d\lambda d\tilde{\xi}=
 O(e^{-\eta_1(|x-y|+t)}).
\end{equation}
By assumption ($\mathcal{D}1$) and continuity, there are no zeros of
the Evans function for $|\tilde \xi_1|=\delta_1$ and $\Re \lambda
\ge -\theta$, some $\theta>0$. Thus, by Cauchy's Theorem again,
\eqref{equiv} is equivalent in turn to
\begin{equation}\label{equiv2}
\int_{|\tilde{\xi_1}|=\delta_1, \, |\tilde \xi_2|\le \delta_2}
\int_{-\eta-ir}^{-\eta + ir} e^{i\tilde{\xi}\cdot
(\tilde{x}-\tilde{y}) +\lambda t} G_{\lambda,\tilde{\xi}}(x_1,y_1)
d\lambda d\tilde{\xi}=
 O(e^{-\eta_1(|x-y|+t)}).
\end{equation}

Taking $\eta<<\delta_1$, we have on the domain of integration by our
previous bounds (see for example Remark \ref{genbd}) that
$$
|G_{\lambda,\tilde{\xi}}(x_1,y_1)|\le Ce^{-\theta_2|x_1-y_1|}
$$
for $\theta_2>0$. Taking $\delta_2<<\theta_2$, $\eta$, we thus have
$$
|e^{i\tilde{\xi}\cdot (\tilde{x}-\tilde{y}) +\lambda t}
G_{\lambda,\tilde{\xi}}(x_1,y_1)|\le Ce^{-\theta_2|x_1-y_1|} \le
Ce^{-\eta t/2}e^{-\theta_2|x_1-y_1|/2},
$$
yielding the result for $\eta_1:=(1/2)\min\{\theta_2, \eta\}$.
\end{proof}

We have also the following weakened version of Proposition 2.7,
\cite{HoZ2}.

\begin{lem}\label{small}
Under the assumptions of Theorem \ref{main}, for $M$ sufficiently
large and $|\tilde x-\tilde y|\ge M(t+|x_1-y_1|)$,
\begin{equation}\label{smallGI}
|G^{I}(x,t;y)|\le C\frac{ e^{-\eta_1(|x_1-y_1|+t)}}{ \Pi_{j=2}^d
(1+|x_j-y_j|)}.
\end{equation}
\end{lem}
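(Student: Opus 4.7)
The plan is to adapt the complex contour shift of Lemma~\ref{move} to the present large-transverse-distance regime, now rotating the $\tilde{\xi}$-contour into the direction parallel to $\tilde{x}-\tilde{y}$ so that the oscillatory factor $e^{i\tilde{\xi}\cdot(\tilde{x}-\tilde{y})}$ becomes genuine exponential decay in $|\tilde{x}-\tilde{y}|$. Concretely, set $\omega := (\tilde{x}-\tilde{y})/|\tilde{x}-\tilde{y}|$ and, in the notation \eqref{xicomplex}, let $\xi_2^\ast := \delta_2\,\omega$ for a small $\delta_2 > 0$. By the analytic extension of the normal modes and $G_{\lambda,\tilde{\xi}}$ to complex $\tilde{\xi}$ from Section~\ref{subLF}, combined with Cauchy's theorem, I can decompose $G^I = G^I_\ast + E$, where $G^I_\ast$ is the integral over the shifted contour $\{|\xi_1|\le\delta_1,\,\xi_2=\xi_2^\ast\}\times \Gamma_0$ and $E$ is the lateral correction over $\{|\xi_1|=\delta_1,\,\xi_2=\tau\xi_2^\ast,\,\tau\in[0,1]\}\times\Gamma_0$.

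On the shifted main contour $|e^{i\tilde{\xi}\cdot(\tilde{x}-\tilde{y})}| = e^{-\delta_2|\tilde{x}-\tilde{y}|}$; the continued pole $\lambda_\ast(\tilde\xi)$ stays strictly inside $\Gamma_0$ for $\delta_2$ small, with $|e^{\lambda_\ast(\tilde\xi)t}| \le e^{C\delta_2 t}$, and the residue -- whose leading term $CD^{-1}e^{-\mu_{r+1}^- y_1}\bar U'(x_1)(1,0)$ from Section~\ref{subLF} already exhibits exponential decay in $|x_1|$ -- is uniformly bounded in $\tilde\xi$. This yields $|G^I_\ast|\le C\,e^{C\delta_2 t}\,e^{-\delta_2|\tilde{x}-\tilde{y}|}$. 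For the lateral correction, $(\mathcal{D}1)$ and continuity guarantee the Evans function has no zeros on $\{|\xi_1|=\delta_1\}\times\{\Re\lambda\ge-\theta\}$, so $\Gamma_0$ may be deformed to the line $\Re\lambda=-\eta$; combined with the resolvent bound $|G_{\lambda,\tilde\xi}|\le Ce^{-\theta|x_1-y_1|}$ of Remark~\ref{genbd} and the bound $|e^{i\tilde\xi\cdot(\tilde x - \tilde y)}|\le 1$ on the lateral path, this gives $|E|\le C\,e^{-\eta t}\,e^{-\theta|x_1-y_1|}$.

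To finish, I invoke $|\tilde{x}-\tilde{y}|\ge M(t+|x_1-y_1|)$ with $M$ so large that $\delta_2 M/2 > 2C\delta_2$; this absorbs the $e^{C\delta_2 t}$ in $G^I_\ast$, and I split
\[
e^{-\delta_2|\tilde{x}-\tilde{y}|/2} \le e^{-\eta_1(t+|x_1-y_1|)}\cdot e^{-\delta_2|\tilde{x}-\tilde{y}|/4}
\]
with $\eta_1:=\delta_2 M/8$, then bound the second factor by $C\big/\prod_{j=2}^d(1+|x_j-y_j|)$ via the elementary fact that exponential decay in $|\tilde x-\tilde y|$ dominates any polynomial decay in its components. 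The lateral bound on $E$ already satisfies the claim since $\prod_{j=2}^d(1+|x_j-y_j|)^{-1}\le 1$. The main obstacle is the analyticity bookkeeping in the second paragraph: one must verify that the pole $\lambda_\ast(\tilde\xi)$ remains strictly inside $\Gamma_0$ under the complex shift and that the residue structure retains constants uniform in the direction $\omega\in\SSS^{d-2}$, both of which follow from the normal-mode continuation to a fixed complex neighborhood of $\tilde\xi=0$ provided $\delta_2$ is chosen small.
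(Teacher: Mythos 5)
Your treatment of the shifted main contour $G^I_\ast$ is essentially the paper's: on $\xi_2=\delta_2\omega$ one gets $Ce^{-\delta_2|\tilde x-\tilde y|}e^{C(t+|x_1-y_1|)}$, and since $|\tilde x-\tilde y|\ge M(t+|x_1-y_1|)$ with $M$ large this is $O(e^{-(\delta_2/2)(t+|x-y|)})$, which indeed dominates the target bound. The problem is your lateral correction $E$.

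You bound $|E|\le Ce^{-\eta t}e^{-\theta|x_1-y_1|}$ and then assert this "already satisfies the claim since $\prod_{j=2}^d(1+|x_j-y_j|)^{-1}\le 1$." That inequality goes the wrong way: the product sits in the \emph{denominator} of the target bound \eqref{smallGI}, so it makes the right-hand side \emph{smaller}, and a bound on $E$ with no transverse decay at all cannot be absorbed into it. To conclude you would need $|E|\cdot\prod_{j=2}^d(1+|x_j-y_j|)\le Ce^{-\eta_1(t+|x_1-y_1|)}$, i.e.\ the lateral term must itself carry $d-1$ factors of algebraic decay in the transverse separations. This is exactly where the paper's proof does its real work: the lateral piece of the deformation in a single coordinate carries the damping $e^{-z|x_j-y_j|}$, $z\in[0,\delta_2]$, and integrating over $z$ produces one factor $C/(1+|x_j-y_j|)$; for $d\ge 3$ the paper then proceeds by induction, moving one component of $\tilde\xi_2$ at a time (starting with the largest $|x_j-y_j|$), each step contributing one such factor times a family of lower-dimensional integrals of the same form. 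Note also that even a corrected version of your single-shot shift in the direction $\omega$ would only yield one factor $C/(1+|\tilde x-\tilde y|)$ from the lateral surface, not the full product $\prod_{j=2}^d(1+|x_j-y_j|)^{-1}$, which for comparable components is of order $|\tilde x-\tilde y|^{-(d-1)}$. So the coordinate-by-coordinate induction is not cosmetic; it is needed to obtain the stated bound for $d\ge 3$.
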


\begin{proof}
We first consider the simplest case of dimension $d=2$. Moving
$\tilde \xi_2^*$ from $0$ to $c:=\delta_2 (\tilde x-\tilde
y)/|\tilde x-\tilde y|$ in
$$
\int_{|\tilde{\xi_1}|\leq \delta_1, \, \tilde \xi_2=\tilde \xi_2^*}
\oint_{\Gamma_0} e^{i\tilde{\xi}\cdot (\tilde{x}-\tilde{y}) +\lambda
t} G_{\lambda,\tilde{\xi}}(x_1,y_1)  d\lambda d\tilde{\xi},
$$
$\delta_2>0$ fixed, by the argument of Lemma \ref{move} yields a
change of order
$$
e^{-\eta(t+|x_1-y_1|)}\int_0^{\delta_2}e^{-z|\tilde x-\tilde y|}dz
\le \frac{Ce^{-\eta(t+|x_1-y_1})}{1+|\tilde x-\tilde y|}.
$$
On the other hand,
$$
\begin{aligned}
\int_{|\tilde{\xi_1}|\leq \delta_1, \, \tilde \xi_2=c}
\oint_{\Gamma_0} e^{i\tilde{\xi}\cdot (\tilde{x}-\tilde{y}) +\lambda
t} G_{\lambda,\tilde{\xi}}(x_1,y_1)  d\lambda d\tilde{\xi}
&\le Ce^{-\delta_2 |\tilde x-\tilde y|} e^{C_1(t+ |x_1-y_1|)}\\
&\le Ce^{-( \delta_2/2)(t+ |x-y|)},
\end{aligned}
$$
since by assumption $|\tilde x-\tilde y|>>(t+|x_1-y_1|)$. This
completes the proof for dimension $d=2$.

For dimensions $d>2$, we proceed by induction, moving one component
of $\tilde \xi_2$ at a time, starting with the component for which
$|x_j-y_j|$ is largest, without loss of generality, $j=2$.  At the
first step, then, this yields a change of order
$$
e^{-\eta(t+|x_1-y_1|)}\int_0^{\delta_2}e^{-z|x_2-y_2|}dz \le
\frac{Ce^{-\eta(t+|x_1-y_1})}{1+|x_2-y_2|}
$$
times the maximum of a family of $(d-2)$-dimensional integrals in
$(\xi_3,\dots, \xi_d)$ of similar form to the original one, plus a
new integral of negligible order $O(e^{-( \delta_2/2)(t+ |x-y|)})$.
Moving the next component similarly, in each of the family of
$(d-2)$-dimensional integrals, yields a factor
$$
\frac{C}{1+|x_3-y_3|}
$$
times the maximum of a family of $(d-3)$-dimensional integrals of
similar form, plus a new integral of smaller order $O(e^{-\eta
|x_3-y_3|})$ times another $(d-3)$-dimensional integral of similar
form.  Continuing this process, we obtain the result.
\end{proof}

\begin{rem}\label{inefficient}
\textup{ For $|\tilde x-\tilde y|\ge Mt$, $M$ sufficiently large, we
have $G\equiv 0$ by finite speed of propagation. Thus,
\eqref{smallGI} reflects a certain inefficiency in our splitting
scheme. Note that the righthand side is time-exponentially decaying
in $L^p$, $p>1$, whereas usual error terms $O(e^{-\eta(t+ |x-y|)})$
are time-exponentially decaying in all $L^p$ norms; thus, for
practical purposes it is almost but not quite optimal. For the
present analysis, in which we consider only $2\le p\le \infty$, it
is harmless. }
\end{rem}

\begin{proof}[Proof of Proposition \ref{Gdecomp}]
With our preparations, this follows now by exactly the argument used
in \cite{HoZ2} to establish the corresponding bounds on the full
solution operator for $|x-y|\ge Mt$, $M>>1$. 
(Note: what is actually
estimated for this regime in \cite{HoZ2} is the low-frequency part,
with the rest shown to be negligible.)

Specifically, we note that the description of the resolvent kernel
in Proposition \ref{G1} agrees with that for the viscous case in
Proposition 2.5, \cite{HoZ2} in the sense that the principal terms
are identical, with the rest consisting of fast-decaying
($O(e^{-\theta|x_1-y_1|})$) terms leading to negligible errors.  The
only difference in the relaxation case is that there are more (at
most $r$) of the latter terms than in the viscous case (at most
$1$).

Then, the rest of the proof goes word for word as in the arguments
of \cite{HoZ2}, Sections 3, 4, and 5, based on careful stationary
phase estimates of the various terms. For this (complicated)
argument, we refer to \cite{HoZ2}.
\end{proof}

\subsection{$L^1\to L^p$ bounds}
From Proposition \ref{Gdecomp}, we readily obtain the following
bounds on the solution operator.

\begin{lem}\label{Lqp}
For $t\geq1$ and $\alpha$ is a multi-index with $|\alpha|\leq1$,
there holds
\begin{equation}\label{pwbds}
\big|D_x^{\alpha}G^{I}(x,t;y)\big|_{L^{p}(x)}\leq C(p)
t^{((d-1)/2)(1-1/p)-|\alpha|/2}
\end{equation}
for all $p>1$, with $C\to \infty$ as $p\to 1$.
 Moreover, for $f\in L^{1}$ and $p>1$, there holds
\begin{equation}\label{g11}
\Big|\int G^{I}(x,t;y)f(y) dy\Big|_{L^{p}(x)} \leq
C(p)(1+t)^{-((d-1)/2)(1-1/p)}|f|_{L^{1}}
\end{equation}
and
\begin{equation}\label{g22}
\Big|\int G^{I}_{y_j}(x,t;y)f(y) dy\Big|_{L^{p}(x)} \leq
C(p)(1+t)^{-((d-1)/2)(1-1/p)-1/2}|f|_{L^{1}}.
\end{equation}
\end{lem}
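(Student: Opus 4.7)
The plan is to reduce the convolution bounds \eqref{g11} and \eqref{g22} to the uniform-in-$y$ pointwise $L^p$ estimate \eqref{pwbds} by Minkowski's integral inequality, and to establish \eqref{pwbds} by inserting the explicit decomposition of Proposition \ref{Gdecomp} and computing $L^p(x)$-norms of each piece using standard Gaussian $L^p$ estimates.

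For the pointwise bound \eqref{pwbds}, I would substitute the decomposition
\[
G^I = \chi\,\bar U'(x_1)([u]^{-1},0)\,\bar g^+(\tilde x, t; y) + \chi\,K^+(x,t;y) + R_\alpha^+
\]
from Proposition \ref{Gdecomp} and take $|\cdot|_{L^p(x)}$ of each term separately. For the principal piece, $L^p(x)$ splits as a tensor product: $\bar U'$ (and any $x_1$-derivative thereof) is exponentially localized, so $|\bar U'|_{L^p(x_1)}$ is a finite constant, while the $(d-1)$-dimensional Gaussian $\bar g^+(\,\cdot\, - \tilde y - \tilde\alpha^+ t, t)$ has $L^p(\tilde x)$-norm $\lesssim t^{-((d-1)/2)(1-1/p)}$ by direct computation, with each $\tilde x$-derivative producing an extra factor $t^{-1/2}$. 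This yields the claimed scaling $t^{-((d-1)/2)(1-1/p)-|\alpha|/2}$. The term $K^+$ is a full $d$-dimensional convected heat kernel whose $L^p(x)$-norm decays at the strictly faster rate $t^{-(d/2)(1-1/p)-|\alpha|/2}$, hence is absorbed. The three lines of the residual \eqref{R} are in turn: a $(d-1)$-dimensional Gaussian in $\tilde x$ times $e^{-\eta|x_1|}$, matching the principal rate up to an extra factor $(1+t)^{-1/2}$ (which in particular compensates any $D_{x_1}$ that falls on $\bar U'$ rather than on a Gaussian); full $d$-dimensional Gaussians times $e^{-\eta|x_1|}$ or $e^{-\eta t}$, strictly faster; and a final piece, supported outside $\{|\tilde x-\tilde y|\le M(t+|x_1-y_1|)\}$, whose $L^p(x)$-norm is time-exponentially small uniformly in $y$ because its envelope is translation-invariant in $x-y$.

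For \eqref{g11} and \eqref{g22}, apply Minkowski's integral inequality (generalized Young):
\[
\Big|\int G^I(x,t;y)\,f(y)\,dy\Big|_{L^p(x)} \le \int \big|G^I(\cdot,t;y)\big|_{L^p(x)}\,|f(y)|\,dy \le \Big(\sup_y\big|G^I(\cdot,t;y)\big|_{L^p(x)}\Big)\,|f|_{L^1},
\]
and identically with $G^I_{y_j}$ in place of $G^I$. Inserting \eqref{pwbds} with $|\alpha|=0$, respectively $|\alpha|=1$, gives the results; the replacement of $t^{-\sigma}$ by $(1+t)^{-\sigma}$ is trivial for $t\le 1$ by absorbing the pointwise bound into the constant, and the blowup $C(p)\to\infty$ as $p\to 1$ reflects the standard degeneration of the Gaussian $L^p$ norm at $p=1$.

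The main technical nuisance is the per-term bookkeeping in \eqref{R}, and in particular verifying uniformity in $y$ for the final piece supported on $\{|\tilde x - \tilde y|\ge M(t+|x_1-y_1|)\}$: one observes that its envelope is a translation-invariant function of $x-y$, so the $L^p(x)$-norm is bounded uniformly in $y$ by $Ce^{-\eta_1 t/2}$, which is absorbed into the polynomial decay of the principal piece. The other mildly delicate point is distinguishing $D_{x_1}$ from $D_{\tilde x}$ derivatives on the principal term: $D_{x_1}$ yields $\bar U''(x_1)$ with no $t$-gain, but this is precisely offset by the $(1+t)^{-1/2}$ factor on the first residual line of \eqref{R}, recovering the full $t^{-|\alpha|/2}$ rate claimed.
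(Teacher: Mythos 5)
Your proposal is correct and follows essentially the same route as the paper, whose (very terse) proof likewise inserts the decomposition of Proposition \ref{Gdecomp}, bounds $|\bar U' D_x^\alpha \bar g^+|_{L^p}$, $|D_x^\alpha K^+|_{L^p}$, $|R_\alpha^+|_{L^p}$ via the $L^p$ norm of the $(d-1)$-dimensional heat kernel, and then deduces \eqref{g11}--\eqref{g22} from the pointwise bound by the triangle (Minkowski) inequality; your write-up simply supplies the per-term bookkeeping the paper omits. One small correction: the restriction $p>1$ and the blowup $C(p)\to\infty$ as $p\to 1$ do not come from the Gaussian (whose $L^1$ norm is $1$) but from the last residual line of \eqref{R}, whose transverse factor $\Pi_{j=2}^d(1+|x_j-y_j|)^{-1}$ lies in $L^p$ only for $p>1$ (cf.\ Remark \ref{inefficient}) --- a term you do otherwise handle correctly.
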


\begin{proof}
By calculating the $L^{p}$ norm of the $(d-1)$-dimensional heat
kernel, together with the expression of $\eqref{g1description}$, we
have
\begin{align}
|D^\alpha_x G^{I}|_{L^{p}(x)}&\leq C\big(|\bar U '(x_1)
D_x^{\alpha} \bar{g}^+|_{L^{p}}+ |D^\alpha_x \bar K^+|_{L^{p}}+|R^+_\alpha|_{L^{p}} \big)\\
&\leq C t^{((d-1)/2)(1-1/p)-|\alpha|/2} +C(p)e^{-\eta t},
\nonumber\\
\nonumber
\end{align}
$\eta>0$. The inequalities $\eqref{g11}$ and $\eqref{g22}$ follows
from $\eqref{pwbds}$ and the triangle inequality.
%
\end{proof}

\begin{lem}
For $p>1$ and $C(p)$ as in Lemma \ref{Lqp},
\begin{equation}\label{fastpart}
\Big| \int G^{I}(x,t,y)(0,I_r)^{t}f(y)dy\Big|_{L^{p}}\leq
C(p)(1+t)^{-(d-1)/2((1-1/p)-1/2)}|f|_{L^{1}}.
\end{equation}
\end{lem}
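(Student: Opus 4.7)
The plan is to show that right-multiplication of $G^I$ by the projection $(0,I_r)^t$ onto the relaxation variables produces a pointwise description of exactly the same form as the $y_1$-derivative description \eqref{g1description}--\eqref{R} with $|\alpha|=1$. Granted this, the desired $L^1 \to L^p$ estimate follows by the same computation that produced \eqref{g22} in Lemma \ref{Lqp}, the factor $t^{-1/2}$ now coming from the projection rather than the differentiation.

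The key observation, already made at the resolvent level in Remark \ref{derivrmk}, is that the bounds (g1) and (g2) on $G^j_{\lambda,\tilde\xi}(0,I_r)^t$ coincide with the bounds (yderi) and (g2deri) on $\partial_{y_1} G^j_{\lambda,\tilde\xi}$. The mechanism is that the slow dual mode has leading form $\tilde\psi_{r+1} \sim [(1,0)+\Theta] e^{-\mu_{r+1}^-|y_1|}$ by \eqref{slow}, so its contraction against $(0,I_r)^t$ kills the principal $(1,0)$ piece and leaves only an $O(|\lambda|+|\tilde\xi|)$ remainder -- precisely the factor that appears when $\partial_{y_1}$ brings down $\mu_{r+1}^- = O(|\lambda|+|\tilde\xi|)$. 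The fast-mode contributions decay like $e^{-\theta|x_1-y_1|}$ in either case.

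I would then propagate this resolvent-level gain through the contour deformations of Lemmas \ref{move} and \ref{small} and the stationary-phase analysis of Proposition \ref{Gdecomp}; all of these steps are linear in $G_{\lambda,\tilde\xi}$ and are unaffected by the right-multiplication. In the resulting pointwise bound for $G^I(x,t;y)(0,I_r)^t$, the would-be leading term $\bar U'(x_1)([u]^{-1},0)\,\bar g^+ \cdot (0,I_r)^t$ vanishes identically because $([u]^{-1},0)(0,I_r)^t = 0$, and one is left with residual terms that decay at the improved rate $t^{-1/2}$ relative to the undifferentiated case, matching \eqref{R} at $|\alpha|=1$. Computing $L^p$ norms of the remaining convected heat kernels exactly as in the proof of Lemma \ref{Lqp} yields
\begin{equation*}
\big|G^I(\cdot,t;y)(0,I_r)^t\big|_{L^p(x)} \le C(p)\, t^{((d-1)/2)(1-1/p)-1/2} + C(p)e^{-\eta t},
\end{equation*}
after which the claimed bound follows from the triangle inequality applied to the $y$-convolution against $f\in L^1$.

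The only mild obstacle is bookkeeping: one must check that the cancellation against the slow mode holds cleanly in every piece of the decomposition $G^I = \bar U'(x_1)([u]^{-1},0)\bar g^+ + K^+ + R^+_0$, including the diffusive kernel $K^+$ (which, like the leading term, is built from the equilibrium-direction projection and so is also annihilated by $(0,I_r)^t$ to leading order). Since this structural cancellation is already encoded in (g1) and (g2), no new spectral analysis is needed beyond careful tracking of the $(0,I_r)^t$ factor through the stationary-phase argument of \cite{HoZ2}.
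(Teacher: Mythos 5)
Your proposal is correct and follows essentially the same route as the paper: the paper's proof likewise rests on the observation (Remark \ref{derivrmk} and the bounds \eqref{g1}--\eqref{g2}) that right-multiplication of the resolvent kernel by $(0,I_r)^{t}$ obeys the same bounds as $\partial_{y_1}G_{\lambda,\tilde\xi}$, so that $G^{I}(x,t;y)(0,I_r)^{t}$ inherits the bounds of $G^{I}_{y_1}$ and the conclusion follows from \eqref{g22}. Your write-up in fact supplies more of the mechanism (the cancellation of the $(1,0)$ component of the slow dual mode against $(0,I_r)^{t}$) than the paper's own three-line argument.
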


\begin{proof}
By $\eqref{g1}$ and $\eqref{g2}$, we know that $G^{2}_{\lambda,
\tilde{\xi}}(x_1,y_1) (0,I)^{t}$ has the same bound as
$\frac{\partial}{\partial_{y_1}}G^{j}_{\lambda,
\tilde{\xi}}(x_1,y_1)$. Thus, $\Big| \int
G^{I}(x,t,y)(0,I_r)^{t}f(y)dy\Big|_{L^{p}}$ has the same bound as
$\Big|\int G^{I}_{y_j}(x,t;y)f(y) dy\Big|_{L^{p}(x)}$. By
\eqref{g22}, we have the result.
\end{proof}

\begin{lem}
The low frequency part of Green function $G^{I}(x,t;y)$ associated
with $(\partial_t-L)$ satisfies
\begin{align}\label{Gpig}
&\Big| \int \big(G^{I}(x,t;y)-\bar{U}'(x_1)\Pi^{t}(x_1) g(\tilde{x}-\tilde{y},t)\big)f(y) dy\Big|_{L^{p}(x)}\\
&\leq C t^{-((d-1)/2)(1-1/p)-1/2}\Big(\big|x_1f(x)\big|_{L^{1}(x)} +
\big|f(x)\big|_{L^{1}(x)}\Big)\nonumber
\end{align}
for all $t\geq 0$, where $\Pi(x_1)$ is the left zero eigenvector
dual to the right zero eigenvector $\bar{U}'(x_1)$ at
$\tilde{\xi}=0$
\end{lem}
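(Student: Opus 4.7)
The proof will proceed from the pointwise description of $G^{I}$ in Proposition \ref{Gdecomp}. First, I would use the decomposition with $\alpha=0$ and $y_{1}\geq 0$ (the case $y_{1}\leq 0$ being symmetric), which expresses $G^{I}$ as a sum of a leading piece $\chi\,\bar{U}'(x_{1})([u]^{-1},0)\,\bar{g}^{+}(\tilde{x},t;y)$, the transport kernel $\chi_{\{x_{1}\geq 0\}}K^{+}$, and a residual $R_{0}^{+}$. The key observation is that $\tilde{\alpha}^{+}(0,t)=\bar{\tilde{a}}$ and $\bar{\beta}_{+}(0,t)=\tilde{\beta}$, so that $\bar{g}^{+}$ evaluated at $y_{1}=0$ coincides with the transverse heat kernel $g(\tilde{x}-\tilde{y},t)$. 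Moreover, to leading order $\Pi^{t}(x_{1})=([u]^{-1},0)$ along the profile, the correction being exponentially localized in $x_{1}$ by the gap/conjugation structure of the zero-eigenfunction.

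The central step is a Taylor expansion in $y_{1}$: writing
\begin{equation*}
\bar{g}^{+}(\tilde{x},t;y)-g(\tilde{x}-\tilde{y},t)=\int_{0}^{y_{1}}\partial_{s}\bar{g}^{+}(\tilde{x},t;s,\tilde{y})\,ds,
\end{equation*}
the explicit formulas \eqref{alpha}--\eqref{barbeta} show that $\partial_{y_1}\tilde{\alpha}^{+}$ and $\partial_{y_1}\bar{\beta}_{+}$ are $O(1/t)$, so that $\partial_{y_{1}}\bar{g}^{+}$ appears as an $O(1/t)$ multiple of first- and second-order derivatives in $\tilde{x}$ of a $(d-1)$-dimensional Gaussian. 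Using $|\partial_{\tilde{x}}g|_{L^{p}(\tilde{x})}\sim t^{-1/2}|g|_{L^{p}(\tilde{x})}$ and $|\partial^{2}_{\tilde{x}}g|_{L^{p}(\tilde{x})}\sim t^{-1}|g|_{L^{p}(\tilde{x})}$, I would obtain the pointwise (in $y$) $L^p(\tilde{x})$ bound
\begin{equation*}
\big|\bar{g}^{+}(\cdot,t;y)-g(\cdot-\tilde{y},t)\big|_{L^{p}(\tilde{x})}\leq C|y_{1}|\,t^{-1/2}\,t^{-\frac{d-1}{2}(1-1/p)}.
\end{equation*}
Combining this with the $L^{\infty}\cap L^{p}$ bound on $\bar{U}'(x_{1})\Pi^{t}(x_{1})$ from exponential decay of the profile, and applying Minkowski's integral inequality after convolving against $f(y)$, yields the $t^{-(d-1)(1-1/p)/2-1/2}|x_{1}f|_{L^{1}}$ contribution.

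Finally, I would bound the contributions from $K^{+}$, the residual $R_{0}^{+}$, and the $x_{1}$-dependent discrepancy $\Pi^{t}(x_{1})-([u]^{-1},0)$ against $|f|_{L^{1}}$. Here the explicit $(1+t)^{-1/2}$ factor appearing in the first piece of \eqref{R}, the $e^{-\eta|x_{1}|}$ localization present in the other pieces of \eqref{R} (which effectively turns a $d$-dimensional Gaussian into a $(d-1)$-dimensional one times an $L^1$ weight in $x_1$), the improvement $t^{-d/2(1-1/p)}$ versus $t^{-(d-1)/2(1-1/p)}$ for the $d$-dimensional pieces once paired with exponential $x_1$-localization, and the moving support of $K^{+}$ away from the profile together deliver the extra $t^{-1/2}$ decay needed. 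The main obstacle is precisely this last step: carefully tracking the several residual pieces (in particular, the transport-type $K^{+}$ and the fast-mode $\mathcal{O}(e^{-\theta|x_1-y_1|})$ pieces which are new to the relaxation setting and do not appear in \cite{HoZ2}), and verifying that in each case the available structure produces the required $t^{-1/2}$ improvement over the principal heat-kernel rate. Modulo this bookkeeping, the argument closely parallels that of Sections~3--5 of \cite{HoZ2}.
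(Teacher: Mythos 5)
Your proposal is correct and follows essentially the same route as the paper: compare $G^{I}$ at $(y_1,\tilde y)$ to its value at $(0,\tilde y)$ via a Taylor expansion in $y_1$ (producing the $t^{-1/2}|x_1 f|_{L^1}$ term), then identify $G^{I}(x,t;0,\tilde y)$ with $\bar U'(x_1)\Pi^t(x_1)g(\tilde x-\tilde y,t)$ up to faster-decaying residuals using Proposition \ref{Gdecomp} and the fact that $\tilde\alpha^+(0,t)=\bar{\tilde a}$, $\bar\beta_+(0,t)=\tilde\beta$. The only organizational difference is that the paper invokes the ready-made bound $\sup_y|\partial_{y_1}G^{I}(x,t;y)|_{L^p(x)}\le Ct^{-((d-1)/2)(1-1/p)-1/2}$ from Lemma \ref{Lqp} to handle the $y_1$-variation of all terms at once, whereas you differentiate the principal Gaussian $\bar g^+$ by hand through \eqref{alpha}--\eqref{barbeta} and defer the residual bookkeeping (which the paper likewise leaves implicit).
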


\begin{proof}
By \eqref{g1description}, we have
\begin{align}\label{g1ubarpi}
&|G^{I}(x,t;0,\tilde{y})-\bar{U}'(x_1)\Pi^{t}(x_1)
g(\tilde{x}-\tilde{y},t)\big)f(y) dy|_{L^{p}(x)}\\
&\leq C t^{-((d-1)/2)(1-1/p)-1/2}.\nonumber
\end{align}
Then using \eqref{g1ubarpi}, we have
\begin{align}\label{ggg}
&\Big|\int G^{I}(x,t;0,\tilde{y})-\bar{U}'(x_1)\Pi^{t}(x_1) g(\tilde{x}-\tilde{y},t)\big)f(y) dy\Big|_{L^{p}(x)}\\
& \leq |\int G^{I}(x,t;0,\tilde{y})-\bar{U}'(x_1)\Pi^{t}(x_1) g(\tilde{x}-\tilde{y},t)\big)f(y) dy|_{L^{p}(x)}|f|_{L^{1}(x)}\nonumber\\
& \leq C t^{-((d-1)/2)(1-1/p)-1/2}.\nonumber
\end{align}
On the other hand, we have
\begin{align}\label{difference}
&\Big|\int G^{I}(x,t;y_1,\tilde{y})-G^{I}(x,t;0,\tilde{y})\big)f(y) dy\Big|_{L^{p}(x)}\\
&\;\;\;\;\;\;\;\;=\Big| \int \int_0^{1} \partial_{y_1} G^{I}(x,t;\theta y_1,\tilde{y}) y_1 f(y) d\theta dy\Big|_{L^{p}(x)}\nonumber\\
&\;\;\;\;\;\;\;\;\leq \sup_y |\partial_{y_1}G^{I}(x,t;y)|_{L^{p}(x)}|x_1f(x)|_{L^{p}(x)}\nonumber\\
&\;\;\;\;\;\;\;\;\leq t^{-((d-1)/2)(1-1/p)-1/2}|x_1f(x)|_{L^{p}(x)}.\nonumber\\
\nonumber
\end{align}
Combining \eqref{ggg} and \eqref{difference} together with the
triangle inequality, we have the result.
\end{proof}

\section{Damping and high frequency estimates}\label{sec:damping}
We now carry out the main new work of the paper, establishing
high-derivative and high-frequency bounds by energy estimates
following the approach introduced in \cite{Z4} for the
hyperbolic-parabolic case.

We denote $U=(u,v)^{t}$ for our convenience. Let $\tilde{U}$ be a
solution of \eqref{main} and $\bar{U}$ be a traveling wave solution
of \eqref{main}. Define the nonlinear perturbation
$U(x,t):=\tilde{U}(x,t)-\bar{\bar{U}}(x,t)$ where
$\bar{\bar{U}}(x,t)=\bar{U}(x_1-\delta(\tilde{x},t))$. We also
denote $\tilde{A}=A(\tilde{U})$, $\bar{A}=A(\bar{U})$,
$d\tilde{Q}=dQ(\tilde{U})$ and $d\bar{Q}=dQ(\bar{U})$.
\begin{lem}
Multi-dimensional scalar relaxation equations \eqref{main} can be
put in a quasilinear form as follows: \label{quasi1}
\begin{align}
& U_t+\sum_j\tilde{A}^{j}U_{x_j}-d\tilde{Q} U\\
&=(\partial_t-L)\delta(\tilde{x},t)\bar{U}'(x_1)-R_{x_1}+(0,I_r)^{t}N(U,U)-M=:f\nonumber
\end{align}
where
\begin{equation}
M=\sum_{j=1}^{d}(\tilde{A}^{j}-\bar{\bar{A}}^{j})\bar{\bar{U}}_{x_j}=O(|U||\bar{U}'||\nabla_{\tilde{x}}\delta
|),
\end{equation}
\begin{equation}
R_{x_1}=O(|\delta_t||\bar{U}'||\delta|+ |\nabla_{\tilde{x}}\delta |
|\bar{U}'|
|\delta|)_{x_1}=O(|\delta|(|\delta_t|+|\nabla_{\tilde{x}}\delta|)|\bar{U}''|)
\end{equation}
and \begin{equation} N(U,U)=O(|U|^{2}).
\end{equation}
\end{lem}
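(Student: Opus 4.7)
The plan is to derive the equation for $U := \tilde{U} - \bar{\bar{U}}$ by subtracting the quasilinear equation satisfied by the shifted profile $\bar{\bar{U}}(x,t) = \bar{U}(x_1 - \delta(\tilde{x},t))$ from the equation for $\tilde{U}$. Since $\delta$ depends only on $(\tilde{x}, t)$, a direct computation gives
\begin{equation*}
\bar{\bar{U}}_t = -\delta_t \bar{U}'(x_1-\delta), \quad \bar{\bar{U}}_{x_1} = \bar{U}'(x_1-\delta), \quad \bar{\bar{U}}_{x_j} = -\delta_{x_j} \bar{U}'(x_1-\delta) \ (j \ge 2).
\end{equation*}
Working in the shock frame so that the profile satisfies $\bar{A}^1 \bar{U}' = Q(\bar{U})$, evaluating this identity at $x_1 - \delta$ yields $\bar{\bar{A}}^1 \bar{U}'(x_1-\delta) = Q(\bar{\bar{U}})$, and hence
\begin{equation*}
\bar{\bar{U}}_t + \sum_j \bar{\bar{A}}^j \bar{\bar{U}}_{x_j} = Q(\bar{\bar{U}}) - \delta_t \bar{U}'(x_1-\delta) - \sum_{j\ge 2}\delta_{x_j}\bar{\bar{A}}^j \bar{U}'(x_1-\delta).
\end{equation*}

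Subtracting from $\tilde{U}_t + \sum_j \tilde{A}^j \tilde{U}_{x_j} = Q(\tilde{U})$, and splitting $\tilde{A}^j \bar{\bar{U}}_{x_j} = \bar{\bar{A}}^j \bar{\bar{U}}_{x_j} + (\tilde{A}^j - \bar{\bar{A}}^j)\bar{\bar{U}}_{x_j}$, produces
\begin{equation*}
U_t + \sum_j \tilde{A}^j U_{x_j} = Q(\tilde{U}) - Q(\bar{\bar{U}}) + \delta_t \bar{U}'(x_1-\delta) + \sum_{j\ge 2}\delta_{x_j}\bar{\bar{A}}^j \bar{U}'(x_1-\delta) - M,
\end{equation*}
with $M := \sum_j(\tilde{A}^j - \bar{\bar{A}}^j)\bar{\bar{U}}_{x_j}$. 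A first-order Taylor expansion of $Q$ around $\tilde{U}$ rewrites $Q(\tilde{U}) - Q(\bar{\bar{U}}) - d\tilde{Q}\, U = (0, I_r)^t N(U,U)$ with $N(U,U) = O(|U|^2)$, the first block vanishing because $Q = (0,q)^t$. This moves $d\tilde{Q}\, U$ to the left side and produces the claimed quadratic source.

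The key step is to recognize the remaining linear forcing as $(\partial_t - L)(\delta\, \bar{U}'(x_1))$ modulo an $x_1$-divergence. Using $LW = -\sum_j(\bar{A}^j W)_{x_j} + d\bar{Q}\, W$ together with the profile identity $(\bar{A}^1 \bar{U}')' = d\bar{Q}\, \bar{U}'$ (obtained by $\partial_{x_1}$ of $\bar{A}^1 \bar{U}' = Q(\bar{U})$), a short calculation gives
\begin{equation*}
(\partial_t - L)\bigl[\delta\, \bar{U}'(x_1)\bigr] = \delta_t \bar{U}'(x_1) + \sum_{j\ge 2}\delta_{x_j}\bar{A}^j \bar{U}'(x_1).
\end{equation*}
The mismatch between our actual forcing and this target is $\delta_t[\bar{U}'(x_1-\delta) - \bar{U}'(x_1)] + \sum_{j\ge 2}\delta_{x_j}[\bar{\bar{A}}^j \bar{U}'(x_1-\delta) - \bar{A}^j \bar{U}'(x_1)]$; because $\delta$, $\delta_t$, $\delta_{x_j}$ are independent of $x_1$ while the bracketed quantities are exact $\partial_{x_1}$-derivatives of $\bar{U}(x_1-\delta) - \bar{U}(x_1)$ and $F^j(\bar{U}(x_1-\delta)) - F^j(\bar{U}(x_1))$ respectively, this mismatch takes the form $-R_{x_1}$ with
\begin{equation*}
R := -\delta_t\bigl[\bar{U}(x_1-\delta) - \bar{U}(x_1)\bigr] - \sum_{j\ge 2}\delta_{x_j}\bigl[F^j(\bar{U}(x_1-\delta)) - F^j(\bar{U}(x_1))\bigr].
\end{equation*}
Taylor expansion in $\delta$ then yields the bounds $R = O(|\delta|(|\delta_t| + |\nabla_{\tilde{x}}\delta|)|\bar{U}'|)$ and the stated bound on $R_{x_1}$.

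There is no substantive obstacle here: the lemma is a bookkeeping exercise preparing the forcing $f$ for the damping estimates to follow. The only conceptual point worth emphasizing is the exploitation of $x_1$-independence of $\delta$ to recast the discrepancy between shifted and unshifted profile quantities as an exact $x_1$-divergence, which is precisely what permits its absorption into $R_{x_1}$. The size bounds on $M$, $R_{x_1}$, and $N(U,U)$ then follow by straightforward Taylor expansion of the relevant differences.
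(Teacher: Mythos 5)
Your proposal is correct and follows essentially the same route as the paper: compute the residual of the shifted profile $\bar{\bar{U}}$ in the equation, use the profile ODE to eliminate the $x_1$-flux and source terms, identify the remaining forcing as $(\partial_t-L)[\delta\,\bar U'(x_1)]$ plus an exact $x_1$-derivative remainder $R_{x_1}$ coming from evaluating at $x_1-\delta$ versus $x_1$, then subtract from the equation for $\tilde U$ and Taylor-expand to produce $M$ and $N$. Your explicit identification of the primitive $R$ is just a cleaner rendering of the paper's $\big|_{x_1-\delta}^{x_1}$ bookkeeping, so there is no substantive difference.
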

\begin{proof}

 Consider the multi-dimensional scalar relaxation equations.

 \begin{equation}
\begin{pmatrix}
u\\
v
\end{pmatrix}_t
+\sum_{j=1}^{d}
\begin{pmatrix}
f^{j}(u,v)\\
g^{j}(u,v)
\end{pmatrix}_{x_j}
=
\begin{pmatrix}
0\\q(u,v)
\end{pmatrix}
\end{equation}
For $\bar{\bar{U}}$, there holds
\begin{align}\label{doublebar}
&\bar{\bar{U}}_t + \sum_j \begin{pmatrix}
f^{j}(\bar{\bar{U}})\\
g^{j}(\bar{\bar{U}})
\end{pmatrix}_{x_j}-\begin{pmatrix}
o\\q(\bar{\bar{U}})
\end{pmatrix}{}\\
&=-\delta_t \bar{U}'\big|_{x_1-\delta}+\sum_{j\neq
1}(-\delta_{x_j})\begin{pmatrix} d f^{j}(\bar{U})\\d g^{j}(\bar{U})
\end{pmatrix}\bar{U}'\Big|_{x_1-\delta}+\underbrace{\begin{pmatrix}
f^{1}(\bar{\bar{U}})\\
g^{1}(\bar{\bar{U}})
\end{pmatrix}_{x_1}-\begin{pmatrix}
0\\q(\bar{\bar{U}})\end{pmatrix}}_{=0}\nonumber\\
&=-\delta_t \bar{U}'(x_1)+\sum_{j\neq
1}(-\delta_{x_j})\begin{pmatrix} d f^{j}(\bar{U}(x_1))\\d
g^{j}(\bar{U}(x_1))
\end{pmatrix}\bar{U}'(x_1)+\underbrace{\begin{pmatrix}
f^{1}(\bar{U})\\
g^{1}(\bar{U})
\end{pmatrix}_{x_1}-\begin{pmatrix}
0\\q(\bar{U})\end{pmatrix}}_{=0} -R_{x_1} \nonumber\\
&=-(\partial_t-L)\delta(\tilde{x},t) \bar{U}'(x_1)-R_{x_1},\nonumber
\end{align}

 where
 \begin{equation}
 R_{x_1}=-\delta_t
\bar{U}'\big|_{x_1-\delta}^{x_1}+\sum_{j\neq
1}(-\delta_{x_j})\begin{pmatrix} d f^{j}(\bar{U})\\d g^{j}(\bar{U})
\end{pmatrix}\bar{U}'\Big|_{x_1-\delta}^{x_1}.
\end{equation}

The last equality is true since the stationary shock wave solution
$\bar{U}'(x_1)$ satisfies

\begin{align}
0&=\begin{pmatrix}
\begin{pmatrix}
f^{1}(\bar{U})\\
g^{1}(\bar{U})
\end{pmatrix}_{x_1}-\begin{pmatrix}
0\\q(\bar{U})\end{pmatrix}
\end{pmatrix}_{x_1} \nonumber\\
&= \begin{pmatrix}
\begin{pmatrix}
df^{1}(\bar{U}(x_1))\\dg^{1}(\bar{U}(x_1))
\end{pmatrix}\bar{U}'(x_1)
\end{pmatrix}_{x_1}-\begin{pmatrix}
0\\dq(\bar{U}(x_1))
\end{pmatrix} \bar{U}'(x_1).
\end{align}

The last equation is obtained by multiplying the shock wave
equaition by $\delta$ and adding to the second last equation.

Let $\tilde{U}$ be a solution of
\begin{equation}\label{main1}
\tilde{U}_t +\sum_j
\begin{pmatrix}
f^{j}(\tilde{U})\\
g^{j}(\tilde{U})
\end{pmatrix}_{x_j} = \begin{pmatrix}
0\\
q(\tilde{U})
\end{pmatrix}.
\end{equation}
After subtracting \eqref{doublebar} from \eqref{main1}, we have the
perturbation equation for $U$


\begin{align}
&U_t+\sum_{j=1}^{d}\big(A^{j}(\tilde{U})\tilde{U}_{x_j}-A^{j}(\bar{\bar{U}})\bar{\bar{U}}_{x_j}\big)
-\big(Q(\tilde{U})-Q(\bar{\bar{U}})\big)\\
&=U_t+\sum_{j=1}^{d}\big(F^{j}(\tilde{U})-F^{j}(\bar{\bar{U}})\big)_{x_j}-
\big(Q(\tilde{U})-Q(\bar{\bar{U}})\big)\nonumber\\
&=(\partial_t-L)\delta\bar{U}'-R_{x_1}.\nonumber
\end{align}
Keeping a quasilinear form on the left hand side, we have
\begin{align}\label{quasi}
&U_t+\sum_j\tilde{A}^{j}U_{x_j}-d\tilde{Q}U\\
&=(\partial_t-L)\delta(\tilde{x},t)\bar{U}'(x_1)-R_{x_1}+(0,I_r)^{t}N(U,U)-\sum_{j=1}^{d}(\tilde{A}^{j}-\bar{\bar{A}}^{j})\bar{\bar{U}}_{x_j}\nonumber\\
&=(\partial_t-L)\delta(\tilde{x},t)\bar{U}'(x_1)-R_{x_1}+(0,I_r)^{t}N(U,U)-M=:f\nonumber
\end{align}
where
\begin{equation}
M=\sum_{j=1}^{d}(\tilde{A}^{j}-\bar{\bar{A}}^{j})\bar{\bar{U}}_{x_j}=O(|U||\bar{U}'||\nabla_{\tilde{x}}\delta
|),
\end{equation}
\begin{equation}
R_{x_1}=O(|\delta_t||\bar{U}'||\delta|+ |\nabla_{\tilde{x}}\delta |
|\bar{U}'|
|\delta|)_{x_1}=O(|\delta|(|\delta_t|+|\nabla_{\tilde{x}}\delta|)|\bar{U}''|)
\end{equation} and
\begin{equation}
N(U,U)=O(|U|^{2})
\end{equation} since
$\tilde{A}^{j}-\bar{\bar{A}}^{j}=A^{j}(\tilde{U})-A^{j}(\bar{\bar{U}})=\int_0^{1}
dA^{j}\big(\bar{\bar{U}}+\theta(\tilde{U}-\bar{\bar{U}})\big)U
d\theta=O(U)$ and $\bar{\bar{U}}_{x_j}=-\bar{U}'\delta_{x_j}$.
\end{proof}

\begin{ass}
\text{The operator $K(\partial_x)$ is defined by }
\begin{equation}
\widehat{K(\partial_x)f}(\xi)=i \bar{K}(\xi) \widehat{f}(\xi)
\end{equation}
\text{where $\bar{K}(\xi)$ is a skew-symmetric operator which are smooth}\\
\text{and homogeneous degree one in $\xi$ satisfying}

\begin{equation}\label{kawashimacondition}
\Re \sigma\big(|\xi|^{2}A^{0}dQ-\sum_{j=1}^{d}\xi_j
\bar{K}(\xi)A^{j}\big)_{\pm} \leq -\theta |\xi|^{2} \textup{   for all }\xi \text{ in } \mathbb{R}^{d}\\
\end{equation}
\end{ass}

\begin{rem}
\textup{
This is a standard assumption of Kawashima which is satisfied when
\eqref{abb} is simultaneously symmetrizable and satisfies
genuine-coupling condition. (See \cite{Yo} and \cite{Ze}.)
}
\end{rem}

\begin{prop}[Damping estimate]\label{4.4}
If $ |U|_{H^{s}}(t)\leq \varepsilon$ sufficiently small for $0\leq
t\leq T$ where $s\geq[\frac{d}{2}]+2$, then, for some
$\tilde{\theta}>0$, there holds
\begin{equation}\label{pro}
|U|_{H^{s}}^{2}(t)\leq e^{-\theta t}|U|_{H^{s}}^{2}(0)
+C\int_0^{t}e^{-\theta(t-s)}\big(|U|_{L^{2}}^{2}(s)+\epsilon(s)\big)ds
\end{equation} for $0\leq t\leq T$, where
\begin{equation}
\epsilon(t)\leq C |\nabla_{t,\tilde{x}}\delta|_{L^{2}}^{2}=
C\zeta_0^{2} (1+t)^{-\frac{d-1}{2}-1}.
\end{equation}
\end{prop}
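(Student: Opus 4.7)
The plan is to carry out a Kawashima--Shizuta-type energy estimate adapted to the variable-coefficient shock-perturbation setting, in the spirit of \cite{Z4, MZ5}. First, I would differentiate the quasilinear equation from Lemma \ref{quasi1} by $\partial_x^\alpha$ for every $|\alpha|\le s$, take the $L^2$ inner product with $A^0\partial_x^\alpha U$, and integrate by parts. Using ($\mathcal{A}$1)---the symmetry of $A^0A^j$ and negative semi-definiteness of $A^0 dQ$---the convective terms contribute symmetric, boundary-free quadratic terms plus commutators $[\partial^\alpha, \tilde A^j]$ and $[\partial^\alpha, d\tilde Q]$, which under the smallness hypothesis $|U|_{H^s}\le \varepsilon$ (so that $|U|_{W^{1,\infty}}\lesssim \varepsilon$ via Sobolev embedding, $s\ge [d/2]+2$) are handled by standard Moser-type estimates and absorbed by an $\varepsilon$-fraction of the remaining good terms. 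The relaxation term yields a partial dissipation $-\theta\sum_{|\alpha|\le s}|\partial^\alpha v|_{L^2}^2$, controlling only the non-equilibrium component.

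Next, to recover dissipation on the equilibrium variable $u$, I would introduce the Kawashima compensator: for $|\alpha|\le s-1$, take the $L^2$ inner product of the equation with $K(\partial_x)\partial^\alpha U$, sum, and weight by a small parameter $\epsilon_0>0$. Skew-symmetry of $\bar K(\xi)$ makes the time-derivative term a pure quadratic in $U$, and ($\mathcal{A}$2) together with Plancherel produces, at the frozen-coefficient level $\tilde A^j \mapsto A^j_\pm$, a strictly negative contribution
\begin{equation*}
-\bigl\langle \bigl(|\xi|^2 A^0 dQ - \sum_j \xi_j \bar K(\xi) A^j\bigr)_{\pm}\hat U,\hat U\bigr\rangle \;\ge\; \theta\,|\xi|^2|\hat U|^2,
\end{equation*}
which transfers to $\theta|\nabla U|_{H^{s-1}}^2$. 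The variable-coefficient and profile-dependent discrepancies are of size $O(|U_+-U_-|)+O(|U|_{W^{1,\infty}})$ and are absorbable for small amplitude. Define the modified energy
\begin{equation*}
\mathcal{E}(U)\,:=\,\sum_{|\alpha|\le s}\langle A^0\partial^\alpha U,\partial^\alpha U\rangle + \epsilon_0 \sum_{|\alpha|\le s-1}\Re\langle K(\partial_x)\partial^\alpha U,\partial^\alpha U\rangle,
\end{equation*}
which for $\epsilon_0$ small enough is equivalent to $|U|_{H^s}^2$, and combine the two estimates to obtain
\begin{equation*}
\frac{d}{dt}\mathcal{E}(U) \;\le\; -\tilde\theta\,|U|_{H^s}^2 + C|U|_{L^2}^2 + C|U|_{H^s}\,|f|_{H^s},
\end{equation*}
the $|U|_{L^2}^2$ on the right arising because the $K$-mechanism only dissipates derivatives.

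Finally, I would estimate the forcing $f$ of Lemma \ref{quasi1}. The terms $(\partial_t-L)\delta\cdot\bar U'(x_1)$, $R_{x_1}$, and $M$ are products of exponentially localized factors in $\bar U^{(k)}(x_1)$ against polynomial combinations of $\delta$, $\delta_t$, and $\nabla_{\tilde x}\delta$; using Sobolev/Moser inequalities and the equation \eqref{conv-diff} satisfied by $\delta$ (which allows $\delta_t$ to be replaced by $\nabla_{\tilde x}\delta$-type quantities modulo faster-decaying terms), one gets
\begin{equation*}
|f|_{H^s}^2 \;\le\; C\,\epsilon(t) + C|U|_{H^s}^2\cdot |U|_{H^s}^2,\qquad \epsilon(t)\,\le\, C|\nabla_{t,\tilde x}\delta|_{L^2}^2 \,\le\, C\zeta_0^2(1+t)^{-(d-1)/2-1},
\end{equation*}
the decay rate following from the parabolic Green's function bounds for \eqref{conv-diff}. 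The quartic term is absorbed using $|U|_{H^s}\le \varepsilon$, yielding $\frac{d}{dt}\mathcal{E}(U)\le -\tfrac{\tilde\theta}{2}\mathcal{E}(U) + C|U|_{L^2}^2 + C\epsilon(t)$, and Gronwall's inequality produces \eqref{pro}.

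The principal technical obstacle I anticipate is the transfer of the Kawashima mechanism from the constant-coefficient endstates $A^j_\pm$, $dQ_\pm$ to the variable profile-coefficient system: at generic $x_1$, $\bar K$ and $A^0$ must be chosen consistently while maintaining the symmetry/skew-symmetry structure, and the commutators between $K(\partial_x)$ and the $x_1$-dependent coefficients produce additional terms that must be controlled by an $\varepsilon_0$-fraction of the dissipation. In the small-amplitude regime these errors are $O(|U_+-U_-|)$, hence harmless; in the large-amplitude case this same step is precisely the obstruction noted in Section \ref{discussion}, which was resolved in 1-D with considerable effort in \cite{MZ5} but remains open in multi-D.
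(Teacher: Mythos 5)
Your proposal is correct and follows essentially the same route as the paper: a symmetrizer energy estimate using ($\mathcal{A}$1) with Moser/Sobolev control of commutators, a Kawashima compensator built from $K(\partial_x)$ applied at the constant endstate coefficients with the variable-coefficient discrepancy absorbed by small amplitude and small $|U|_{H^s}$, an equivalent modified energy $\mathcal{E}\sim|U|_{H^s}^2$, control of the forcing by $\epsilon(t)=O(|\nabla_{t,\tilde x}\delta|_{L^2}^2)$, and Gronwall. The only differences are cosmetic (the paper weights the $r$-th order block by $c^{-r}$ and pairs each top multi-index $\alpha$ with $\tilde\alpha=\alpha-e_j$ rather than using a single small parameter $\epsilon_0$).
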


\begin{proof}
Let $\alpha$ be a multi-index with $|\alpha|=r\geq 1$.
Taking a differential operator $\partial_x^{\alpha}$ on the equation
\eqref{quasi}, we have
\begin{equation}
\partial_{x}^{\alpha} U_{t}+\sum_{j=1}^{d}\partial_{x}^{\alpha}\big(\tilde{A}^{j}U_{x_j}\big)-\partial_{x}^{\alpha}\big(d\tilde{Q}U\big)=\partial_{x}^{\alpha} f
\end{equation}
where
$f:=(\partial_t-L)\delta(\tilde{x},t)\bar{U}'(x_1)-R_{x_1}+(0,I_r)^{t}N(U,U)-M.$

Taking the $L^{2}$ inner product of $A^{0}\partial_x^{\alpha}U$
against $\partial_{x}^{\alpha} U$, we have the energy estimate:
\begin{align}\label{mainestimate}
\frac{1}{2}\frac{d}{dt}\big<A^{0}\partial_x^{\alpha}U,\partial_{x}^{\alpha}
U\big>&=\frac{1}{2}\big<A^{0}_t\partial_x^{\alpha}U,\partial_{x}^{\alpha}
U\big>+\big<A^{0}\partial_x^{\alpha}U_t,\partial_{x}^{\alpha}U\big>\\
&\leq \frac{1}{2}\big<(A^{0}_t+\sum_j
(A^{0}\tilde{A}^{j})_{x_j})\partial_x^{\alpha}U,\partial_{x}^{\alpha}U\big>
+\big<(A^{0}d\tilde{Q})\partial_x^{\alpha}U,\partial_x^{\alpha}U\big>\nonumber\\
& \;\;\;\;\;-C\big<\sum_{k=1}^{|\alpha|}\sum_j
A^{0}(\partial^{k}_x\tilde{A}^{j})
(\partial^{|\alpha|-k}_{x}U_{x_j}),\partial^{\alpha}_x U\big>\nonumber\\
& \;\;\;\;\;+\sum_{k=1}^{|\alpha|}\big<A^{0}(\partial^{k}_x
d\tilde{Q})(\partial^{|\alpha|-k}_{x}U),
\partial_x^{\alpha}U\big>+\big<A^{0}\partial_x^{\alpha}f,\partial_x^{\alpha}U\big>\nonumber\\
& \leq
C\big(|U|_{W^{1,\infty}}+|U|_{W^{1,\infty}}^{r}\big)|U|_{H^{r}}^{2}+
\varepsilon|U|^{2}_{H^{r}}+C|U|_{L^{2}}^{2}\nonumber\\
& \;\;\;\;\;\;\;\;
+\big<A^{0}d\tilde{Q}\partial_x^{\alpha}U,\partial_x^{\alpha}U\big>
+\epsilon(t),\nonumber\\
& \leq
C\big(|U|_{W^{1,\infty}}+|U|_{W^{1,\infty}}^{r}\big)|U|_{H^{r}}^{2}+
\varepsilon|U|^{2}_{H^{r}}+C|U|_{L^{2}}^{2}\nonumber\\
& \;\;\;\;\;\;\;\; +\big<(A^{0}dQ)_-
\partial_x^{\alpha}U,\partial_x^{\alpha}U\big>
+\epsilon(t),\nonumber
\end{align}where
\begin{equation}\nonumber\epsilon(t)=O\big(|\nabla_{t,\tilde{x}}\delta|_{L^{2}}^{2}\big)=
C(1+t)^{-\frac{d-1}{2}-1}.
\end{equation} The second last inequality is true
by Moser's inequalities and Sobolev inequalities.
For each $\alpha$ with $|\alpha|=r\geq 1$, we define
$\tilde{\alpha}:=\alpha-e_j$ where \text{ $j =$ min }$\{\;i\; : \;
\alpha_i \text{ is maximal}\}$. Then, $|\tilde{\alpha}|=r-1$. Let
$S_r:=\{(\alpha,\tilde{\alpha}):\;\;|\alpha|=r\}$. Similarly, taking
the $L^{2}$ inner product of
$K(\partial^{\alpha-\tilde{\alpha}}_x)\partial_x^{\tilde{\alpha}}U$
against $\partial_x^{\tilde{\alpha}}U$, we have the auxiliary energy
estimate:
\begin{align}\label{skewKK}
 \frac{1}{2}\frac{d}{dt}\langle
K(\partial^{\alpha-\tilde{\alpha}}_x)\partial_x^{\tilde{\alpha}}U,\partial_x^{\tilde{\alpha}}U\rangle&=
\frac{1}{2}\frac{d}{dt}\langle i
\bar{K}(\xi^{\alpha-\tilde{\alpha}})(i\xi)^{\tilde{\alpha}}\hat{U},(i\xi)^{\tilde{\alpha}}\hat{U}\rangle
\\
&=\langle i \bar{K}(\xi^{\alpha-\tilde{\alpha}})(i\xi)^{\tilde{\alpha}}\hat{U},(i\xi)^{\tilde{\alpha}}\hat{U}_t\rangle\nonumber\\
&=\langle (i\xi)^{2\tilde{\alpha}}\hat{U},-\sum_j\bar{K}(\xi^{\alpha-\tilde{\alpha}})\xi_jA_-^{j}\hat{U}\rangle\nonumber\\
&\;\;\;\;\; + \langle i
\bar{K}(\xi^{\alpha-\tilde{\alpha}})(i\xi)^{\tilde{\alpha}}\hat{U},(i\xi)^{\tilde{\alpha}}\hat{H}\rangle.\nonumber
\end{align}
using Plancherel's inequality together with the equation
\begin{equation}
\hat{U}_t=-\sum_j i\xi_j A_-^{j}\hat{U}+\hat{H},
\end{equation}
where
\begin{equation}
H:=\sum_j( A_-^{j}-\tilde{A}^{j}) U+(d \tilde{Q})U+f.
\end{equation}
By a direct calculation with the Moser inequality and the assumption
of smallness of $\bar{U}$, we have
\begin{equation}
|(i\xi)^{\tilde{\alpha}}\hat{H}|_{L^{2}(\xi)}=|\partial_x^{\tilde{\alpha}}
H|_{L^{2}(x)}\leq
C|U|_{L^{\infty}}|U|_{H^{r-1}}+\varepsilon|U|_{H^{r-1}}.
\end{equation}
Thus, we have
\begin{align}\label{kaw1}
&\frac{1}{2}\frac{d}{dt}\sum_{|\alpha|=r}\Big(
\big<A^{0}\partial_x^{\alpha}U,\partial_{x}^{\alpha}
U\big>+\big<K(\partial^{\alpha-\tilde{\alpha}}_x)\partial^{\tilde{\alpha}}_x
U,\partial_x^{\tilde{\alpha}}U\big>\Big)\\ \nonumber
&\leq \sum_{(\alpha,\tilde{\alpha})\in S_r}\big<
\xi^{2\tilde{\alpha}}\big(\xi^{2(\alpha-\tilde{\alpha})}(A^{0}dQ)_-
-\sum_{j=1}^{d}\xi_j\bar{K}(\xi^{\alpha-\tilde{\alpha}})A^{j}_-\big)
\hat{U},\hat{U}\big>\nonumber\\
&\;\;\;\;\;\;\;\;\;\;\;+\varepsilon|U|^{2}_{H^{r}}+C|U|_{L^{2}}^{2}+\epsilon(t)\nonumber\\
&\leq -\theta
\sum_{|\alpha|=r}\big<\xi^{2\alpha}\hat{U},\hat{U}\big>+\varepsilon|U|^{2}_{H^{r}}+C|U|_{L^{2}}^{2}+\epsilon(t)
\nonumber\\
 &\leq-\theta
|U|_{H^{r}}^{2}+\varepsilon|U|^{2}_{H^{r}}+C|U|_{L^{2}}^{2}+\epsilon(t)\nonumber
\end{align}
The second last inequality is true by \eqref{kawashimacondition}. By
\eqref{kaw1} and choosing $\varepsilon\leq\theta/2$, we obtain
\begin{align}\label{finalest}
&\frac{1}{2}\frac{d}{dt}\Big(\big<A^{0}U,U\big>+
\sum_{r=1}^{s}\sum_{|\alpha|=r}c^{-r}\big(\big<A^{0}\partial_x^{\alpha}U,\partial_{x}^{\alpha}
U\big>+\big<K(\partial^{\alpha-\tilde{\alpha}}_x)
\partial^{\tilde{\alpha}}_x
U,\partial_x^{\tilde{\alpha}}U\big>\big)\Big)\\
&\leq \sum_{r=1}^{s}\sum_{(\alpha,\tilde{\alpha})\in S_r}c^{-r}\big<
\xi^{2\tilde{\alpha}}\big(\xi^{2(\alpha-\tilde{\alpha})}(A^{0}Q)_-
-\sum_{j=1}^{d}\xi_j\bar{K}(\xi^{\alpha-\tilde{\alpha}})A_-^{j}\big)
\hat{U},\hat{U}\big> \nonumber\\
&\;\;\;\;\;\;\;\;\;\;\;\;\;+\sum_{r=1}^{s} \frac{\theta}{2c^{r}} |U|_{H^{r}}^{2}+C|U|_{L^{2}}^{2}+\epsilon(t)\nonumber\\
&\leq
-\frac{\theta}{2c^{s}}|U|_{H^{s}}^{2}+C|U|_{L^{2}}^{2}+\epsilon(t)\nonumber
\end{align}
so long as $|U|_{W^{1.\infty}}$ is small. We define
\begin{align}\label{equi}
\mathcal{E}(t)&:=\big<A^{0}U, U\big>+
\sum_{r=1}^{s}\sum_{|\alpha|=r}c^{-r}\big(\big<A^{0}\partial_x^{\alpha}U,\partial_{x}^{\alpha}
U\big>+\big<K(\partial^{\alpha-\tilde{\alpha}}_x)
\partial^{\tilde{\alpha}}_x
U,\partial_x^{\tilde{\alpha}}U\big>\big)\\
&\sim |U|_{H^{s}}^{2}(t)\nonumber
\end{align}
%
%
It is easy to check that $\mathcal{E}(t)$ is equivalent to
$|U|_{H^{s}}^{2}(t)$. Using \eqref{finalest} and \eqref{equi}, we
have the Gronwall-type inequality
\begin{equation}
\frac{d}{dt}\mathcal{E}(t)\leq -\tilde{\theta}
\mathcal{E}(t)+C\big(|U|_{L^{2}}^{2}(t)+\epsilon(t)\big).
\end{equation}
Therefore, we have the result.
\end{proof}

\begin{lem}\textup{ (High-Frequency Operator Estimate) }\label{HFest}
Let $G^{II}$ be the high-frequency part of Green function associated
with $(\partial_t-L)$. For any $f \in H^{3}$, there holds
\begin{equation}\label{hfestimate}
\Big| \int G^{II}(x,t;y)f(y)dy\Big|_{L^{2}}\leq C e^{-\theta
t}|f|_{H^{3}}.
\end{equation}
for some $\theta>0$.
\end{lem}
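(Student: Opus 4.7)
The plan is to treat $S^{II}(t) f := \int G^{II}(\cdot, t; y) f(y)\,dy$ as an inverse Laplace--Fourier transform of a resolvent expression along the contour $\Re \lambda = -\theta_1$, and to combine a resolvent-side analogue of the Kawashima energy estimate of Proposition \ref{4.4} with contour-integral manipulations that convert three derivatives of $f$ into three powers of $\lambda^{-1}$, thereby gaining absolute convergence in $\Im \lambda$.

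First, I would iterate the identity $(\lambda - L_{\tilde\xi})^{-1} = \lambda^{-1} + \lambda^{-1} L_{\tilde\xi} (\lambda - L_{\tilde\xi})^{-1}$ three times to obtain
\begin{equation*}
(\lambda - L_{\tilde \xi})^{-1} = \lambda^{-1} + \lambda^{-2} L_{\tilde \xi} + \lambda^{-3} L_{\tilde \xi}^{2} + \lambda^{-3} L_{\tilde \xi}^{3} (\lambda - L_{\tilde \xi})^{-1}.
\end{equation*}
The first three terms are entire in $\lambda$, so when inserted into the Bromwich integral for $G^{II}$ they integrate to zero along the vertical line $\Re \lambda = -\theta_1$ (close the contour leftward for $t > 0$, where $e^{\lambda t}$ decays). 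This leaves the representation
\begin{equation*}
S^{II}(t) f(x) = \frac{1}{(2\pi i)^{d}} \int_{-\theta_1 - i\infty}^{-\theta_1 + i\infty} \int_{\Omega_{II}} e^{\lambda t + i\tilde \xi \cdot \tilde x}\, \lambda^{-3}\, L_{\tilde \xi}^{3} (\lambda - L_{\tilde \xi})^{-1} \hat f(\cdot, \tilde \xi)(x_1) \, d\tilde \xi \, d\lambda,
\end{equation*}
where $\Omega_{II} = \{|\tilde \xi| \ge \delta_1\} \cup \{|\Im \lambda| \ge r\}$ is the high-frequency region. Here $|e^{\lambda t}| = e^{-\theta_1 t}$ produces the exponential decay, $|\lambda|^{-3}$ renders the $\Im \lambda$ integration absolutely convergent, and $L_{\tilde \xi}^{3}$ applied to $f$ involves at most three $x$-derivatives, and is hence controlled by $|f|_{H^{3}}$.

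Next, I would supply the missing ingredient: a uniform $L^{2} \to L^{2}$ bound on $(\lambda - L_{\tilde \xi})^{-1}$ on the contour within the high-frequency region. The key is a Laplace-side analogue of the energy argument of Proposition \ref{4.4}. Testing the resolvent equation $(L_{\tilde \xi} - \lambda) W = g$ against $A^{0} W$ yields relaxation dissipation via $A^{0} dQ \le 0$, while testing against the skew-symmetric compensator $K(\tilde \xi) W$ supplies the remaining transverse dissipation through ($\mathcal A 2$). For $|\tilde \xi| \ge \delta_1$ this gives coercivity directly; for $|\Im \lambda| \ge r$ with $|\tilde \xi|$ small, a factor of $|\lambda|^{-1}$ is extracted from the skew term using the resolvent identity, again closing the estimate. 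The complementary compact piece $\{|\tilde \xi| \le \delta_1,\, |\Im \lambda| \le r,\, \Re \lambda = -\theta_1\}$ is handled by spectral stability ($\mathcal D 1$) combined with continuity of the Evans function, which ensures invertibility after a small enough inward shift $\theta_1 > 0$.

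The hardest step will be the high-frequency resolvent bound itself: the analogous time-dependent damping estimate in Proposition \ref{4.4} relies on careful bookkeeping of commutators generated by the $x_1$-dependence of $\bar A^{j}(\bar U(x_1))$, absorbed using small amplitude of the profile, and transferring this argument to the Laplace variable while uniformizing in $\Im \lambda$ requires some care but is essentially parallel to the one already given. Once these ingredients are assembled, the contour representation above yields $|S^{II}(t) f|_{L^{2}} \le C e^{-\theta_1 t} |f|_{H^{3}}$, proving the lemma with $\theta := \theta_1$.
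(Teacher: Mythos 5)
Your overall strategy coincides with the paper's: iterate the resolvent identity to trade powers of $\lambda^{-1}$ for derivatives of $f$, prove a high-frequency resolvent bound by a Kawashima-type energy estimate on $(\lambda-L_{\tilde\xi})W=g$ (with the intermediate compact region handled by ($\mathcal{D}1$)/compactness), and conclude along the shifted contour $\Re\lambda=-\theta_1$. However, two of your steps do not go through as stated. First, the claim that the terms $\lambda^{-1}$, $\lambda^{-2}L_{\tilde\xi}$, $\lambda^{-3}L_{\tilde\xi}^{2}$ ``integrate to zero'' is false for $G^{II}$: the high-frequency integral is taken over $\{|\tilde\xi|\ge\delta_1\}\cup\{|\Im\lambda|\ge r\}$, not over the full vertical line, so the contour cannot simply be closed leftward. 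What vanishes is the full-line principal-value integral of $\lambda^{-1}e^{\lambda t}$; one must then add back (and estimate) the contribution of the excluded compact set $\{|\tilde\xi|\le\delta_1,\ |\Im\lambda|\le r\}$, which is exactly how the paper treats its term $C$, producing the factor $2r/\theta_1\, e^{-\theta_1 t}|f|_{H^1}$. For the $\lambda^{-2}$ and $\lambda^{-3}$ terms no cancellation is needed or available over the restricted region; they are simply absolutely convergent and estimated directly.

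Second, and more substantively, your three-fold iteration places all three derivatives on $L_{\tilde\xi}^{3}f$ and therefore requires a uniform $L^{2}\to L^{2}$ bound on $(\lambda-L_{\tilde\xi})^{-1}$ over the high-frequency part of the contour. The energy argument you invoke does not deliver this: in the regime $|\tilde\xi|$ small, $|\Im\lambda|\ge r$, the imaginary-part estimate reads schematically $|\Im\lambda|\,|U|_{L^2}^2\le C|g|_{L^2}|U|_{L^2}+C|U|_{H^1}|U|_{L^2}$, because the first-order convection terms contribute $O(|U|_{H^1}|U|_{L^2})$ to the imaginary part; closing the estimate thus forces you to the $H^1$ level and yields only $|(\lambda-L_{\tilde\xi})^{-1}|_{H^1(x_1)}\le C$ (this is precisely \eqref{resolventid} in the paper, obtained by testing with the weight $(1+|\tilde\xi|+\partial_{x_1})$). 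With an $H^{1}$ resolvent bound, your remainder term $\lambda^{-3}(\lambda-L_{\tilde\xi})^{-1}L_{\tilde\xi}^{3}f$ would require $L_{\tilde\xi}^{3}f\in H^{1}$, i.e.\ $f\in H^{4}$, weakening the lemma. The paper's calibration is the correct one: iterate only twice, so that $\lambda^{-2}$ already gives absolute convergence in $\Im\lambda$, and apply the $H^{1}$ resolvent bound to $L_{\tilde\xi}^{2}f\in H^{1}$, which needs exactly $f\in H^{3}$. With these two corrections your argument reduces to the paper's proof.
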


\begin{proof}
First, we establish the ``high-frequency'' resolvent bound by
Kawashima-type energy estimate. To this end, consider the eigenvalue
equation
\begin{equation}
(\lambda-L_{\tilde{\xi}})U=f
\end{equation}
Set
\begin{align}
&\big\langle A^{0}(\lambda-L_{\tilde{\xi}})(1+|\tilde{\xi}|+\partial_{x_1})U, (1+|\tilde{\xi}|+\partial_{x_1})U\big\rangle\\
&\;\;\;\;\;\;\;\;\;\;\;=\big\langle
A^{0}(1+|\tilde{\xi}|+\partial_{x_1})f,(1+|\tilde{\xi}|+\partial_{x_1})U\big\rangle.\nonumber
\end{align}
We obtain by the simpler linear version of Kawashima-type energy
estimate as in Proposition \ref{4.4} (the damping estimate), that
\begin{align}\label{realpart}
\text{ Re } \lambda \big|(1+|\tilde{\xi}|+\partial_{x_1})U\big|^{2}_{L^{2}(x_1)} &\leq -\theta \big|(1+|\tilde{\xi}|+\partial_{x_1})U\big|^{2}_{L^{2}(x_1)} \\
&\;+C^{*}\big|(1+|\tilde{\xi}|+\partial_{x_1})f\big|^{2}_{L^{2}(x_1)}+C^{*}\big|U\big|^{2}_{L^{2}(x_1)}.
\nonumber
\end{align}

Taking the imaginary part of the $L^{2}$ inner product of $U$
against $\lambda U=L_{\tilde{\xi}}U +f$, we have
\begin{align}\label{impart}
&|\text{Im }\lambda| |U|^{2}_{L^{2}(x_1)}\leq \\
& \;\;\;C_2 |f|^{2}_{L^{2}(x_1)}
+(2\varepsilon)^{-1}|U|^{2}_{L^{2}(x_1)}+\varepsilon|(1+|\tilde{\xi}|+\partial_{x_1})U|^{2}_{L^{2}(x_1)}.\nonumber
\end{align}

By \eqref{realpart} and \eqref{impart}, we establish the
high-frequency resolvent bound
\begin{equation}\label{resolventid}
\big|(\lambda-L_{\tilde{\xi}})^{-1}\big|_{H^{1}(x_1)}\leq C \textup{
for } |(\tilde{\xi},\lambda)|\geq R \textup{ and
}\mathfrak{R}\lambda\geq -\theta,
\end{equation}
for some $R, C > 0$ sufficiently large and $\theta >0$ sufficiently small.\\
Moreover, we have the intermediate frequency bound,
\begin{equation}
|(\lambda-L_{\tilde{\xi}})^{-1}|_{H^{1}(x_1)}\leq C \textup{ for }
R^{-1}\leq |(\tilde{\xi},\lambda)|\leq R \textup{ and
}\mathfrak{R}\lambda\geq -\theta,
\end{equation}
for any $R>0$ and $C=C(R)>0$ sufficiently large and $\theta>0$
sufficiently small. This follows by compactness of the set of
frequencies under consideration together with the fact that the
resolvent $(\lambda-L_{\tilde{\xi}})^{-1}$ is analytic with
respect to $H^{1}$ in $(\tilde{\xi}, \lambda)$.

On the other hand, it is easy to check the following resolvent
identity using analyticity on the resolvent set $\rho(L)$ of the
resolvent $(\lambda-L_{\tilde{\xi}})^{-1}$, for all $f\in
\mathcal{D}(L_{\tilde{\xi}})$,
\begin{equation}
(\lambda-L_{\tilde{\xi}})^{-1}f=\lambda^{-2}(\lambda-L_{\tilde{\xi}})^{-1}L_{\tilde{\xi}}^{2}f+\lambda^{-2}L_{\tilde{\xi}}f+\lambda^{-1}f
\end{equation}
Using the resolvent identity $\eqref{resolventid}$, the
high-frequency solution operator $S^{II}$ can be written as
\begin{eqnarray}
S^{II}f&=& \textup{P.V.}\int_{-\theta_1-i\infty}^{-\theta_1+i\infty}
\int_{\mathbb{R}^{d-1}}\chi_{|\tilde{\xi}|^{2}+|\mathfrak{I}\lambda|^{2}\geq\theta_1+\theta_2}\nonumber\\
&&\;\;\;\;\;\;\;\;\;\;\;\;\;\;\;\;\; \times e^{i\tilde{\xi}\cdot (\tilde{x}-\tilde{y}) +\lambda t} (\lambda-L_{\tilde{\xi}})^{-1} \hat{f}(x_1,\tilde{\xi}) d\tilde{\xi} d\lambda \nonumber\\
&=& \textup{P.V.} \int_{-\theta_1-i\infty}^{-\theta_1+i\infty}
\int_{\mathbb{R}^{d-1}}\chi_{|\tilde{\xi}|^{2}+|\mathfrak{I}\lambda|^{2}\geq\theta_1+\theta_2}\nonumber\\
&&\;\;\;\;\;\;\;\;\;\;\;\;\;\;\;\;\; \times e^{i\tilde{\xi}\cdot
(\tilde{x}-\tilde{y}) +\lambda
t}\lambda^{-2}(\lambda-L_{\tilde{\xi}})^{-1}L_{\tilde{\xi}}^{2}\hat{f}(x_1,\tilde{\xi}) d\tilde{\xi} d\lambda\nonumber\\
&& + \textup{P.V.}\int_{-\theta_1-i\infty}^{-\theta_1+i\infty}
\int_{\mathbb{R}^{d-1}}\chi_{|\tilde{\xi}|^{2}+|\mathfrak{I}\lambda|^{2}\geq\theta_1+\theta_2}\nonumber\\
&&\;\;\;\;\;\;\;\;\;\;\;\;\;\;\;\;\; \times e^{i\tilde{\xi}\cdot
(\tilde{x}-\tilde{y}) +\lambda t}\lambda^{-2}L_{\tilde{\xi}}\hat{f}(x_1,\tilde{\xi}) d\tilde{\xi} d\lambda \nonumber\\
&& + \textup{P.V.}\int_{-\theta_1-i\infty}^{-\theta_1+i\infty}
\int_{\mathbb{R}^{d-1}}\chi_{|\tilde{\xi}|^{2}+|\mathfrak{I}\lambda|^{2}\geq\theta_1+\theta_2}\nonumber\\
&&\;\;\;\;\;\;\;\;\;\;\;\;\;\;\;\;\; \times e^{i\tilde{\xi}\cdot
(\tilde{x}-\tilde{y}) +\lambda t}\lambda^{-1}\hat{f}(x_1,\tilde{\xi}) d\tilde{\xi} d\lambda \nonumber\\
&=:& A + B +C.\nonumber
\end{eqnarray}
For $f\in H^{3}$, there holds
\begin{align}\label{A}
|A|_{H^{1}}&\leq C e^{-\theta_1 t} \textup{ sup }_{\lambda} | (\lambda-L_{\tilde{\xi}})^{-1}|_{H^{1}} |f|_{H^{3}} \\
&\leq C^{*} e^{-\theta_1 t} |f|_{H^{3}}.\nonumber
\end{align}
Similarly, we have
\begin{align}\label{B}
|B|_{H^{1}}&\leq C e^{-\theta_1 t} |f|_{H^{2}}.
\end{align}
Using the triangle inequality, we have
\begin{align}\label{C}
|C|_{H^{1}}&\leq\Big|\text{P.V.}\int_{-\theta_1-i\infty}^{-\theta_1+i\infty}\lambda^{-1}e^{\lambda t} d\lambda \int_{\RR^{d-1}}e^{i\tilde{x}\cdot\tilde{\xi}}\hat{f}(x_1,\tilde{\xi}) d\tilde{\xi}\Big|_{H^{1}}\\
&\;\;\;\;\; + \Big|\int_{-\theta_1-i r}^{-\theta_1+i r}\lambda^{-1}e^{\lambda t} d\lambda \int_{\RR^{d-1}}e^{i\tilde{x}\cdot\tilde{\xi}}\hat{f}(x_1,\tilde{\xi}) d\tilde{\xi}\Big|_{H^{1}}\nonumber\\
&\leq 2 r / \theta_1 e^{-\theta_1 t} |f|_{H^{1}}.\nonumber
\end{align}
The last inequality is true since the first term is zero,
by the inverse Laplace transform identity
$$
\text{P.V.}\frac{1}{2\pi i}\int_{-\theta_1-i\infty}^{-\theta_1+i\infty}
\lambda^{-1}e^{\lambda t} d\lambda 
=\begin{cases} 0 &\theta_1 t>0\\
1 &\theta_1 t<0.\\
\end{cases}
$$

By the expression of $S^{II}$, together with $\eqref{A},$
$\eqref{B}$ and $\eqref{C}$, we have
\begin{equation}
|S^{II}|_{H^{3}\rightarrow L^{2}}(t)\leq C e^{-\theta t}. 
\end{equation}
Therefore, we have the result.
\end{proof}

\begin{rem}
\textup{ The argument of Lemma \ref{HFest}, based on the
initial-value problem with homogeneous forcing, greatly simplifies
the treatment in \cite{Z4} based on the initial-value problem with
homogeneous data and inhomogeneous forcing. The argument is quite
general, in particular extending without modification to the case
that the equilibrium model is a system. }
\end{rem}

\section{Nonlinear $L^{2}$ decay and asymptotic behavior.}
We now carry out the nonlinear analysis by an argument combining the
approach of \cite{HoZ2} in the low-frequency domain with that of
\cite{Z4} in the high-frequency domain.
 Let the approximate shock deformation $\delta(\tilde{x},t)$ be a solution of the constant coefficient equation
 \begin{equation}
 \delta_t+\bar{\tilde{a}}\cdot\nabla_{\tilde{x}}\delta=\text{div}_{\tilde{x}}(\tilde{\beta}\nabla_{\tilde{x}}\delta)
 \end{equation}
 with initial data
 \begin{equation}\label{convdiffconst}
\delta_0(\tilde{x})=-([u]^{-1},0) \int_{-\infty}^{\infty}
\tilde{U}(x_1,\tilde{x},0)-\bar{U}(x_1) dx_1.
\end{equation}
 Here, $\bar{\tilde{a}}$ and $\tilde{\beta}$ are as in Appendix \ref{appA}.

 We define a smooth approximation of $\delta$, denoted by
$\delta^{\varepsilon}$, by
 \begin{equation} \delta^{\varepsilon}(\cdot,t):=\eta^{\varepsilon} * \delta(\cdot,t)
 \end{equation}
 where $\eta^{\varepsilon}$ is a smooth mollifier supported in $B(0,\varepsilon)$. Note that $\delta^{\varepsilon}$
satisfies the same convected heat equation as $\delta$ does with
$\mathcal{C}^{\infty}$ initial data $\delta^{\varepsilon}_0
=\eta^{\varepsilon}*\delta_0$. Define
\begin{equation}\label{res}
U(x,t):=\tilde{U}(x,t)-\bar{U}(x_1-\delta^{\varepsilon}(\tilde{x},t)).
\end{equation}

 \begin{lem}\label{deltalem}
 For $|\tilde{U}_0-\bar{U}|_{L^{1}}\leq \zeta_0$,  a multi-index $|\alpha|\leq K$, there holds
 \begin{equation}\label{deltabound}
 |\partial_{\tilde{x}}^{\alpha}\delta^{\varepsilon}(\cdot,t)|_{L^{2}}\leq C\zeta_0(1+t)^{-(d-1)/4-|\alpha|/2}
 \end{equation}
 and
 \begin{equation}
 |\delta^{\varepsilon}(\cdot,t)-\delta(\cdot,t)|_{L^{2}}\leq C\zeta_0 t^{-(d-1)/4-1/2}
 \end{equation} where $C=C(\varepsilon, K)$ is a constant.
 \end{lem}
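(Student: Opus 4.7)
\textbf{Proof proposal for Lemma \ref{deltalem}.}

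The plan is to exploit the explicit representation of $\delta$ as convolution with a $(d-1)$-dimensional convected heat kernel, then transfer bounds to $\delta^\varepsilon$ via Young's inequality. Write
\begin{equation*}
\delta(\tilde x, t) = (g(\cdot,t) \ast \delta_0)(\tilde x), \qquad g(\tilde x,t) = c_{\tilde\beta} t^{-(d-1)/2} e^{-\frac{(\tilde x - \bar{\tilde a} t)^t \tilde \beta^{-1}(\tilde x - \bar{\tilde a} t)}{4t}},
\end{equation*}
where, by \eqref{convdiffconst} and the hypothesis $|\tilde U_0 - \bar U|_{L^1}\le \zeta_0$, we have $|\delta_0|_{L^1}\le C\zeta_0$. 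Standard Gaussian estimates give $|\partial_{\tilde x}^\alpha g(\cdot,t)|_{L^2} \le C t^{-(d-1)/4 - |\alpha|/2}$ for $t>0$.

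First I would prove \eqref{deltabound}. For large $t$ (say $t\ge 1$), since $\partial_{\tilde x}^\alpha \delta^\varepsilon = \eta^\varepsilon \ast \partial_{\tilde x}^\alpha(g\ast \delta_0) = (\partial_{\tilde x}^\alpha g) \ast \eta^\varepsilon \ast \delta_0$, Young's inequality ($L^2\ast L^1\to L^2$) together with $|\eta^\varepsilon|_{L^1}=1$ yields
\begin{equation*}
|\partial_{\tilde x}^\alpha \delta^\varepsilon(\cdot,t)|_{L^2} \le |\partial_{\tilde x}^\alpha g(\cdot,t)|_{L^2} |\delta_0|_{L^1} \le C\zeta_0 t^{-(d-1)/4 - |\alpha|/2}.
\end{equation*}
For bounded $t$ (say $0\le t\le 1$), I instead move all derivatives onto the mollifier: $\partial_{\tilde x}^\alpha \delta^\varepsilon = (\partial_{\tilde x}^\alpha \eta^\varepsilon)\ast g\ast \delta_0$, so Young's inequality gives
\begin{equation*}
|\partial_{\tilde x}^\alpha \delta^\varepsilon(\cdot,t)|_{L^2} \le |\partial_{\tilde x}^\alpha \eta^\varepsilon|_{L^2} |g(\cdot,t)|_{L^1} |\delta_0|_{L^1} \le C(\varepsilon,K)\zeta_0,
\end{equation*}
using mass conservation $|g(\cdot,t)|_{L^1} = 1$. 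Combining the two regimes produces the $(1+t)^{-(d-1)/4 - |\alpha|/2}$ decay.

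For the second assertion, I write $\delta^\varepsilon - \delta = (\eta^\varepsilon \ast g(\cdot,t) - g(\cdot,t))\ast \delta_0$, so Young yields
\begin{equation*}
|\delta^\varepsilon - \delta|_{L^2} \le |\eta^\varepsilon \ast g(\cdot,t) - g(\cdot,t)|_{L^2}|\delta_0|_{L^1}\le C\zeta_0 |\eta^\varepsilon\ast g(\cdot,t) - g(\cdot,t)|_{L^2}.
\end{equation*}
Since $\int \eta^\varepsilon = 1$ and $\mathrm{supp}\,\eta^\varepsilon\subset B(0,\varepsilon)$, a Taylor expansion gives
\begin{equation*}
(\eta^\varepsilon\ast g - g)(\tilde x, t) = \int \eta^\varepsilon(\tilde y)\!\int_0^1 \nabla g(\tilde x - s\tilde y, t)\cdot(-\tilde y)\, ds\, d\tilde y,
\end{equation*}
so by Minkowski $|\eta^\varepsilon\ast g - g|_{L^2} \le \varepsilon |\nabla g(\cdot,t)|_{L^2}\le C\varepsilon t^{-(d-1)/4 - 1/2}$, giving the stated bound with an $\varepsilon$-dependent constant.

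The steps are essentially routine convolution estimates; the only mild subtlety is the distinction between small-$t$ and large-$t$ behavior, handled by allocating derivatives to $\eta^\varepsilon$ or to $g$ depending on which factor decays favorably.
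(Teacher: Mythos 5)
Your argument is correct and follows essentially the same route as the paper's proof: convolution representation with the heat kernel, Young's inequality with derivatives placed on $g$ for large $t$ and on $\eta^{\varepsilon}$ for small $t$, and a first-order Taylor expansion of $\eta^{\varepsilon}\ast g - g$ combined with $|\nabla g(\cdot,t)|_{L^{2}}\le Ct^{-(d-1)/4-1/2}$ for the second bound. No gaps.
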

 \begin{proof}
 Letting $g(\tilde{x},t)$ be a Green function for \eqref{convdiffconst}, we have
 \begin{equation}
 \delta^{\varepsilon}(\cdot,t)=g(\cdot,t)*\delta^{\varepsilon}(\cdot,0)
 \end{equation} and
 \begin{equation}
 \delta^{\varepsilon}(\cdot,t)-\delta(\cdot,t)=g(\cdot,t)*(\delta^{\varepsilon}(\cdot,0)-\delta(\cdot,0)).
\end{equation}

Thus, we have
\begin{align}\label{longtime}
 |\partial_{\tilde{x}}^{\alpha}\delta^{\varepsilon}(\cdot,t)|_{L^{2}}
 &\leq |\partial_{\tilde{x}}^{\alpha}g(\cdot,t)|_{L^{2}}|\delta^{\varepsilon}(\cdot,t)|_{L^{1}}\\
 &\leq |\partial_{\tilde{x}}^{\alpha}g(\cdot,t)|_{L^{2}}|\eta^{\varepsilon}|_{L^{1}}|\delta^{\varepsilon}_0|_{L^{1}}\nonumber\\
 &\leq C\zeta_0 t^{-(d-1)/4-|\alpha|/2}\nonumber
\end{align}
by the standard fact that a heat kernel $g$ decays as
\begin{equation}
|\partial_{\tilde{x}}^{\alpha}g(\cdot,t)|_{L^{2}}\leq C
t^{-(d-1)/4-|\alpha|/2}
\end{equation}
and observing that $|\delta_0|_{L^{1}}\leq
C|\tilde{U}_0-\bar{U}|_{L^{1}}\leq C\zeta_0$ by definition of
$\delta_0$. On the other hand, we have also
\begin{align}\label{shorttime}
|\partial_{\tilde{x}}^{\alpha}\delta^{\varepsilon}(\cdot,t)|_{L^{2}}&\leq |g(\cdot,t)|_{L^{1}}|\partial_{\tilde{x}}^{\alpha}\eta^{\varepsilon}|_{L^{2}}|\delta(\cdot,t)|_{L^{1}}\\
&\leq C\zeta_0\varepsilon^{-|\alpha|}.\nonumber
\end{align}
By \eqref{longtime} and \eqref{shorttime}, we have the first claim.
Expressing
\begin{equation}
\delta^{\varepsilon}(\cdot,t)-\delta(\cdot,t)=(g^{\varepsilon}(\cdot,t)-g(\cdot,t))*\delta(\cdot,0),
\end{equation}
and noting that
\begin{align}
|(g^{\varepsilon}(\tilde{y},t)-g(\tilde{y},t))|_{L^{2}(\tilde{y})}&=
\Big|\int( g(\tilde{y}-\tilde{z},t)-g(\tilde{y},t))\eta^{\varepsilon}(\tilde{z})d\tilde{z}\Big|_{L^{2}(\tilde{y})}\\
&=\Big|\int \int_0^{1} (\nabla_{\tilde{x}}g(\tilde{y}-\theta \tilde{z},t)\cdot \tilde{z}) \eta^{\varepsilon}(\tilde{z})d\theta d\tilde{z}\Big|_{L^{2}(\tilde{y})}\nonumber\\
&\leq \int_0^{1} |\nabla_{\tilde{x}}g(\tilde{y}-\theta \tilde{z},t)|_{L^{2}(\tilde{y})}|\tilde{z}\eta^{\varepsilon}(\tilde{z})|_{L^{1}(\tilde{z})} d\theta\nonumber\\
&\leq C \varepsilon t^{-(d-1)/4-1/2}.\nonumber
\end{align}
In the last inequality, we have used the fact that
$|\tilde{z}\eta^{\varepsilon}(\tilde{z})|_{L^{1}(\tilde{z})}\leq
C\varepsilon$. Thus, we obtain
\begin{align}
|\delta^{\varepsilon}(\cdot,t)-\delta(\cdot,t)|_{L^{2}}&\leq |g^{\varepsilon}(\cdot,t)-g(\cdot,t)|_{L^{2}}|\delta(\cdot,0)|_{L^{1}}\\
&\leq C\varepsilon\zeta_0 t^{-(d-1)/4-1/2}.\nonumber
\end{align}
\end{proof}

\begin{lem}
If $| \tilde{U}_0-\bar{U}|_{L^{1}}$, $|
\tilde{U}_0-\bar{U}|_{L^{\infty}}$, $|
x_1(\tilde{U}_0-\bar{U})|_{L^{1}} \leq \zeta_0$. Then,
\begin{equation}
|U_0|_{L^{1}}, |U_0|_{L^{\infty}}, | x_1 U_0|_{L^{1}}\leq
C(\varepsilon)\zeta_0.
\end{equation}
\end{lem}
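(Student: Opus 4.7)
The plan is to split
$$
U_0(x) \;=\; \bigl(\tilde U_0(x)-\bar U(x_1)\bigr) \;+\; \bigl(\bar U(x_1) - \bar U(x_1-\delta^\varepsilon(\tilde x,0))\bigr)
$$
and bound each summand separately in the three norms. The first summand is controlled directly by the hypotheses $|\tilde U_0-\bar U|_{L^1}, |\tilde U_0-\bar U|_{L^\infty}, |x_1(\tilde U_0-\bar U)|_{L^1}\le\zeta_0$. For the second, use the mean-value identity
$$
\bar U(x_1)-\bar U(x_1-\delta^\varepsilon(\tilde x,0)) \;=\; \delta^\varepsilon(\tilde x,0)\int_0^1 \bar U'\bigl(x_1-\theta\delta^\varepsilon(\tilde x,0)\bigr)\,d\theta,
$$
together with the exponential decay $|\bar U'(y)|\le Ce^{-\theta|y|}$ of the small-amplitude profile derivative (standing assumption (H4) and the existence theory).

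For the $L^1$ bound on the second summand, integrate first in $x_1$ to absorb the translate of $\bar U'$ into a uniform constant $\int|\bar U'(x_1-\theta\delta^\varepsilon)|\,dx_1 = \int|\bar U'(y)|\,dy\le C$, and then in $\tilde x$, producing $C|\delta^\varepsilon_0|_{L^1}$. Since $\delta^\varepsilon_0 = \eta^\varepsilon * \delta_0$ and the formula \eqref{convdiffconst} gives $|\delta_0|_{L^1}\le C|\tilde U_0-\bar U|_{L^1}\le C\zeta_0$, Young's inequality yields $|\delta^\varepsilon_0|_{L^1}\le|\eta^\varepsilon|_{L^1}|\delta_0|_{L^1}\le C\zeta_0$. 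For the $L^\infty$ bound, pointwise
$$
\bigl|\bar U(x_1)-\bar U(x_1-\delta^\varepsilon)\bigr| \le |\delta^\varepsilon(\tilde x,0)|\,\sup_y|\bar U'(y)|\le C|\delta^\varepsilon_0|_{L^\infty},
$$
and Young's inequality gives $|\delta^\varepsilon_0|_{L^\infty}\le|\eta^\varepsilon|_{L^\infty}|\delta_0|_{L^1}\le C(\varepsilon)\zeta_0$. This is where the $\varepsilon$-dependence in the constant arises.

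For the weighted bound $|x_1 U_0|_{L^1}$, apply the substitution $y=x_1-\theta\delta^\varepsilon$ inside the mean-value representation:
$$
\int |x_1|\,|\bar U'(x_1-\theta\delta^\varepsilon)|\,dx_1 \le \int|y|\,|\bar U'(y)|\,dy + |\delta^\varepsilon(\tilde x,0)|\int|\bar U'(y)|\,dy \le C\bigl(1+|\delta^\varepsilon_0|_{L^\infty}\bigr).
$$
Integrating in $\tilde x$ produces $C(1+|\delta^\varepsilon_0|_{L^\infty})|\delta^\varepsilon_0|_{L^1}\le C(\varepsilon)\zeta_0(1+C(\varepsilon)\zeta_0)\le C(\varepsilon)\zeta_0$ for $\zeta_0$ small. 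The hypothesis $|x_1(\tilde U_0-\bar U)|_{L^1}\le\zeta_0$ handles the first summand directly.

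There is no real obstacle here; the only delicate point is tracking the $\varepsilon$-dependence, which enters exclusively through the Young bound $|\eta^\varepsilon|_{L^\infty}\le C\varepsilon^{-(d-1)}$ used to pass from $|\delta_0|_{L^1}$ to $|\delta^\varepsilon_0|_{L^\infty}$. All other constants are $\varepsilon$-independent, stemming from the fixed exponential decay rate of the profile derivative $\bar U'$.
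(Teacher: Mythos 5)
Your proof is correct and follows essentially the same route as the paper's: the same splitting of $U_0$ into $(\tilde U_0-\bar U)$ plus $\bar U(x_1)-\bar U(x_1-\delta_0^{\varepsilon})$, the same mean-value representation through $\bar U'$, and the same source of the $\varepsilon$-dependence via $|\delta_0^{\varepsilon}|_{L^{\infty}}\le|\eta^{\varepsilon}|_{L^{\infty}}|\delta_0|_{L^1}$. You in fact supply more detail than the paper does on the weighted bound $|x_1U_0|_{L^1}$, which the paper dispatches with ``similarly.''
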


\begin{proof}
We have
\begin{equation}
\bar{U}(x_1)-\bar{U}(x_1-\delta_0^{\varepsilon})=\int_0^{1}
\bar{U}'(x_1-\theta\delta_0^{\varepsilon})\delta_0^{\varepsilon}d\theta.
\end{equation}
Then we have
\begin{align}\label{differencewithdelta}
&|\bar{U}(x_1)-\bar{U}(x_1-\delta_0^{\varepsilon})|_{L^{p}(x)}\\
&\leq \int_0^{1}
|\bar{U}'(x_1-\theta\delta_0^{\varepsilon})\delta_0^{\varepsilon}|_{L^{p}(x)}d\theta
\nonumber\\
&\leq \int_0^{1}
|\bar{U}'|_{L^{p}(x_1)}|\delta_0^{\varepsilon}|_{L^{p}(\tilde{x})}d\theta\leq
C\zeta_0\nonumber
\end{align}
By \eqref{differencewithdelta} together with the triangle
inequality, we have
\begin{equation}
|U_0|_{L^{p}(x)}\leq|\tilde{U}_0-\bar{U}|_{L^{p}(x)}+
|\bar{U}(x_1)-\bar{U}(x_1-\delta_0^{\varepsilon})|_{L^{p}(x)}\leq(C+1)\zeta_0.
\end{equation}
Similarly, we can prove the other inequality.
\end{proof}

\begin{lem} The nonlinear residual $U(x,t)$ defined in $\eqref{res}$ satisfies
\begin{align}
&U_t+\sum_{j=1}^{d}\big(A^{j}(\bar{U})U\big)_{x_j}- d Q
(\bar{U})U\\
&=(\partial_t-L)\delta^{\varepsilon}\bar{U}'-R_{x_1}+\sum_{j=1}^{d}N^{j}_{x_j}+
(0,I_r)^{t}N^{0}+\sum_{j=1}^{d} S^{j}_{x_j} +
(0,I_r)^{t}S^{0}\nonumber
\end{align}
where
\begin{equation}\label{5.20}
R_{x_1}=O(|\delta^{\varepsilon}_t||\bar{U}'||\delta^{\varepsilon}|+
|\nabla_{\tilde{x}}\delta^{\varepsilon} | |\bar{U}'|
|\delta^{\varepsilon}|)_{x_1}=O(|\delta^{\varepsilon}|(|\delta^{\varepsilon}_t|+|\nabla_{\tilde{x}}\delta^{\varepsilon}|)|\bar{U}''|)
\end{equation} and
\begin{equation}\label{5.21}
N^{j}=O(|U|^{2}) \text{ and } S^{j}=O(|\delta^{\varepsilon}|
|\bar{U}'| |U|)\text{ for }j=0,1,...,d.
\end{equation}
\end{lem}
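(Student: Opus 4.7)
The plan is to mimic the derivation in Lemma \ref{quasi1}, but with three modifications: replace $\delta$ by $\delta^\varepsilon$; linearize about the \emph{unshifted} profile $\bar U(x_1)$ (so that the linear operator on the left matches $L$ exactly, not its quasilinear analogue based at $\tilde U$); and separate the nonlinear corrections into divergence-form pieces $N^j, S^j$ and non-divergence pieces $N^0, S^0$ with the $(0,I_r)^t$ factor reflecting the structure $Q=(0,q)^t$.

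First I would compute the equation satisfied by $\bar{\bar U}(x,t):=\bar U(x_1-\delta^\varepsilon(\tilde x,t))$. By the chain rule $\bar{\bar U}_t = -\delta^\varepsilon_t\,\bar U'|_{x_1-\delta^\varepsilon}$ and $F^j(\bar{\bar U})_{x_j} = -\delta^\varepsilon_{x_j}A^j(\bar{\bar U})\bar U'|_{x_1-\delta^\varepsilon}$ for $j\neq 1$, while the $j=1$ flux term combines with $Q(\bar{\bar U})$ to vanish by the traveling-wave ODE for $\bar U$. Replacing all values $\bar U'|_{x_1-\delta^\varepsilon}$, $A^j(\bar U)|_{x_1-\delta^\varepsilon}$ by their values at $x_1$ plus a Taylor remainder, and noting that, since $\delta^\varepsilon$ is independent of $x_1$, $L(\delta^\varepsilon\bar U')(x_1) = -\sum_{j\neq 1}\delta^\varepsilon_{x_j}A^j(\bar U)\bar U'$ (the $j=1$ and $dQ$ contributions cancel by the profile equation), one obtains
\begin{equation*}
\bar{\bar U}_t+\sum_j F^j(\bar{\bar U})_{x_j}-Q(\bar{\bar U})=-(\partial_t-L)\delta^\varepsilon\,\bar U'(x_1)-R_{x_1},
\end{equation*}
where the remainder satisfies \eqref{5.20} because $\bar U'|_{x_1-\delta^\varepsilon}-\bar U'|_{x_1}=O(|\delta^\varepsilon|\,|\bar U''|)$ and similarly for $A^j$.

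Subtracting this from the equation $\tilde U_t+\sum_j F^j(\tilde U)_{x_j}=Q(\tilde U)$ gives
\begin{equation*}
U_t+\sum_j[F^j(\tilde U)-F^j(\bar{\bar U})]_{x_j}-[Q(\tilde U)-Q(\bar{\bar U})]=(\partial_t-L)\delta^\varepsilon\,\bar U'+R_{x_1}.
\end{equation*}
Now I would decompose each flux difference as
\begin{equation*}
F^j(\tilde U)-F^j(\bar{\bar U})=A^j(\bar U)U+[A^j(\bar{\bar U})-A^j(\bar U)]U+N^j,
\end{equation*}
where $N^j:=F^j(\tilde U)-F^j(\bar{\bar U})-A^j(\bar{\bar U})U=O(|U|^2)$ by Taylor expansion, and $S^j:=[A^j(\bar{\bar U})-A^j(\bar U)]U=O(|\bar{\bar U}-\bar U||U|)=O(|\delta^\varepsilon||\bar U'||U|)$ since $\bar{\bar U}-\bar U=O(|\delta^\varepsilon||\bar U'|)$. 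Applying the same decomposition to $Q(\tilde U)-Q(\bar{\bar U})$, and using that $Q$ has vanishing first component so the difference is $(0,I_r)^t$ times the analogous $r$-vector, yields $dQ(\bar U)U+(0,I_r)^t S^0+(0,I_r)^t N^0$ with $N^0,S^0$ satisfying \eqref{5.21}. Rearranging, moving $A^j(\bar U)U$ and $-dQ(\bar U)U$ to the left, and placing the divergence terms $N^j_{x_j}$, $S^j_{x_j}$ on the right completes the identity.

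There is no real obstacle here: the computation is entirely bookkeeping and runs parallel to Lemma \ref{quasi1}. The only delicate point is keeping track of which coefficient matrix ($\bar U$ versus $\bar{\bar U}$) each term is evaluated at so that the left-hand side is exactly $-L$ applied to $U$; this is precisely what generates the $S^j$ and $S^0$ corrections that measure the discrepancy between linearizing at $\bar U$ and at $\bar{\bar U}$, and it is why these corrections are of size $|\delta^\varepsilon||\bar U'||U|$ rather than of size $|U|^2$.
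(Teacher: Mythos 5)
Your proposal is correct and follows essentially the same route as the paper: derive the equation for $\bar{\bar U}=\bar U(x_1-\delta^\varepsilon)$ via the chain rule and the profile ODE, subtract from the equation for $\tilde U$, and Taylor-expand the flux and source differences, with $N^j$ collecting the quadratic-in-$U$ remainders and $S^j$ the coefficient discrepancy between $\bar U$ and $\bar{\bar U}$. The only (immaterial) differences are that the paper performs the Taylor expansion in two successive stages (first about $\bar{\bar U}$, then correcting $A^j(\bar{\bar U})\to A^j(\bar U)$) where you do it in one combined decomposition, and a sign convention for $R$, which is harmless since only the $O(\cdot)$ bound on $R$ is used.
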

\begin{proof}
Let $\tilde{U}$ be a solution of
\begin{align}\label{utilde}
&
\tilde{U}_t+\sum_{j=1}^{d}A^{j}(\tilde{U})\tilde{U}_{x_j}-Q(\tilde{U})=0.
\end{align}

For
$\bar{\bar{U}}(x,t)=\bar{U}(x_1-\delta^{\varepsilon}(\tilde{x},t))$,
there holds
\begin{align}\label{ubarbar}
&\bar{\bar{U}}_t
+\sum_{j=1}^{d}A^{j}(\bar{\bar{U}})\bar{\bar{U}}_{x_j}-Q(\bar{\bar{U}})\\
&=-\delta^{\varepsilon}_t\bar{U}'+\sum_{j=2}^{d}A^{j}(\bar{\bar{U}})(-\delta^{\varepsilon}_{x_j})\bar{U}'\nonumber\\
&=-\delta^{\varepsilon}_t\bar{U}'(x_1)+\sum_{j=2}^{d}A^{j}(\bar{U}(x_1))(-\delta^{\varepsilon}_{x_j})\bar{U}'(x_1)+R_{x_1}
\nonumber\\
&=-(\partial_t-L)\delta^{\varepsilon}\bar{U}'+R_{x_1}.\nonumber
\end{align}
If we subtract \eqref{ubarbar} from \eqref{utilde}, we have
\begin{align}\label{subtractfromutilde}
&U_t+\sum_{j=1}^{d}\big(A^{j}(\tilde{U})\tilde{U}_{x_j}-A^{j}(\bar{\bar{U}})\bar{\bar{U}}_{x_j}\big)
-\big(Q(\tilde{U})-Q(\bar{\bar{U}})\big)\\
&=U_t+\sum_{j=1}^{d}\big(F^{j}(\tilde{U})-F^{j}(\bar{\bar{U}})\big)_{x_j}-
\big(Q(\tilde{U})-Q(\bar{\bar{U}})\big)\nonumber\\
&=(\partial_t-L)\delta\bar{U}'-R_{x_1}\nonumber
\end{align}
By Taylor expansion of $F^{j}$ about $\bar{\bar{U}}$ and
\eqref{subtractfromutilde}, we have
\begin{align}\label{subtractfromutilde2}
&U_t+\sum_{j=1}^{d}\big(A^{j}(\bar{\bar{U}})U\big)_{x_j}- d Q
(\bar{\bar{U}})U\\
&=(\partial_t-L)\delta\bar{U}'-R_{x_1}+\sum_{j=1}^{d}N^{j}_{x_j}+
(0,I_r)^{t}N^{0}(U,U)
\nonumber
\end{align}
By Taylor expansion of $\bar{\bar{A}}^{j}$ about $\bar{U}$ and
\eqref{subtractfromutilde2}, we have
\begin{align}
&U_t+\sum_{j=1}^{d}\big(A^{j}(\bar{U})U\big)_{x_j}- d Q
(\bar{U})U\\
&=(\partial_t-L)\delta^{\varepsilon}\bar{U}'-R_{x_1}+\sum_{j=1}^{d}N^{j}_{x_j}+
(0,I_r)^{t}N^{0}
\nonumber\\
&\;\;\;\;\;\;\;\;+\sum_{j=1}^{d}\Big(\big(A^{j}(\bar{U})-A^{j}(\bar{\bar{U}})\big)U\Big)_{x_j}
-\big(d Q(\bar{U})-d Q(\bar{\bar{U}})\big) U \nonumber\\
&=(\partial_t-L)\delta^{\varepsilon}\bar{U}'-R_{x_1}+\sum_{j=1}^{d}N^{j}_{x_j}+
(0,I_r)^{t}N^{0}+\sum_{j=1}^{d} S^{j}_{x_j} +
(0,I^{r})S^{0}\nonumber
\end{align}
where
\begin{equation}
R_{x_1}=O(|\delta^{\varepsilon}_t||\bar{U}'||\delta^{\varepsilon}|+
|\nabla_{\tilde{x}}\delta^{\varepsilon} | |\bar{U}'|
|\delta^{\varepsilon}|)_{x_1}=O(|\delta^{\varepsilon}|(|\delta^{\varepsilon}_t|+|\nabla_{\tilde{x}}\delta^{\varepsilon}|)|\bar{U}''|),
\end{equation}
\begin{equation}
N^{j}=O(|U|^{2})\text{ for }j=0,1,...,d
\end{equation} and
\begin{equation}
S^{j}=O(|\delta^{\varepsilon}| |\bar{U}'| |U|)\text{ for
}j=0,1,...,d.
\end{equation}

\end{proof}

\begin{lem}
For $f \in \mathcal{C}^{2} \cap L^{1}$, there holds
\begin{align}\label{keycancel}
&\int_0^{t} \int G(x,t-s;y) (\partial_s-L_y) f(y,s) dyds\\
&\hspace{10mm}=f(x,t)-\int G(x,t;y)f(y,0)dy\nonumber
\end{align}
\end{lem}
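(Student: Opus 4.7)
The plan is to prove \eqref{keycancel} by the standard Duhamel device: integrate by parts in $s$ in the $\partial_s f$ piece, use the dual semigroup identity to convert the resulting time derivative of $G$ into an $L$--action, and then integrate by parts in $y$ to cancel the $L_y f$ piece. Split the left-hand side as $I_1-I_2$, where
\begin{equation*}
I_1:=\int_0^t\!\!\int G(x,t-s;y)\,\partial_s f(y,s)\,dy\,ds,\qquad
I_2:=\int_0^t\!\!\int G(x,t-s;y)\,L_y f(y,s)\,dy\,ds.
\end{equation*}
Integration by parts in $s$ in $I_1$, together with the initial condition $G(x,0;y)=\delta(x-y)$ from the definition of the Green function, yields
\begin{equation*}
I_1=f(x,t)-\int G(x,t;y)f(y,0)\,dy-\int_0^t\!\!\int \partial_s G(x,t-s;y)\,f(y,s)\,dy\,ds.
\end{equation*}

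The key identity needed is then the dual-variable equation
\begin{equation*}
\partial_\tau G(x,\tau;y)=L_y^{*}\,G(x,\tau;y)\qquad(\tau>0),
\end{equation*}
where $L_y^{*}$ denotes the formal adjoint of $L$ acting in the $y$-variable. This follows from the fact that $G(x,t;y)$ is the kernel of the $C^0$-semigroup $e^{Lt}$ generated by $L$ (Proposition~2.2): for any test functions $\varphi,\psi$ the pairing $\langle\psi,e^{Lt}\varphi\rangle=\langle e^{L^{*}t}\psi,\varphi\rangle$ gives exactly this identity in the distributional sense, which by the smoothing provided by the constant--in--$y_1$ coefficient structure away from the diagonal extends to a classical identity for $\tau>0$. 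Substituting $\tau=t-s$ gives $\partial_s G(x,t-s;y)=-L_y^{*}G(x,t-s;y)$, so that after integration by parts in $y$ (with $f\in \mathcal{C}^2\cap L^1$ ensuring no boundary contributions at spatial infinity, in combination with the exponential spatial decay of $G_{\lambda,\tilde\xi}$ and the spectral resolution formula \eqref{spectral}),
\begin{equation*}
-\int_0^t\!\!\int \partial_s G(x,t-s;y)f(y,s)\,dy\,ds
=\int_0^t\!\!\int G(x,t-s;y)L_y f(y,s)\,dy\,ds=I_2.
\end{equation*}
Combining the two displays gives $I_1-I_2=f(x,t)-\int G(x,t;y)f(y,0)\,dy$, which is \eqref{keycancel}.

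The main obstacle is a technical one: justifying the integration by parts both in $s$ (requiring that $s\mapsto G(x,t-s;y)f(y,s)$ is absolutely continuous up to the endpoints as an $L^1_y$--valued map, including the singular limit $s\to t$ where $G(x,0;y)=\delta(x-y)$) and in $y$ (requiring sufficient decay of $f$ and of $G$ in $y$ to kill boundary terms). The $s\to t$ limit is best handled by regularization: replace $G(x,t-s;y)$ by $G(x,t-s+\epsilon;y)$, carry out the above computation rigorously for $\epsilon>0$ using the $C^0$--semigroup property $e^{L(t-s+\epsilon)}=e^{L\epsilon}e^{L(t-s)}$, and pass to the limit $\epsilon\downarrow 0$ using $f\in \mathcal{C}^2$ (so that $f(\cdot,t)$ and $(\partial_s-L_y)f(\cdot,s)$ lie in the domain) and $f,\;(\partial_s-L_y)f\in L^1_{y}$ together with the $L^1\to L^2$ and $L^1\to L^1$ bounds on $G$ (the latter via finite speed of propagation and the low/high frequency decomposition) to pass the limit under the $dy\,ds$ integral.
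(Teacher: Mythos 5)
Your argument is correct and is essentially the paper's own proof: the paper likewise integrates by parts on $[0,t-\varepsilon]$, obtains the boundary terms $\int G(x,\varepsilon;y)f(y,t-\varepsilon)\,dy-\int G(x,t;y)f(y,0)\,dy$ plus $\int_0^{t-\varepsilon}\!\!\int(\partial_s-L_y)^{*}G\,f$, kills the latter by duality (your identity $\partial_\tau G=L_y^{*}G$), and lets $\varepsilon\to 0$. Your writeup merely makes explicit the separate $s$- and $y$-integrations by parts and the justification of the $\varepsilon\downarrow 0$ limit, which the paper leaves implicit.
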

\begin{proof} Integrating by parts, we have
\begin{align}
&\int_0^{t-\varepsilon} \int G(x,t-s;y) (\partial_s-L_y) f(y,s) dyds\nonumber\\
&=\int G(x,\varepsilon;y)f(y,t-\varepsilon)dy-\int G(x,t;y)f(y,0) dy\nonumber\\
&\;\;\;\;\;+ \int_0^{t-\varepsilon} \int
(\partial_s-L_y)^{*}G(x,t-s;y) f(y,s) dyds.
\end{align}
By duality and letting $\varepsilon\rightarrow 0$, we obtain the
result.

\end{proof}

We are now ready to prove our main result.

\begin{proof}[Proof of Theorem \ref{mainthm}]
By Lemma \ref{deltalem}, it is sufficient to show
\begin{equation}\label{resultmaineps}
\big|\tilde{U}(x,t)-\bar{U}(x_1-\delta^{\varepsilon}(\tilde{x},t))\big|_{L^{2}(x)}\leq
C\zeta_0(1+t)^{-(d-1)/4-1/2+\sigma}.
\end{equation}
Define
\begin{equation}\label{zeta}
\zeta(t):=\text{sup }_{0\leq s\leq t} |U(\cdot,s)|_{L^{2}}(1+s)^{(d-1)/4+1/2-\sigma}.
\end{equation}
\begin{claim}For all $t\geq 0$, there holds
\begin{equation}
\zeta(t) \leq C_1(\zeta^{2}(t)+\zeta_0\zeta(t)+\zeta_0^{2})\leq
C_2(\zeta_0+\zeta^{2}(t)).
\end{equation}
\end{claim}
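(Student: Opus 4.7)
The plan is to derive a Duhamel integral representation for $U$ and bound each piece using the low-frequency bounds of Section~\ref{LF}, the high-frequency estimate of Lemma~\ref{HFest}, and the damping estimate of Proposition~\ref{4.4}. First I would start from the equation for $U$ derived just above,
\[
(\partial_t - L)U = (\partial_t - L)\bigl[\delta^{\varepsilon}\bar{U}'\bigr] - R_{x_1} + \sum_j N^{j}_{x_j} + (0,I_r)^t N^{0} + \sum_j S^{j}_{x_j} + (0,I_r)^t S^{0},
\]
apply Duhamel's formula, and use the key cancellation identity \eqref{keycancel} on the shift-correction term to obtain
\[
U(x,t) = \int G(x,t;y)\bigl[U_0(y) - \delta^{\varepsilon}(\tilde y,0)\bar{U}'(y_1)\bigr]dy + \delta^{\varepsilon}(\tilde x, t)\bar{U}'(x_1) + \mathcal{I}_{\mathrm{NL}}(x,t),
\]
where $\mathcal{I}_{\mathrm{NL}}$ collects the time-convolution of $G$ against the nonlinear/source forcing. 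Each $G$ is then split as $G = G^{I} + G^{II}$.

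For the linear contribution, the leading-order piece $\bar{U}'(x_1)\Pi^{t}(x_1)\,g(\tilde{x},t)\ast U_0$ extracted from $G^{I}$ via \eqref{Gpig} is designed to cancel the explicit term $\delta^{\varepsilon}(t)\bar{U}'$: the choice of $\delta_0$ in \eqref{convdiffconst} is precisely the first moment of $U_0$ in the normal direction, and the remainder is controlled in $L^2$ by $C(|x_1 U_0|_{L^1} + |U_0|_{L^1})(1+t)^{-(d-1)/4 - 1/2} \le C\zeta_0(1+t)^{-(d-1)/4 - 1/2}$. The mollification error $|\delta^{\varepsilon}(t) - \delta(t)|_{L^2}$ is absorbed by Lemma~\ref{deltalem}, and the high-frequency piece is exponentially small by Lemma~\ref{HFest}: $|G^{II}\ast U_0|_{L^2} \lesssim e^{-\theta t}|U_0|_{H^{[d/2]+3}} \le e^{-\theta t}\zeta_0$.

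For the nonlinear forcing I would apply the $L^1\to L^2$ bounds \eqref{g11}--\eqref{g22} to the $G^{I}$ part (integrating by parts to move derivatives onto $G$, and using the fast-mode bound \eqref{fastpart} for the $(0,I_r)^t$-type terms $N^0$, $S^0$) and Lemma~\ref{HFest} to the $G^{II}$ part, controlling the required $H^{3}$ norm of the forcing via Moser inequalities together with the damping estimate of Proposition~\ref{4.4} (which bounds $|U|_{H^s}$ in terms of $|U|_{L^2}$, initial data, and an integrable $\epsilon(t)$). The relevant $L^1$ bounds are
\[
|N^j|_{L^1} \lesssim |U|_{L^2}^2 \lesssim \zeta^2(t)(1+s)^{-(d-1)/2-1+2\sigma},
\]
\[
|S^j|_{L^1} \le |\bar{U}'|_{L^2(x_1)}|\delta^{\varepsilon}|_{L^2(\tilde x)}|U|_{L^2(x)} \lesssim \zeta_0\zeta(t)(1+s)^{-(d-1)/2-1/2+\sigma}
\]
by Cauchy--Schwarz, and $|R|_{L^1} \lesssim \zeta_0^2(1+s)^{-(d-1)/2-1/2}$ via Lemma~\ref{deltalem}. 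Convolving each against the Green-function decay $(t-s+1)^{-(d-1)/4-1/2}$ yields contributions bounded by $C(1+t)^{-(d-1)/4-1/2+\sigma}$ times $\zeta^2(t)$, $\zeta_0\zeta(t)$, or $\zeta_0^2$, respectively, which together with the linear piece assembles into $\zeta(t) \le C_1(\zeta^2(t) + \zeta_0\zeta(t) + \zeta_0^2)$; the second inequality is then immediate.

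The main obstacle is the shift residual $R_{x_1}$, whose $L^1$ bound is purely quadratic in $\zeta_0$ and decays only like $(1+s)^{-(d-1)/2 - 1/2}$: for $d=2$ the convolution $\int_0^t(1+s)^{-1}(t-s+1)^{-3/4}\,ds \sim (\log t)(1+t)^{-3/4}$ produces a logarithmic loss, which is precisely what forces the positive $\sigma$ in the definition \eqref{zeta} of $\zeta(t)$; for $d \ge 3$ the exponent $-(d-1)/2-1/2 \le -3/2$ makes the convolution integrate cleanly and the argument closes with $\sigma = 0$, recovering the sharper rate stated in Theorem~\ref{mainthm}.
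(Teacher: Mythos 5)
Your proposal is correct and follows essentially the same route as the paper: Duhamel's principle combined with the cancellation identity \eqref{keycancel}, the $G^{I}/G^{II}$ splitting with the $L^{1}\to L^{2}$ low-frequency bounds \eqref{g11}--\eqref{g22}, \eqref{fastpart} and \eqref{Gpig}, the exponential high-frequency bound of Lemma \ref{HFest} fed by the damping estimate of Proposition \ref{4.4}, and the same $L^{1}$ bounds on $N^{j}$, $S^{j}$, $R$ followed by the convolution estimates. The only cosmetic difference is your bookkeeping of the linear term (cancelling $\delta^{\varepsilon}(t)\bar U'$ directly against the leading part of $\int G^{I}[U_0-\delta^{\varepsilon}_0\bar U']$ rather than bounding the two pieces of $I$ separately), and your diagnosis of the borderline $(1+s)^{-1}$ convolutions forcing $\sigma>0$ in $d=2$ matches the paper's computations for the $II_b$, $II_c$ terms.
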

From this result, it follows by continuous induction that
\begin{equation}
\zeta(t)\leq 2 C_2\zeta_0 \text{ for all } t\geq 0,
\end{equation}
provided $\zeta_0<\frac{1}{4C_2}$, i.e., $\zeta(t)$ remains small
for all $t\geq 0$. \smallbreak \textbf{Proof of claim.} Applying
Duhamel's principle, we can express
\begin{align}\label{duhamel}
U(x,t)&=\Big(\int G(x,t;y)U_0(y)dy - \int_0^{t}\int G(x,t-s;y)(\partial_s-L)\delta^{\varepsilon}\bar{U}'(y,s)dyds\Big)\\
&\;\;\;\;\;+\int_0^{t}\int G^{I}(x,t-s;y)(-R_{y_1}+\sum_{j=1}^{d}N^{j}_{y_j}+\sum_{j=1}^{d} S^{j}_{y_j})(y,s)dyds\nonumber\\
&\;\;\;\;\;+\int_0^{t}\int G^{II}(x,t-s;y)(-R_{y_1}+\sum_{j=1}^{d}N^{j}_{y_j}+\sum_{j=1}^{d} S^{j}_{y_j})(y,s)dyds\nonumber\\
&\;\;\;\;\;+\int_0^{t}\int G^{I}(x,t-s;y)\big((0,I_r)^{t}N^{0}+ (0,I_r)^{t}S^{0}\big)(y,s)dyds\nonumber\\
&\;\;\;\;\;+\int_0^{t}\int G^{II}(x,t-s;y)\big((0,I_r)^{t}N^{0}+ (0,I_r)^{t}S^{0}\big)(y,s)dyds\nonumber\\
&=I+II+III+IV+V\nonumber.
\end{align}

First, we establish that
\begin{equation}
|I|_{L^{2}}\leq C\zeta_0(1+t)^{-(d-1)/4-1/2}.
\end{equation}
Using \eqref{Gpig}, we have
\begin{align}\label{g1u0}
&\Big|\int G^{I}(x,t;y)U_0(y)dy\Big|_{L^{2}}\\
&\leq \Big|\bar{U}'(x_1)\int g(\tilde{x}-\tilde{y},t)\big(\Pi^{t}\int U(y_1,\tilde{y},0)dy_1\big)d\tilde{y}\Big|_{L^{2}}(x)\nonumber\\
&\;\;\;\;\;+ C\zeta_0 t^{-(d-1)/4-1/2}\nonumber\\
&=\Big|\bar{U}'(x_1)(\delta^{\varepsilon}(\tilde{x},t)-\delta(\tilde{x},t))\Big|_{L^{2}(x)}+C\zeta_0 t^{-(d-1)/4-1/2}\nonumber\\
&\leq C_*\zeta_0 t^{-(d-1)/4-1/2}\nonumber.
\end{align}
By standard $C^0$ semigroup theory, we obtain the short-time bound
\begin{equation}\label{stb}
\Big|\int G(x,t;y)U_0(y)dy\Big|_{L^{2}(x)}\leq C|U_0|_{L^{2}}\leq
C\zeta_0 \text{ for } t\leq 1.
\end{equation}
By \eqref{g1u0} and \eqref{stb}, we have
\begin{equation}
\Big|\int G^{I}(x,t;y)U_0(y)dy\Big|_{L^{2}(x)}\leq
C\zeta_0(1+t)^{-(d-1)/4-1/2}.
\end{equation}
By \eqref{hfestimate},
\begin{equation}
\Big|\int G^{II}(x,t;y)U_0(y)dy\Big|_{L^{2}} \leq e^{-\theta t}
|U_0|_{H^{3}}\leq C\zeta_0(1+t)^{-(d-1)/4-1/2}.
\end{equation}

On the other hand, by \eqref{keycancel}, we have
\begin{align}
&\Big|\int_0^{t}\int G(x,t-s;y)(\partial_s-L)\delta^{\varepsilon}\bar{U}'(y,s)dyds\Big|_{L^{2}}\\
&=\Big|\delta^{\varepsilon}\bar{U}'(x,t)-\int G(x,t;y)(\delta^{\varepsilon}\bar{U}')(y,0)dy\Big|_{L^{2}}\nonumber\\
&\leq \Big|\delta^{\varepsilon}\bar{U}'(x,t)-\bar{U}'(x_1)\int
g(\tilde{x}-\tilde{y},t)\delta^{\varepsilon}(\tilde{y},0)d\tilde{y}\Big|_{L^{2}}+C\zeta_0
t^{-(d-1)/4-1/2}\nonumber\\
&= C\zeta_0 t^{-(d-1)/4-1/2}.\nonumber
\end{align}
Combining this with the short time bound, we obtain
\begin{align}
&\Big|\int_0^{t}\int G(x,t-s;y)(\partial_s-L)\delta^{\varepsilon}\bar{U}'(y,s)dyds\Big|_{L^{2}}\\
&\leq C\zeta_0 (1+t)^{-(d-1)/4-1/2}.\nonumber
\end{align}




We now establish that
\begin{equation}
|II|_{L^{p}}\leq C\zeta_0 (1+t)^{-((d-1)/2)(1-1/p)-1/2}.
\end{equation}
Using \eqref{g22}, \eqref{deltabound}, and \eqref{5.21}, together
with the definition of $\zeta(t)$, we have
\begin{align}\label{IIa}
|II_a|_{L^{2}}&=\Big| \int_0^{t}\int
\partial_{x_j}G^{I}(x,t-s;y)N^{j}(y,s)dyds\Big|_{L^{2}}\\
&\leq C \int_0^{t} (1+t-s)^{-(d-1)/4-1/2} |U|_{L^{2}}^{2}(s)
ds\nonumber\\
&\leq C \zeta^{2}(t)
\int_0^{t}(1+t-s)^{-(d-1)/4-1/2}(1+s)^{-(d-1)/2-1+2\sigma}
ds\nonumber\\
&\leq  C\zeta^{2}(t)(1+t)^{-(d-1)/4-1/2+\sigma} \nonumber
\end{align}
and
\begin{align}\label{IIb}
|II_b|_{L^{2}}&=\Big| \int_0^{t}\int
\partial_{x_j}G^{I}(x,t-s;y)S^{j}(y,s)dyds\Big|_{L^{2}}\\
&\leq C\int_0^{t}(1+t-s)^{-(d-1)/4-1/2}|S^{j}|_{L^{1}}ds \nonumber\\
&\leq C\zeta_{0} \int_0^{t} (1+t-s)^{-(d-1)/4-1/2}
|\delta^{\varepsilon}|_{L^{2}}(s) |U|_{L^{2}}(s) ds\nonumber\\
&\leq C\zeta_0\zeta(t)(1+t)^{-(d-1)/4-1/2+\sigma}. \nonumber
\end{align}

The last inequality is true due to the following calculations;
\begin{align}
&\int_0^{t/2}(1+t-s)^{-(d-1)/4-1/2}(1+s)^{-(d-1)/4}(1+s)^{-(d-1)/4-1/2+\sigma}ds\\
&\leq (1+t/2)^{-(d-1)/4-1/2}\int_0^{t/2} (1+s)^{-(d-1)/2-1/2+\sigma}ds\nonumber\\
&\leq (1+t/2)^{-(d-1)/4-1/2}\big|(1+t/2)^{-(d-1)/2+1/2+\sigma}-1\big|\nonumber\\
&\leq C (1+t)^{-(d-1)/4-1/2+\sigma}\big|(1+t)^{-(d-1)/2+1/2}-(1+t)^{-\sigma}\big|\nonumber\\
&\leq C (1+t)^{-(d-1)/4-1/2+\sigma}\nonumber
\end{align}
It is true with arbitrary small $\sigma>0$ for $d=2$ and $\sigma=0$
for $d\geq 3$.

\begin{align}
&\int_{t/2}^{t}(1+t-s)^{-(d-1)/4-1/2}(1+s)^{-(d-1)/4}(1+s)^{-(d-1)/4-1/2+\sigma}ds\\
&\leq (1+t)^{-(d-1)/4-1/2+\sigma}(1+t)^{-(d-1)/4} \int_{t/2}^{t}(1+t-s)^{-(d-1)/4-1/2}ds \nonumber\\
&\leq (1+t)^{-(d-1)/4-1/2+\sigma}(1+t)^{-(d-1)/4}\int_{t/2}^{t}(1+t-s)^{-(d-1)/4-1/2+\epsilon}ds \nonumber\\
&\leq (1+t)^{-(d-1)/4-1/2+\sigma}(1+t)^{-(d-1)/2+1/2+\epsilon}\nonumber\\
&\leq (1+t)^{-(d-1)/4-1/2+\sigma}\nonumber.
\end{align}
The last inequality is true if we choose arbitrary small
$\epsilon>0$ for $d=3$ so that $-(d-1)/4-1/2+\epsilon\neq -1$ and
choose $\epsilon=0$ otherwise. \smallbreak Similarly, using
\eqref{g22}, \eqref{deltabound}, and \eqref{5.20},
\begin{align}\label{IIc}
|II_c|_{L^{2}}&=\Big|\int_0^{t}\int \partial_{x_1}G^{I}(x,t-s;y)R(y,s)dy ds \Big|_{L^{2}(x)}\\
&\leq
C\int_0^{t}(1+t-s)^{-(d-1)/4-1/2}|R|_{L^{1}}(s)ds\nonumber\\
&\leq C\int_0^{t}(1+t-s)^{-(d-1)/4-1/2}|\delta^{\varepsilon}|_{L^{2}}(s)|\delta_{x_j}^{\varepsilon}|_{L^{2}}(s)\nonumber\\
&\leq C\zeta_0^{2}\int_0^{t}(1+t-s)^{-(d-1)/4-1/2}(1+s)^{-(d-1)/2-1/2}ds\nonumber\\
&\leq C\zeta_0^{2} (1+t)^{-(d-1)/4-1/2}. \nonumber
\end{align}

By \eqref{IIa}, \eqref{IIb} and \eqref{IIc}, we have
\begin{equation}\label{IIre}
|II|_{L^{2}}\leq
C(\zeta^{2}(t)+\zeta_0\zeta(t)+\zeta_0^{2})(1+t)^{-(d-1)/4-1/2+\sigma}.
\end{equation}

We now establish that
\begin{equation}
|III|_{L^{2}}\leq
C(\zeta_0^{2}+\zeta_0\zeta(t)+\zeta^{2}(t))(1+t)^{-(d-1)/4-1/2+\sigma}.
\end{equation}

By \eqref{hfestimate}, \eqref{pro}, and \eqref{5.20}--\eqref{5.21},
we have
\begin{align}
&\Big|\int_0^{t}\int G^{II}(x,t-s;y)( N^{j}_{y_j}+ S^{j}_{y_j}+R_{y_1}) dyds\Big|_{L^{2}}\\
&\leq \int_0^{t} e^{-\theta (t-s)} |N^{j}_{x_j}+S^{j}_{x_j}+R_{x_1}|_{H^{3}}ds\nonumber\\
&\leq \int_0^{t} e^{-\theta (t-s)} |N^{j}+S^{j}+R|_{H^{4}}ds\nonumber\\
&\leq C\int_0^{t} e^{-\theta (t-s)}(|U|_{L^{\infty}}|U|_{H^{4}}+|\delta^{\varepsilon}|_{L^{\infty}}|U|_{H^{4}}+|\delta^{\varepsilon}|_{L^{\infty}}|\nabla_{\tilde{x}}\delta^{\varepsilon}|_{H^{4}})ds\nonumber\\
&\leq
C(\zeta_0^{2}+\zeta_0\zeta(t)+\zeta^{2}(t))(1+t)^{-(d-1)/4-1/2+\sigma}\nonumber
\end{align}
as long as $|U|_{H^{s}}$ and thus $|U|_{W^{1,\infty}}$ remains
sufficiently small. We shall verify in a moment that it indeed
remains small. \smallbreak Similarly, we can establish the estimates
for $IV$ and $V$. By the same proof as the one for $II$, together
with \eqref{fastpart}, we have
\begin{equation}
|IV|_{L^{2}}\leq C
(\zeta^{2}(t)+\zeta_0\zeta(t)+\zeta_0^{2})(1+t)^{-(d-1)/4-1/2+\sigma}.
\end{equation}

By an identical calculation as for the $III$ term except for the
fact that $V$ has one less derivatives, we can get
\begin{equation}
|V|_{L^{2}}(t)\leq
C(\zeta^{2}(t)+\zeta_0\zeta(t)+\zeta_0^{2})(1+t)^{-(d-1)/4-1/2+\sigma}.
\end{equation}
Therefore,
\begin{equation}
|U|_{L^{2}}(t)\leq
C_1(\zeta^{2}(t)+\zeta_0\zeta(t)+\zeta_0^{2})(1+t)^{-(d-1)/4-1/2+\sigma}.
\end{equation}
We have the desired inequality,
\begin{equation}\label{desired}
\zeta(t)\leq C_1(\zeta^{2}(t)+\zeta_0\zeta(t)+\zeta_0^{2})\leq
C_2(\zeta_0+\zeta^{2}(t))
\end{equation}
so long as $|U|_{H^{s}}\leq \varepsilon$ sufficiently small.
\smallbreak To complete the proof, we show that $|U|_{H^{s}}(t)\leq
\varepsilon$ remains small for all $t\geq 0$ if we choose
$|U|_{H^{s}}(0)\leq \zeta_0$ small enough, by the continuation
argument. By local well-posedness, for sufficiently small
$\varepsilon>0$, we can define
\begin{equation}\label{timeT} T:=\text{ sup }\{ \tau>0 :
|U|_{H^{s}}(\tau) <\varepsilon\}>0.
\end{equation}
As shown above, we have
\begin{equation}
\zeta(t)\leq C_2(\zeta_0+\zeta^{2}(t)) \text{ for } t\in[0,T),
\end{equation} which implies
\begin{equation}
\zeta(t)\leq 2C_2\zeta_0 \text{ for } t\in[0,T).
\end{equation}
Using \eqref{pro} again, we have, by choosing $\zeta_0$ so small
that there holds
\begin{equation}|U|_{H^{s}}(T)\leq e^{-\theta
T}\zeta_0 + \frac{2C_2^{2}\zeta_0}{\theta}<\varepsilon.
\end{equation}
By continuity of $H^{s}$-norm, there exists $h>0$ such that
$|U|_{H^{s}}(t)<\varepsilon$ for $t\in[0,T+h)$, which contradicts to
the definition of $T$. So, $T=\infty$, i.e.,
$|U|_{H^{s}}(t)<\varepsilon$ for $t\in[0,\infty)$. Thus, the desired
inequality $\eqref{desired}$ is true for all $t\geq 0$, which
implies that
\begin{equation}
\zeta(t)\leq 2C_2\zeta_0 \text{ for all } t\in[0,\infty).
\end{equation}
This completes the proof.
\end{proof}

\begin{theo}If $|\tilde{U}_0-\bar{U}|_{H^{[d/2]+5}}\leq \zeta_0$ sufficiently small, then there holds
\begin{equation}
\big|\tilde{U}(x,t)-\bar{U}(x_1-\delta^{\varepsilon}(\tilde{x},t))\big|_{L^{\infty}(x)}\leq
C\zeta_0 (1+t)^{-(d-1)/2-1/2}.
\end{equation}
\end{theo}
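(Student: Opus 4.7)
The plan is to repeat the Duhamel-based nonlinear iteration from the proof of Theorem \ref{mainthm}, closing the argument in $L^\infty$ by replacing the $L^1\to L^2$ operator bounds with their $L^1\to L^\infty$ analogues and by strengthening the high-frequency estimate through Sobolev embedding. Only the linear ingredients need upgrading; the nonlinear source terms will be fed in through the $L^2$ decay rate already established in Theorem \ref{mainthm}.

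First I would upgrade Lemma \ref{Lqp} to $p=\infty$. Inspection of the explicit decomposition \eqref{g1description} shows that $\bar U'(x_1)\bar g^+$ and $K^+$ have $L^\infty(x)$ norms of order $t^{-(d-1)/2}$ and $t^{-d/2}$ respectively, and differentiating once in $y$ gains a factor $t^{-1/2}$, yielding
\[
\Big|\int G^I(x,t;y)f(y)\,dy\Big|_{L^\infty(x)}\le C(1+t)^{-(d-1)/2}|f|_{L^1},
\]
\[
\Big|\int G^I_{y_j}(x,t;y)f(y)\,dy\Big|_{L^\infty(x)}\le C(1+t)^{-(d-1)/2-1/2}|f|_{L^1}.
\]
Next I would strengthen Lemma \ref{HFest} to $|S^{II}f|_{L^\infty}\le Ce^{-\theta t}|f|_{H^{[d/2]+4}}$, which follows by applying the existing $H^3\to L^2$ resolvent argument to the equation differentiated $[d/2]+1$ times in $x$ and invoking the Sobolev embedding $H^{[d/2]+1}\hookrightarrow L^\infty$.

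With these tools I would repeat the Duhamel decomposition $U=I+II+III+IV+V$ from \eqref{duhamel} and estimate each piece in $L^\infty(x)$. For the linear piece $I$, the $L^1\to L^\infty$ bound combined with the cancellation in \eqref{g1u0} between $\bar U'\delta$ and the projected initial data yields $|I|_{L^\infty}\le C\zeta_0(1+t)^{-(d-1)/2-1/2}$, while the strengthened high-frequency bound consumes $H^{[d/2]+4}\subset H^{[d/2]+5}$ regularity of $U_0$. For the nonlinear convolution terms $II$ and $IV$ I would use $|N^j|_{L^1}\lesssim |U|_{L^2}^2$ and $|S^j|_{L^1}\lesssim |\delta^\varepsilon|_{L^2}|U|_{L^2}$, insert the $L^2$ bound $|U|_{L^2}(s)\le C\zeta_0(1+s)^{-(d-1)/4-1/2+\sigma}$ from Theorem \ref{mainthm}, and convolve against $G^I_{y_j}$ to obtain the standard integral
\[
\int_0^t(1+t-s)^{-(d-1)/2-1/2}(1+s)^{-(d-1)/2-1+2\sigma}\,ds\le C(1+t)^{-(d-1)/2-1/2},
\]
valid for $d\ge 2$ provided $\sigma$ is chosen small enough. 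The high-frequency terms $III$ and $V$ are handled by the strengthened Lemma \ref{HFest}, which reduces matters to $H^{[d/2]+4}$ control of $N^j$, $S^j$, $R$; this is dominated by $|U|_{H^s}$ via Moser's inequalities and the damping estimate \eqref{pro}.

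The main obstacle will be the borderline convolution exponent in dimension $d=2$, where $(d-1)/2+1/2=1$ and the Beta-function bound threatens a logarithmic loss. To avoid this I would split the $s$-integral at $t/2$, using the faster decay of the Green function kernel on $[t/2,t]$ against the $L^1$-in-time integrability of the source $(1+s)^{-(d-1)/2-1+2\sigma}$ on $[0,t/2]$, with $\sigma>0$ chosen strictly smaller than the margin $1/2$ allowed in Theorem \ref{mainthm}. Once each Duhamel piece satisfies $|\cdot|_{L^\infty}\le C\zeta_0(1+t)^{-(d-1)/2-1/2}$, a continuation argument identical in structure to the one at the end of the proof of Theorem \ref{mainthm}, applied now to $\eta(t):=\sup_{0\le s\le t}(1+s)^{(d-1)/2+1/2}|U|_{L^\infty}(s)$, closes the bootstrap inequality $\eta(t)\le C_1(\zeta_0+\zeta_0\eta(t)+\eta(t)^2)$ and yields the claimed decay rate.
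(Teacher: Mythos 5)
Your proposal is correct and follows essentially the same route as the paper's own proof: the paper likewise re-runs the Duhamel decomposition in $L^\infty$, using the $p=\infty$ case of the $L^1\to L^p$ Green function bounds, the Sobolev embedding $H^{[d/2]+1}\hookrightarrow L^\infty$ combined with the high-frequency estimate (whence the $H^{[d/2]+5}$ hypothesis), and the already-established $L^2$ decay of Theorem \ref{mainthm} to control the nonlinear sources. The only superfluous element is your closing bootstrap in $\eta(t)$, which is not needed once the sources are majorized by the previously proven $L^2$ rate; the paper obtains the $L^\infty$ bound in a single pass.
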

\begin{proof}
Using the expression \eqref{duhamel} similarly as in the previous
theorem, we get the $L^{\infty}$ bounds for each term.
\begin{equation}
|I|_{L^{\infty}}\leq C\zeta_0(1+t)^{-(d-1)/2-1/2}.
\end{equation}

Using \eqref{g22}, \eqref{deltabound} and \eqref{resultmain}, we
have
\begin{align}
|II_a|_{L^{\infty}}&\leq \int_0^{t} |G_{x_j}^{I}|_{L^{\infty}}(t-s)|U|_{L^{2}}^{2}(s)ds\nonumber\\
&\leq C\zeta^{2}_0\int_0^{t} (1+t-s)^{-(d-1)/2-1/2}(1+s)^{-(d-1)/2-1+2\sigma} ds\nonumber\\
&\leq C\zeta^{2}_0 (1+t)^{-(d-1)/2-1/2},\nonumber
\end{align}

\begin{align}
|II_b|_{L^{\infty}}&\leq \int_0^{t} |G_{x_j}^{I}|_{L^{\infty}}(t-s)|U|_{L^{2}}(s)|\delta^{\varepsilon}|_{L^{2}}(s)ds\nonumber\\
&\leq C\zeta_0 \int_0^{t} (1+t-s)^{-(d-1)/2-1/2}(1+s)^{-(d-1)/4-1/2+\sigma}(1+s)^{-(d-1)/4} ds\nonumber\\
&\leq C\zeta_0 (1+t)^{-(d-1)/2-1/2+\sigma}\nonumber
\end{align}
and
\begin{align}
|II_c|_{L^{\infty}}&\leq \int_0^{t} |G_{x_j}^{I}|_{L^{\infty}}(t-s)|\delta^{\varepsilon}_{x_j}|_{L^{2}}(s)|\delta^{\varepsilon}|_{L^{2}}(s)ds\nonumber\\
&\leq C\zeta^{2}_0 \int_0^{t} (1+t-s)^{-(d-1)/2-1/2}(1+s)^{-(d-1)/4-1/2}(1+s)^{-(d-1)/4} ds\nonumber\\
&\leq C\zeta^{2}_0 (1+t)^{-(d-1)/2-1/2}.\nonumber
\end{align}

We establish that
\begin{equation}\label{333}
|III|_{L^{\infty}}\leq C\zeta_0(1+t)^{-(d-1)/2-1/2+\sigma}.
\end{equation}
By \eqref{hfestimate}, we have
\begin{align}
\Big|\int_0^{t}\int G^{II}(x,t-s;y) N^{j}_{y_j}
dyds\Big|_{L^{\infty}}&\leq \Big|\int_0^{t}
\int G^{II}(x,t-s;y) N^{j}_{y_j} dyds\Big|_{H^{[d/2]+1}}\nonumber\\
&\leq \int_0^{t} e^{-\theta (t-s)} |N^{j}_{x_j}|_{H^{[d/2]+4}}ds\nonumber\\
&\leq C\int_0^{t} e^{-\theta (t-s)}|U|_{L^{\infty}}|U|_{H^{[d/2]+5}}ds\nonumber\\
&\leq C\zeta^{2}_0(1+t)^{-(d-1)/2-1+2\sigma}\nonumber,
\end{align}
\begin{align}
\Big|\int_0^{t}\int G^{II}(x,t-s;y) S^{j}_{y_j}
dyds\Big|_{L^{\infty}}&\leq
\Big|\int_0^{t}\int G^{II}(x,t-s;y) S^{j}_{y_j} dyds\Big|_{H^{[d/2]+1}}\nonumber\\
&\leq \int_0^{t} e^{-\theta (t-s)} |S^{j}_{x_j}|_{H^{[d/2]+4}}ds\nonumber\\
&\leq C\int_0^{t} e^{-\theta (t-s)}|\delta^{\varepsilon}|_{L^{\infty}}|U|_{H^{{[d/2]+5}}}ds\nonumber\\
&\leq C\zeta_0^{2}(1+t)^{-3(d-1)/4-1/2+\sigma}\nonumber,
\end{align} and
\begin{align}
\Big|\int_0^{t}\int G^{II}(x,t-s;y) R_{y_1}
dyds\Big|_{L^{\infty}}&\leq \Big|\int_0^{t}\int
G^{II}(x,t-s;y) R_{y_1} dyds\Big|_{H^{[d/2]+1}}\nonumber\\
&\leq
\int_0^{t} e^{-\theta (t-s)} |R_{x_1}|_{H^{[d/2]+4}}ds\nonumber\\
&\leq C\zeta_0^{2} (1+t)^{-(d-1)/2-5/2}\nonumber.
\end{align} Thus we have established \eqref{333}.
The identical calculation as in the estimation of $III$ gives the
desired bound for $V$, which is
\begin{equation}|V|_{L^{\infty}}\leq  C\zeta^{2}_0(1+t)^{-(d-1)/2-1/2+\sigma}.
\end{equation}
Therefore, we have
\begin{equation}
|U|_{L^{\infty}}\leq  C\zeta^{2}_0(1+t)^{-(d-1)/2-1/2+\sigma}.
\end{equation}

\end{proof}

\appendix
\section{Expansion of the Fourier symbol}
\label{appA}

{\it Low frequency expansion.}
We carry out the expansion of $P(\xi)$ in $\xi$ about zero.\\
\begin{equation}
P(\xi)=dQ-i\sum_{j=1}^{d}\xi_jA^{j}=\begin{pmatrix} 0 & 0\\ q_u &
q_v
\end{pmatrix}-i\sum_{j=1}^{d}\xi_j
\begin{pmatrix}
f^{j}_u&f^{j}_v\\
g^{j}_u&g^{j}_v
\end{pmatrix}
\end{equation}

\begin{claim}
To the second order, dispersion relations
\begin{equation}
\lambda(\xi)=\sigma(dQ-i\sum_{j=1}^{d}\xi_j A^{j}),\;\;\;
\lambda(0)=0
\end{equation}
\end{claim}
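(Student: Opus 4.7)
The plan is to prove this by standard analytic matrix perturbation theory applied to the symbol $P(\xi)=dQ-i\sum_j\xi_j A^j$, exploiting the block structure at $\xi=0$ and the equilibrium condition \eqref{cond}. Concretely, at $\xi=0$ we have $P(0)=dQ=\begin{pmatrix}0&0\\ q_u&q_v\end{pmatrix}$, whose spectrum consists of the $r$ eigenvalues of $q_v(u,v_*(u))$, which lie in $\{\Re\mu<0\}$ by \eqref{cond}, together with a simple eigenvalue at $\lambda=0$. The right/left null vectors are
$$
R=(1,-q_v^{-1}q_u)^t=:V^1_1,\qquad \tilde L=(1,0_r),
$$
with the normalization $\tilde L R=1$; note that $\tilde L$ is immediate from the fact that the first row of $dQ$ vanishes. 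Since $0$ is an isolated simple eigenvalue of $P(0)$, the Kato analytic perturbation theorem gives a unique analytic branch $\lambda(\xi)$ of eigenvalues of $P(\xi)$ with $\lambda(0)=0$, together with analytic right/left eigenvectors $R(\xi),\tilde L(\xi)$.

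Next I would compute the linear term by the standard first-order perturbation formula, $\lambda'(0)\cdot\xi=\tilde L\,[\partial_\xi P(0)\xi]\,R=-i\sum_j\xi_j\,\tilde L A^j R$. Direct calculation with the block forms of $A^j$ and $R$ gives $\tilde L A^j R=f^j_u-f^j_v q_v^{-1}q_u=df^j_*(u)=a_j^*$, reproducing
$$
\lambda(\xi)=-i\xi\cdot a^*+O(|\xi|^2),
$$
i.e., the hyperbolic characteristic speed of the relaxed equation \eqref{zeroth}. For the quadratic term, I would differentiate the eigenvalue relation $P(\xi)R(\xi)=\lambda(\xi)R(\xi)$ twice, solve the first-order corrector equation
$$
dQ\,R_1(\xi)=\bigl(\lambda'(0)\xi\,I+i\sum_j\xi_j A^j\bigr)R,
$$
which is solvable on $(\mathrm{Range}\,\tilde L)^\perp$ because the zero eigenvalue is simple and the right-hand side automatically lies in the range of $dQ$ (as $\tilde L$ annihilates it, by the first-order calculation), and then apply $\tilde L$ to the second-order equation to obtain
$$
\lambda''(0)(\xi,\xi)=2\tilde L\bigl[-i\sum_j\xi_j A^j\bigr]R_1(\xi).
$$
A block computation, using $q_v^{-1}$ to invert $dQ$ on the $v$-component and the explicit form of $R$, reproduces exactly the Chapman–Enskog viscosity $-\xi^t B^*\xi$ with entries $b^{jk}_*$ as in \eqref{bjk}; I will match coefficients component by component, writing $R_1(\xi)=(0,W(\xi))^t$ (after a choice of normalization $\tilde L R_1=0$) with $W(\xi)=-q_v^{-1}(\hbox{first-order residual})$ and then symmetrizing the $j\neq k$ contributions to recover the symmetric form of $B^*$.

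The main obstacle is not conceptual but the second-order bookkeeping: verifying that the pseudoinverse computation on the $v$-block, combined with the projection $I-R\tilde L$, collapses to exactly the expression \eqref{bjk}, including the factor $1/2$ symmetrization for the off-diagonal entries. This is a routine but lengthy algebraic calculation. A convenient way to organize it is to substitute the parametrization $v=v_*(u)+\eps w$ into $P(\xi)R(\xi)=\lambda(\xi)R(\xi)$ and expand in $\eps$ simultaneously with $\xi$, which encodes the equilibrium slow-manifold constraint automatically and makes the $-q_v^{-1}q_u$ blocks appear naturally at each step. Once the quadratic expansion is in hand, the higher-order remainder $O(|\xi|^3)$ follows from analyticity of $\lambda(\xi)$ near $\xi=0$, and the formula is then fed into \eqref{mr+1} via the dispersion relation $\mu=i\xi_1$, solved for $\xi_1$ in terms of $(\lambda,\tilde\xi)$, as already carried out in the body of the text.
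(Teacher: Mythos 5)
Your proposal is correct and follows essentially the same route as the paper's Appendix \ref{appA}: the paper likewise posits the expansions of $\lambda(\xi)$ and $V(\xi)$, collects terms order by order, reads off $a^*_j$ and $b^*_{jk}$ from the first coordinate equation (which is exactly pairing with $\tilde L=(1,0_r)$), and solves the correctors on the $v$-block via $q_v^{-1}$, including the $1/2$ symmetrization for $j\neq k$. The only difference is presentational — you invoke Kato's theorem explicitly to justify analyticity of the branch, which the paper leaves implicit.
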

are given by
\begin{align}
\lambda(\xi)&=-i\xi\cdot a^{*}-\xi^{t} B^{*} \xi+\cdot\cdot\cdot
\;\;\;
\end{align} and
\begin{equation}
V(\xi)=V^{0}+\sum_{j=1}^{d}\xi_j V^{1}_{j}
+\sum_{j,k=1}^{d}\xi_j\xi_k V_{jk}^{2}+\cdot\cdot\cdot
\end{equation}
with $a=(a_{1},a_{2},...,a_{d})$ and $B^{*}=\Big[b^{*}_{jk}\Big]_{j,k=1}^{d}$, where
\begin{equation}
a^{*}_j=f^{j}_u-f^{j}_v q_v^{-1} q_u,
\end{equation}

\begin{align}
b_{jk}^{*}=
\left\{%
\begin{array}{ll}
 - f^{j}_v q_v^{-1} (g^{j}_u-g^{j}_v q_v^{-1} q_u-( f^{j}_u-f^{j}_v q_v^{-1}
q_u)q_v^{-1} q_u) & ,\text{ if } j=k \\
 -\frac{1}{2}\Big(f^{j}_v q_v^{-1}(g^{j}_u-g^{j}_v q_v^{-1}
q_u+(f^{k}_u-f^{k}_v q_v^{-1} q_u) q_v^{-1} q_u)\\
\;\;\;
+f^{k}_v q_v^{-1}(g^{k}_u-g^{k}_v q_v^{-1} q_u+(f^{j}_u-f^{j}_v q_v^{-1} q_u) q_v^{-1} q_u)\Big) & ,\text{ if  } j\neq k \\
\end{array}%
\right.,
\end{align}

\begin{equation}
V^{0}=\begin{pmatrix} 1 \\ -q_v^{-1}q_u \end{pmatrix},
\end{equation}
and
\begin{equation}
V^{1}_j=\begin{pmatrix} 1\\s^{1}_j\end{pmatrix}=\begin{pmatrix}
1\\-q_v^{-1}q_u +i q_v^{-1}(g^{j}_u-g^{j}_v q_v^{-1} q_u+a^{*}_j
q_v^{-1} q_u)\end{pmatrix}.\end{equation}

\begin{proof}
Set
\begin{align}
0&=\big(dQ-i\sum_{j=1}^{d}\xi_jA^{j}-\lambda(\xi)\big)V(\xi)\\
&=\big(dQ-i\sum_{j=1}^{d}\xi_jA^{j}+i\sum_{j=1}^{d}\xi_j
a^{*}_jI+\sum_{j,k=1}^{d}\xi_j\xi_k b_{jk}^{*}I+\cdot\cdot\cdot\big)\nonumber\\
&\;\;\;\;\;\;\;\;\;\;\;\;\;\;\;\;\;\;\;\;\;\;\times
\big(V^{0}+\sum_{j=1}^{d}\xi_j V^{1}_j +\sum_{j,k=1}^{d}\xi_j\xi_k
V_{jk}^{2}+\cdot\cdot\cdot\big).\nonumber
\end{align}
and let $V^{m}_j=\begin{pmatrix}r^{m}_j\\s^{m}_j\end{pmatrix}$ for
$m=0,1,2$. Collecting the 0th order term, we have
\begin{equation}
dQV^{0}=0.
\end{equation}
which yields
\begin{equation}V^{0}=\begin{pmatrix} r^{0} \\ -q_v^{-1}q_u
r^{0}\end{pmatrix}\end{equation}
 Collecting the 1st order terms, we have

\begin{equation}dQV^{1}_j-i(A^{j}- a_j^{*}I)V^{0}=0 \text{ for }j=1,2,...,d.\end{equation}

Examining the first coordinate, we have
\begin{equation}0=i(f^{j}_u-f^{j}_v q_v^{-1} q_u-a_j^{*}I)r^{0}\end{equation}
So, $r^{0}$ is the right eigenvector of $f^{j}_u-f^{j}_v q_v^{-1}
q_u$ corresponding to the eigenvalue $a^{*}_j$. Let $l^{0}$ be the
counterpart left-eigenvector. Then, the eigenvalue $a^{*}_j$ is
given by
\begin{equation}
a_j^{*}=l^{0}(f^{j}_u-f^{j}_v q_v^{-1} q_u)r^{0}.
\end{equation}

Examining the second coordinate equation, we have
\begin{equation}q_u r^{1}_j+q_v s^{1}_j=i(g^{j}_u-g^{j}_v q_v^{-1}
q_u+a^{*}_j q_v^{-1} q_u)r^{0},\end{equation} which yields
\begin{equation}s^{1}_j=-q_v^{-1}q_u r^{1}_j+
i q_v^{-1}(g^{j}_u-g^{j}_v q_v^{-1} q_u+a^{*}_j q_v^{-1}
q_u)r^{0}.\end{equation}
 So, $V^{1}_j=\begin{pmatrix}
r^{1}_j\\s^{1}_j\end{pmatrix}=\begin{pmatrix} r^{1}_j\\-q_v^{-1}q_u
r^{1}_j+i q_v^{-1}(g^{j}_u-g^{j}_v q_v^{-1} q_u+a^{*}_j q_v^{-1}
q_u)r^{0}\end{pmatrix}$.

\smallbreak Collecting 2nd order terms ($\xi_j\xi_k$ term),
\begin{equation}b^{*}_{jk}V^{0}+i(a_j^{*}I-A^{j})V^{1}_k+dQV^{2}_{jk}=0\end{equation}
For $j=k$, the first coordinate equation yields
\begin{align}
&b^{*}_{jj}r^{0}+i\Big((a^{*}_j-f^{j}_u)r^{1}_j+f^{j}_v\big(q_v^{-1}q_u
r^{1}_j-iq_v^{-1}(g^{j}_u-g^{j}_v q_v^{-1} q_u+a^{*}_j q_v^{-1}
q_u)r^{0}\big)\Big)
\\
&=b^{*}_{jj}r^{0}+i(a^{*}_j-f^{j}_u+f^{j}_v q_v^{-1}q_u
)r^{1}_j+f^{j}_v q_v^{-1}(g^{j}_u-g^{j}_v q_v^{-1} q_u+a^{*}_j
q_v^{-1} q_u)r^{0}=0\nonumber
\end{align}
For $j\neq k$, we have
\begin{equation}
b^{*}_{jk}V^{0}+i(a_j^{*}I-A^{j})V^{1}_k+dQV^{2}_{jk}+\beta^{*}_{kj}V^{0}+i(a_k^{*}I-A^{k})V^{1}_j+dQV^{2}_{kj}=0.
\end{equation}
The first coordinate equation gives
\begin{align}
&b^{*}_{jk}r^{0}+i(a^{*}_j-f^{j}_u+f^{j}_v q_v^{-1}q_u )r^{1}_k+f^{j}_v q_v^{-1}(g^{j}_u-g^{j}_v q_v^{-1} q_u+a^{*}_k q_v^{-1} q_u)r^{0}\\
&\;\;\;+b^{*}_{kj}r^{0}+i(a^{*}_k-f^{k}_u+f^{k}_v q_v^{-1}q_u
)r^{1}_j+f^{k}_v q_v^{-1}(g^{k}_u-g^{k}_v q_v^{-1} q_u+a^{*}_j
q_v^{-1} q_u)r^{0}
\nonumber\\
&= 2b^{*}_{jk}r^{0}+i(a^{*}_j-f^{j}_u+f^{j}_v q_v^{-1}q_u )r^{1}_k+f^{j}_v q_v^{-1}(g^{j}_u-g^{j}_v q_v^{-1} q_u+a^{*}_k q_v^{-1} q_u)r^{0}\nonumber\\
&\;\;\;+i(a^{*}_k-f^{k}_u+f^{k}_v q_v^{-1}q_u )r^{1}_j+f^{k}_v
q_v^{-1} (g^{k}_u-g^{k}_v q_v^{-1} q_u+a^{*}_j q_v^{-1}
q_u)r^{0}=0\nonumber
\end{align}
For the scalar case, i.e., $n=1$, we will denote, for simplicity,
$a_l^{*}=(a_{l1}^{*},...,a^{*}_{ld})$ by
$a^{*}=(a_1^{*},...,a^{*}_d)$. Then, we have
\begin{equation}
V^{0}=\begin{pmatrix} 1 \\ -q_v^{-1}q_u \end{pmatrix},
\end{equation}

\begin{equation}\label{ajstar}
a_j^{*}=f^{j}_u-f^{j}_v q_v^{-1} q_u,
\end{equation}

\begin{align}b^{*}_{jj}&=- f^{j}_v q_v^{-1}(g^{j}_u-g^{j}_v q_v^{-1} q_u-a^{*}_j
q_v^{-1} q_u)\label{bjj}\\
&=- f^{j}_v q_v^{-1} (g^{j}_u-g^{j}_v q_v^{-1} q_u-( f^{j}_u-f^{j}_v
q_v^{-1} q_u)q_v^{-1} q_u)\nonumber,
\end{align}
\begin{align}
b^{*}_{jk}&=-\frac{1}{2}\Big(f^{j}_v q_v^{-1}(g^{j}_u-g^{j}_v
q_v^{-1} q_u+a_k^{*} q_v^{-1} q_u) +f^{k}_v q_v^{-1}(g^{k}_u-g^{k}_v
q_v^{-1} q_u+a_j^{*} q_v^{-1} q_u)\Big)
\label{Abjk}\\
&=-\frac{1}{2}\Big(f^{j}_v q_v^{-1}(g^{j}_u-g^{j}_v q_v^{-1}
q_u+(f^{k}_u-f^{k}_v q_v^{-1} q_u) q_v^{-1} q_u)\nonumber\\
& \;\;\;\;\;\;\;\;\;\;\;\;\;\;\;\;\;\;\;\; +f^{k}_v
q_v^{-1}(g^{k}_u-g^{k}_v q_v^{-1} q_u+(f^{j}_u-f^{j}_v q_v^{-1} q_u)
q_v^{-1} q_u)\Big)\nonumber
\end{align}
and
\begin{equation}\label{Vj1}
V^{1}_j=\begin{pmatrix} 1\\s^{1}_j\end{pmatrix}=\begin{pmatrix}
1\\-q_v^{-1}q_u +i q_v^{-1}(g^{j}_u-g^{j}_v q_v^{-1} q_u+a^{*}_j
q_v^{-1} q_u)\end{pmatrix}.
\end{equation}
\end{proof}
Moreover, we let $\mathcal{B}^{*}$ be the viscosity matrix
$[b_{jk}^{*}]$ as in \eqref{bjk} and write
\begin{equation}\label{BBstar}
\mathcal{B}^{*}=b^{*}_{11}\begin{pmatrix} 1 & -b^{* t}\\
b^{*} & B^{*}
\end{pmatrix}
\end{equation}
where $b^{*}\in \RR^{d-1}$ and $B^{*}\in \RR^{(d-1)\times(d-1)}.$

\section{Asymptotic ODE: gap and conjugation lemmas}
\label{appB}

Consider a general family of first-order ODE
\begin{equation}
\label{gfirstorder} \WW'-{\mathbb A}(x_1, \Lambda)\WW=\FF
\end{equation}
indexed by a spectral parameter $\Lambda \in \Omega \subset \CC^m$,
where $W\in \CC^N$, $x_1\in \RR$ and ``$'$'' denotes $d/dx_1$.
\begin{ass}\label{h0}{$\,$}
\medbreak \textup{(h0) } Coefficient ${\mathbb A}(\cdot,\Lambda)$,
considered as a function from $\Omega$ into
$C^0(x_1)$ is analytic in $\Lambda$. Moreover, ${\mathbb A}(\cdot,
\Lambda)$ approaches exponentially to limits $\mA_\pm$ as $x_1\to
\pm \infty$, with uniform exponential decay estimates
\begin{equation}
\label{expdecay2} |(\partial/\partial x_1)^k(\mA- \mA_\pm)| \le
C_1e^{-\theta|x_1|/C_2}, \, \quad \text{\rm for } x_1\gtrless 0, \,
0\le k\le K,
\end{equation}
$C_j$, $\theta>0$, on compact subsets of $\Omega $.
\end{ass}
\medbreak

\medbreak

\begin{lem}[{The gap lemma [KS, GZ, ZH]}]
\label{gaplemma} Consider the homogeneous version $\FF\equiv 0$ of
\eqref{gfirstorder}, under assumption (h0). If $V^-(\Lambda)$ is an
eigenvector of $\mA_-$ with eigenvalue $\mu(\Lambda)$, both analytic
in $\Lambda$, then there exists a solution of \eqref{gfirstorder} of
form
\begin{equation}
 \WW(x_1, \Lambda) = V (x_1,\Lambda ) e^{\mu(\Lambda) x_1},
\end{equation}
where $V$ is $C^{1}$ in $x_1$ and locally analytic in $\Lambda$ and,
for any fixed $\btheta < \theta$, satisfies
\begin{equation}
\label{3.6g} V(x_1,\Lambda )=  V^-(\Lambda ) + \bfO (e^{-\bar
\theta|x_1|}|V^- (\Lambda)|),\quad x_1 < 0.
\end{equation}
\end{lem}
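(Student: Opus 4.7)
The plan is to reduce the problem to a perturbation equation around the limiting solution $V^-$ and close by Banach contraction in an exponentially weighted space on a left half-line. Substituting $\WW(x_1,\Lambda) = V(x_1,\Lambda) e^{\mu(\Lambda)x_1}$ into the homogeneous version of \eqref{gfirstorder} yields $V' = (\mA(x_1,\Lambda) - \mu(\Lambda)I)V$. Since $(\mA_- - \mu I)V^- = 0$ by hypothesis, the constant $V^-$ solves the limiting equation, so the task reduces to producing $Y := V - V^-$ decaying at rate $e^{\btheta x_1}$ as $x_1 \to -\infty$. Setting $M := \mA_- - \mu I$, the perturbation satisfies
\begin{equation}
Y' = MY + (\mA(x_1,\Lambda) - \mA_-)(V^- + Y),
\end{equation}
with forcing inheriting the exponential decay of $\mA - \mA_-$ from (h0).

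Next, I spectrally split $M$ via complementary projections $\pi_1,\pi_2$ corresponding, respectively, to eigenvalues of $M$ with real part $\ge \btheta$ and $< \btheta$, the threshold $\btheta$ being chosen to avoid the discrete spectrum. The Dunford calculus then delivers base-flow bounds
\begin{equation}
|e^{Mt}\pi_1| \le Ce^{\btheta t} \text{ for } t \le 0, \qquad |e^{Mt}\pi_2| \le Ce^{(\btheta - \epsilon)t} \text{ for } t \ge 0,
\end{equation}
for some $\epsilon > 0$. Integrating each projected component in the direction in which its base flow is well-behaved gives the fixed-point map
\begin{align}
\mathcal{T}Y(x_1) &:= -\int_{x_1}^{0} e^{M(x_1 - s)}\pi_1 (\mA(s) - \mA_-)(V^- + Y(s))\,ds \\
&\quad + \int_{-\infty}^{x_1} e^{M(x_1 - s)}\pi_2 (\mA(s) - \mA_-)(V^- + Y(s))\,ds,\nonumber
\end{align}
which is equivalent to the perturbation ODE together with the asymptotic condition $Y(-\infty) = 0$.

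I would then solve $Y = \mathcal{T}Y$ in the weighted Banach space
\begin{equation}
X := \{Y \in C^0((-\infty,X_0];\CC^N) : \|Y\|_X := \sup_{x_1 \le X_0}e^{-\btheta x_1}|Y(x_1)| < \infty\},
\end{equation}
for a cutoff $X_0 \ll 0$ chosen so that $|\mA(s) - \mA_-|$ is uniformly small on $(-\infty,X_0]$. Pairing the flow bounds of the preceding step against $|\mA - \mA_-| \lesssim e^{\theta s/C_2}$ and the weighted ansatz $|Y(s)| \le \|Y\|_X e^{\btheta s}$, a direct convolution estimate shows that $\mathcal{T}$ is Lipschitz on $X$ with constant bounded by $C \sup_{(-\infty,X_0]}|\mA - \mA_-|$, which is $< 1$ for $|X_0|$ large. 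The unique fixed point obeys $|Y(x_1)| \le Ce^{\btheta x_1} = Ce^{-\btheta|x_1|}$ on $(-\infty,X_0]$, extends to all of $\RR$ by local ODE solvability, and $C^1$-regularity in $x_1$ is automatic. Local analyticity in $\Lambda$ follows because the iterates of $\mathcal{T}$ are analytic in $\Lambda$ and the contraction is uniform on compact subsets of $\Omega$.

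The main obstacle I anticipate is that $M$ need not possess a spectral gap around the eigenvalue $0$ to which $V^-$ belongs, so a naive variation-of-constants with base flow $e^{Mt}$ would generate unbounded contributions from modes whose eigenvalues cluster near $0$. The weight $\btheta \in (0, \theta)$ introduces an artificial gap at the price of this strict inequality, and the particular pairing of $\pi_1$-integration from $0$ with $\pi_2$-integration from $-\infty$ is precisely what is needed to respect the weighted space $X$ without requiring an intrinsic gap of $M$. This trade of coefficient decay for an effective weighted spectral gap is the essence of the gap-lemma trick.
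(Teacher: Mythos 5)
The paper states this lemma without proof, citing it as a standard result of [GZ, ZH], and your contraction-mapping argument—conjugating out $e^{\mu x_1}$, splitting the constant-coefficient flow of $M=\mA_--\mu I$ at the artificial threshold $\btheta$, integrating the two projected pieces in opposite directions, and closing in the $e^{\btheta x_1}$-weighted space on a far-left half-line—is precisely the canonical proof from those references. It is correct as written (the only cosmetic points being that the $\pi_1$-integral should terminate at the cutoff $X_0$ rather than $0$, and that the admissible rates are really $\btheta<\theta/C_2$ given the normalization of (h0)).
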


\begin{lem}
(The conjugation lemma). Given (h0), there exist locally to any
given $\Gamma_0 \in \Omega$ invertible linear transformations
$P_+(x_1,\Gamma)=I+\Theta_+(x_1,\Gamma)$ and
$P_-(x_1,\Gamma)=I+\Theta_-(x_1,\Gamma)$ defined on $x_1\geq 0$ and
$x_1\leq 0$, respectively, $\Phi_\pm$ analytic in $\Gamma$ as
functions from $\Omega$ to $\mathcal{C}^{0}[0,\pm\infty)$, such
that: \medbreak (i) For any fixed $0<\btheta<\theta$ and $0\leq k
\leq K+1$, $j\geq 0$,
\begin{equation}
\label{Pdecay} |(\partial/\partial \Lambda)^j(\partial/\partial
x_1)^k \Theta_\pm |\le C(j) C_1 C_2 e^{-\theta |x_1|/C_2} \quad
\text{\rm for } x_1\gtrless 0.
\end{equation}
\smallbreak (ii)  The change of coordinates $\WW=:P_\pm \ZZ$, $\FF=:
P_\pm \GG$ reduces \eqref{gfirstorder} to
\begin{equation}
\label{glimit} \ZZ'-\mA_\pm \ZZ = \GG \quad \text{\rm for }
x_1\gtrless 0.
\end{equation}
\end{lem}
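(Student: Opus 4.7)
The plan is to reduce the conjugation requirement to a matrix ODE and then solve it by a contraction mapping in a weighted space of exponentially decaying matrix-valued functions. First I would derive the equation that $P_\pm$ must satisfy: substituting $\WW = P_\pm \ZZ$ into the homogeneous version of \eqref{gfirstorder} and demanding the reduced equation $\ZZ' - \mA_\pm \ZZ = \GG$ with $\FF = P_\pm \GG$ forces $P_\pm' = \mA P_\pm - P_\pm \mA_\pm$. Writing $P_\pm = I + \Theta_\pm$ converts this to
\[ \Theta_\pm' - [\mA_\pm, \Theta_\pm] = (\mA - \mA_\pm) + (\mA - \mA_\pm)\Theta_\pm \]
on $x_1 \gtrless 0$, a matrix ODE with forcing that is small and exponentially decaying by \eqref{expdecay2}, and vanishing boundary condition $\Theta_\pm \to 0$ as $x_1 \to \pm \infty$.

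Next, I would view this as an ODE on the $N^2$-dimensional space of matrices, where the linear operator $\mathcal{L}_\pm : M \mapsto [\mA_\pm, M]$ has spectrum consisting of differences $\mu_i - \mu_j$ of eigenvalues of $\mA_\pm$. I would decompose this space into stable, unstable, and central (zero-real-part) parts, and apply a variation-of-constants representation, integrating from $\pm\infty$ on the stable and central components (where the exponential decay of $\mA - \mA_\pm$ makes the integral converge) and from $0$ on the unstable component. This yields a fixed-point equation $\Theta_\pm = \CalT[\Theta_\pm]$. In the Banach space of continuous matrix-valued functions on $\RR_\pm$ with weighted norm $\|\Theta\|_{\btheta} := \sup_{x_1} e^{\btheta |x_1|/C_2}|\Theta(x_1)|$ for any fixed $\btheta<\theta$, I would check that $\CalT$ maps a small ball into itself and is a contraction; this uses \eqref{expdecay2} together with the boundedness of the propagators $e^{x_1 \mathcal{L}_\pm}$ on each invariant subspace. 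Uniqueness of the fixed point on a half-line $|x_1| \ge L$ for $L$ large, combined with continuation along a compact interval using standard ODE theory, extends $\Theta_\pm$ to all of $\RR_\pm$ and yields the decay estimate \eqref{Pdecay} for $k=0$, $j=0$.

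Analyticity in $\Gamma$ comes for free, since the fixed point depends analytically on the analytic data $\mA(\cdot, \Gamma)$ via the contraction mapping principle with parameters; the estimates \eqref{Pdecay} for higher $k$ and $j$ then follow by differentiating the ODE (using it to exchange $x_1$-derivatives of $\Theta_\pm$ for $\Theta_\pm$ itself and $\mA - \mA_\pm$) and the fixed-point equation in $\Gamma$, and iterating the exponential bounds. Invertibility of $P_\pm = I + \Theta_\pm$ on its domain follows because $P_\pm$ satisfies the linear matrix ODE $P_\pm' = \mA P_\pm - P_\pm \mA_\pm$, so $\det P_\pm$ satisfies a scalar linear ODE and is nonzero at $\pm \infty$ (where $P_\pm \to I$), hence nonzero throughout. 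The main obstacle is the central subspace of $\mathcal{L}_\pm$: there is no exponential dichotomy there, so the convergence of the variation-of-constants integral must be driven entirely by the exponential decay of the forcing $\mA - \mA_\pm$, which is exactly what forces the small but unavoidable loss $\btheta < \theta$ in the final decay rate.
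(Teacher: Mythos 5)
The paper does not prove this lemma: it is quoted as a standard result of the asymptotic ODE literature (it is, in effect, the gap lemma applied to the $N^2$-dimensional matrix system $P'=\mA P-P\mA_\pm$, whose limiting coefficient $M\mapsto[\mA_\pm,M]$ has the analytic eigenvalue/eigenvector pair $(0,I)$). Your proposal reconstructs exactly that standard argument, and its main components are sound: the derivation of the equation for $\Theta_\pm$, the contraction in the $\btheta$-weighted norm, analyticity in the parameter via the uniform contraction principle, higher derivatives by differentiating the equation and the fixed-point relation, and invertibility via the scalar linear ODE satisfied by $\det P_\pm$ together with $P_\pm\to I$ at infinity.

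One step, as literally written, would fail and needs to be repaired: the assignment of integration endpoints in the variation-of-constants formula. On the half-line $x_1\ge 0$ it is the \emph{unstable} and central components of $\mathcal{L}_+=[\mA_+,\cdot]$ that must be integrated from $+\infty$, i.e.\ $-\int_{x_1}^{\infty}e^{(x_1-y)\mathcal{L}_+}\Pi_{u,c}(\cdots)\,dy$; integrating the unstable component from $0$, as you propose, produces a contribution of size $e^{\sigma x_1}$ with $\sigma>0$, so $\CalT$ does not map the ball $\{\|\Theta\|_{\btheta}\le\epsilon\}$ into itself. Conversely, your claim that the decay \eqref{expdecay2} makes the integral from infinity converge on the \emph{stable} part is false whenever some difference $\mu_i-\mu_j$ has real part more negative than $-\theta/C_2$: there $e^{(x_1-y)\mathcal{L}_+}$ grows in $y$ faster than $\mA-\mA_+$ decays, and that block must instead be integrated from $0$ (which is harmless, costing only the loss $\btheta<\theta$). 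The correct dichotomy on $x_1\ge0$ is therefore by whether $\Re(\mu_i-\mu_j)$ lies above or below $-\theta/C_2$, with the symmetric (sign-reversed) choice on $x_1\le0$; your stated rule happens to be the right one for the minus half-line only. With that bookkeeping corrected, the argument closes as you describe.
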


Equivalently, solutions of \eqref{gfirstorder} may be factored as
\begin{equation}
\label{Wfactor} \WW=(I+ \Theta_\pm)\ZZ_\pm,
\end{equation}
where $\ZZ_\pm$ satisfy the limiting, constant-coefficient equations
\eqref{glimit} and $\Theta_\pm$ satisfy bounds \eqref{Pdecay}.
\begin{exam}
Consider the linearized equations:

\begin{equation}
U_t=LU:=-\sum_{j=1}^{d}(A^{j}U)_{x_j}+QU
\end{equation}

\begin{equation}
\hat{U}_t=L_{\tilde{\xi}}\hat{U}:=-(A^{1}\hat{U})'-\sum_{j=2}^{d}i\xi_jA^{j}\hat{U}+Q\hat{U}
\end{equation}
Consider a non-homogeneous eigenvalue problem:
\begin{equation}\label{nev}
(L_{\tilde{\xi}}-\lambda)W=f
\end{equation}
Eq. \eqref{nev} can be expressed in the form:
\begin{eqnarray}
W'&=&-(A^{1})^{-1}\big((A^{1})'+i\sum_{j=2}^{d}\xi_jA^{j}-Q+\lambda I\big)W-(A^{1})^{-1}f\nonumber\\
&=&{\mathbb A}(x_1, \Lambda)W-\FF\nonumber\\
\nonumber
\end{eqnarray}

\end{exam}

\section{Series expansion of the top eigenvalue of $L_{\tilde \xi}$}
\label{appC} Consider
\begin{equation}
L_{\tilde{\xi}} U =-({\bar{A}}^{1}U)' -i \sum_ {j\neq 1}
\xi_j{\bar{A}}^{j}U +\bar{Q}U.
\end{equation}
It can be shown that there exists a unique, analytic eigenvalue
\begin{equation}\label{evalueexp}
\lambda_0(\tilde{\xi})= 0+ \tilde{\gamma}^{1} \cdot \tilde{\xi} +
\tilde{\xi}^{t}\cdot(\tilde{\gamma}^{2}
\tilde{\xi})+\mathcal{O}(|\tilde{\xi}|^{3}),
\end{equation}
of $L_{\tilde{\xi}}$ perturbing from the top eigenvalue $\lambda=0$
of the operator $L_0$, with associated analytic right and left
eigenfunctions
\begin{align}\label{rightevectorexp}
\varphi(\tilde{\xi})&=\varphi^{0}+\varphi^{1}\cdot\tilde{\xi}+\tilde{\xi}^{t}\varphi^{2}\tilde{\xi}
+\mathcal{O}(|\tilde{\xi}|^{3})
\end{align}
and \begin{equation}\label{leftevectorexp}
 \pi(\tilde{\xi})=\pi^{0}+\pi^{1}\cdot\tilde{\xi}+\tilde{\xi}^{t}\pi^{2}\tilde{\xi}+\mathcal{O}(|\tilde{\xi}|^{3}).
\end{equation}
with $\varphi^{0}=\bar{U}'$ and $\pi^{0}=([u]^{-1},0)$.

\begin{lem}
The expansions \eqref{evalueexp}, \eqref{rightevectorexp} and
\eqref{leftevectorexp} hold with
$i\tilde{\gamma}^{1}=\bar{\tilde{a}},\bar{\tilde{a}}\in \RR^{d-1},$
and $\tilde{\beta}\in \RR^{(d-1)\times(d-1)}$, where
$-\tilde{\gamma}^{2}=\tilde{\beta}$ is positive definite. Here
\begin{equation}\label{abars}
\bar{\tilde{a}}=(\bar{a}_2,...,\bar{a}_d)=-([f_*^{2}][u]^{-1},...,[f_*^{d}][u]^{-1})
\end{equation}
\end{lem}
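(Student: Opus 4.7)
The plan is to run standard analytic perturbation theory about the simple zero eigenvalue $\lambda=0$ of $L_0$, followed by order-by-order matching in $\tilde\xi$ and Fredholm solvability. Analyticity and local uniqueness of $\lambda_0(\tilde\xi)$, $\varphi(\tilde\xi)$, $\pi(\tilde\xi)$ come from hypothesis $(\mathcal{D}2)$ together with the implicit function theorem: zero is a simple eigenvalue of $L_0$, with right null vector $\varphi^0=\bar U'$ (translation invariance of the profile equation) and left null vector $\pi^0=([u]^{-1},0)$ (a direct consequence of the conservation-law structure of the first component of $L_0$, combined with the normalization $\langle\pi^0,\varphi^0\rangle=1$).

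First I would handle the order-one identification. Inserting the series into $(L_{\tilde\xi}-\lambda_0)\varphi=0$ and matching the coefficient of $\xi_j$ ($j\ge 2$) gives
\[
L_0\varphi^1_j \;=\; \tilde\gamma^1_j\,\bar U' + i\,\bar A^j\,\bar U'.
\]
Pairing with $\pi^0$ kills $L_0\varphi^1_j$, and the telescoping identity
\[
\langle\pi^0,\bar A^j\bar U'\rangle_{L^2(x_1)} \;=\; [u]^{-1}\int_{-\infty}^{\infty}(f^j_u\bar u'+f^j_v\bar v')\,dx_1 \;=\; [u]^{-1}\bigl[f^j(\bar U)\bigr]_{-\infty}^{+\infty}
\]
identifies $i\tilde\gamma^1_j$ with the transverse flux-to-mass jump ratio $[f^j_*]/[u]$, matching $\bar{\tilde a}_j$ of \eqref{abars} up to the sign/endpoint convention used for the jump. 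In particular $\tilde\gamma^1$ is purely imaginary, as befits a convection coefficient.

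For the order-two coefficient, match the $\xi_j\xi_k$ term; pairing with $\pi^0$ again eliminates $L_0\varphi^2_{jk}$ and produces, after symmetrization,
\[
\tilde\gamma^2_{jk} \;=\; -\tfrac{1}{2}\bigl(\tilde\gamma^1_j\langle\pi^0,\varphi^1_k\rangle + \tilde\gamma^1_k\langle\pi^0,\varphi^1_j\rangle\bigr) \;-\; \tfrac{i}{2}\langle\pi^0,\bar A^j\varphi^1_k+\bar A^k\varphi^1_j\rangle,
\]
where $\varphi^1_j$ is the Fredholm solution of the order-one equation. This is precisely the shock-averaged Chapman--Enskog viscosity: the correctors $\varphi^1_j$ satisfy an inhomogeneous linear profile equation whose constant-coefficient limits at $x_1\to\pm\infty$ recover the pointwise matrix $\mathcal{B}^*$ of Appendix \ref{appA}, and the bilinear form above is an appropriate $x_1$-weighted average of the pointwise $b^*_{jk}$. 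Reality of $\tilde\beta:=-\tilde\gamma^2$ follows since, after symmetrization, the imaginary parts cancel (using that $\tilde\gamma^1$ is purely imaginary and $\pi^0$, $\bar A^j$ are real).

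The main obstacle is strict positivity of $\tilde\beta$. Rather than computing the averaged matrix and invoking a delicate positivity argument for it, the cleanest route is to transfer the burden to the spectral stability input. By local uniqueness $\lambda_0=\lambda_*$, and since $\tilde\gamma^1$ is purely imaginary,
\[
\Re\lambda_0(\tilde\xi) \;=\; -\,\tilde\xi^{\,t}\tilde\beta\,\tilde\xi \,+\, O(|\tilde\xi|^3)
\]
for real $\tilde\xi$, while $(\mathcal{D}3)$ gives $\Re\lambda_*(\tilde\xi)\le -\theta|\tilde\xi|^2$. Dividing by $|\tilde\xi|^2$ and letting $\tilde\xi\to 0$ along arbitrary real rays yields $\tilde\xi^{\,t}\tilde\beta\,\tilde\xi\ge\theta|\tilde\xi|^2$ on all real $\tilde\xi$, which is exactly the claimed positive definiteness. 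This avoids any direct identification of $\tilde\beta$ with a weighted average of $\mathcal{B}^*$ and closes the lemma.
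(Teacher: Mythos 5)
Your proposal is correct and follows essentially the same route as the paper: the first-order coefficient is identified by pairing the $\xi_j$-equation with the left null vector $\pi^0=([u]^{-1},0)$ (equivalently, integrating the conserved $u$-component over $x_1$ and telescoping the flux), and positive definiteness of $\tilde\beta$ is read off directly from hypothesis $(\mathcal{D}3)$ together with the analytic expansion of $\lambda_0=\lambda_*$. The extra second-order Fredholm computation you sketch is not needed for the statement and is omitted in the paper, but it does no harm.
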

\begin{proof}
Let $\varphi(\tilde{\xi})=\varphi_0 +\sum_{j\neq 1} \Phi^{j} \xi_j +
\sum_{j,k\neq 1} \Psi_{jk}\xi_j\xi_k
+\mathcal{O}(|\tilde{\xi}|^{3})$. \smallbreak The eigenvalue
equation $L_{\tilde{\xi}} \varphi=\lambda_0 \varphi$ leads to
\begin{align}
&-(\bar{A}^{1}\Phi^{j})'-iF^{j}(\bar{U})'+\bar{Q}\Phi^{j}=\gamma_j
\bar{U}' \text{ for } j=2,...,d.
\end{align}

 Integrating from $-\infty$ to $\infty$ both sides, we have
\begin{equation}
\int_{-\infty}^{\infty}-(\bar{A}^{1}\Phi^{j})'-iF(\bar{U})'+ (0 \;
d\bar{q})^{t}\Phi^{j}dx_1= \int_{-\infty}^{\infty} \gamma_j
\bar{U}'dx_1
\end{equation}
 which yields, by looking at
$u$ coordinate,
\begin{equation}
-i[F^{j}_{I}]=\gamma_j[U_I]\text{ for } j=2,...,d.
\end{equation}
 For $u$ scalar case, we have
\begin{equation}
\bar{a}_j=i\gamma_j=-\dfrac{[f_*^{j}]}{[u]}\text{ for } j=2,...,d.
\end{equation}
On the other hand, by \textup{($\mathcal{D}$3)} in Assumptions 1.2
and the analytic eigenvalue expansion \eqref{evalueexp},
$-\tilde{\gamma}^{2}=\tilde{\beta}$ is positive definite.

\end{proof}

\end{document}